\newcommand{\C}{\mathscr{C}}
\newcommand{\D}{\mathscr{D}}
\newcommand{\HH}{\mathbb{H}}
\newcommand{\V}{\mathscr{V}}
\newcommand{\HHH}{\mathscr{H}}
\newcommand{\N}{\mathbb{N}}
\newcommand{\K}{\mathbb{K}}
\newcommand{\R}{\mathbb{R}}
\newcommand{\X}{\mathbb{X}}
\newcommand{\Y}{\mathscr{Y}}
\newcommand{\A}{\mathcal{A}}
\newcommand{\bs}{\boldsymbol}
\newcommand{\eps}{\varepsilon}
\newcommand{\Rot}{\nabla\!\times\!}
\newcommand{\lraup}{\relbar\joinrel\rightharpoonup}
\newtheorem{thm}{Theorem}[section]
\newtheorem{cond}{Assumption}[section]
\newtheorem{rem}{Remark}[section]
\newtheorem{lemma}{Lemma}[section]
\newtheorem{prop}{Proposition}[section]
\newtheorem{coro}{Corollary}[section]
\begin{document}

\title[]{Evolutionary quasi-variational and variational inequalities with constraints on the derivatives} 
\author[F. Miranda]{Fernando Miranda}
\address{CMAT -- Departamento de Matemática, Escola de Ciências, Universidade do Minho, Campus de Gualtar, 4710-057 Braga, Portugal}
\email{fmiranda@math.uminho.pt}

\author[J.F. Rodrigues]{Jos\'e Francisco Rodrigues}
\address{CMAFcIO -- Departamento de Matemática, Faculdade de Ciências, Universidade de Lisboa
P-1749-016 Lisboa, Portugal}
\email{jfrodrigues@ciencias.ulisboa.pt}

\author[L. Santos]{Lisa Santos} 
\address{CMAT and Departamento de Matemática, Escola de Ciências, Universidade do Minho, Campus de Gualtar, 4710-057 Braga, Portugal}
\email{lisa@math.uminho.pt}

\begin{abstract}
This paper considers a general framework for the study of the existence of
quasi-variational and variational solutions to a class of nonlinear evolution
systems in convex sets of Banach spaces describing constraints on a
linear combination of partial derivatives of the solutions. The quasi-linear operators are
of monotone type, but are not required to be coercive for the existence of weak solutions, which
is obtained by a double penalisation/regularisation for the approximation of the solutions. In
the case of time-dependent convex sets that are independent of the solution, we show
also the uniqueness and the continuous dependence of the strong solutions of the variational
inequalities, extending previous results to a more general framework.
\end{abstract}

\maketitle

\section{Introduction}

While variational inequalities where introduced in 1964, by Fichera and Stampacchia in the framework of minimisation problems with obstacle constraints, the first evolutionary variational inequality was solved in the seminal paper of Lions and Stampacchia \cite{LiSt1967}, which was followed by many other works, including the extension to pseudo-monotone operators by Brézis in 1968 \cite{Bre1968} (see also \cite{Lions1969} or \cite{Roubicek2013}). 
Quasi-variational inequalities were introduced later by Bensoussan and Lions in 1973 to describe impulse control problems  \cite{BenLi1982} and were developed for several other mathematical models with free boundaries (see, for instance \cite{MignotPuel1977} and \cite{BaiCa1984}), mainly as implicit unilateral problems of obstacle type, in which the constraints  depend on the solution. 

The first physical models with gradient constraints formulated with quasi-variational inequalities of evolution type were proposed by Prighozhin, in \cite{Pri1996-1} and \cite{Pri1996-2}, respectively, for the sandpile growth and for the magnetization of type-II superconductors. 
This last model has motivated a first existence result for stationary problems in \cite{KunRod2000}, including other applications in elastoplasticity and in electrostatics, and, in \cite{RodriguesSantos2000}, in the parabolic framework for the $p$-Laplacian with an implicit gradient constraint, which was later extended to quasi-variational solutions for first order quasilinear equations in \cite{RodSan2012}, always in the scalar cases. 

In this work we consider weak solutions $\bs u=\bs u(x,t)$ to a class of quasi-variational inequalities associated with evolution equations or systems of the type 
\begin{equation}\label{L*}
\partial_t\bs u + L^* \bs a(L\bs u)+\bs b(\bs u)= \bs f
\end{equation}
formally in the unsaturated region of the scalar constraint 
\begin{equation*} 
|L\bs u|\le G[\bs u]
\end{equation*}
i.e., in the domain $\{(x,t):|L\bs u(x,t)| < G[\bs u](x,t)\}$, with a nonlocal positive and compact operator $G$, where $\partial_t\bs u$ denotes the partial time derivative, $L$ is a linear partial differential operator in $x$ with bounded coefficients and $L^*$ is its formal dual.
Here the monotone vector fields $\bs a$ and $\bs b$ are of power type growth, and the boundary value problems may be coercive or not. However in the region $\{(x,t):|L\bs u(x,t)|=G[\bs u](x,t)\}$ the equation~\eqref{L*}, in general, does not hold unless an extra term is added, raising interesting open questions. The general form of $L$ covers, in particular, the gradient, the Laplacian and higher order operators, the curl, the symmetric part of the Jacobian or classes of smooth vector fields, such as those of H\"ormander type. 
Weak quasi-variational solutions, which in general are non unique and do not have the time derivative in the dual space of the solution, are obtained by the passages to the limit of two vanishing parameters, one for an appropriate approximation/penalisation of the constraint on $L\bs u$ and a second one for a coercive regularisation, as in \cite{RodSan2012}. 
This method allows the application of the Schauder fixed point theorem to a general regularised two parameters variational equation of the type \eqref{L*} and extends considerably the work \cite{AzevedoSantos2010}.

When the constraint $G$, which may depend on time and space, is independent of the solution, i.e. $G=g(x,t)$, the problem becomes a variational one with the solution belonging to a time dependent convex set of a suitable Banach space. 
In this case, if the vector fields $\bs a$ and $\bs b$ are monotone, there exists uniqueness of the weak solution.
Under additional assumptions on the data, we show the existence, uniqueness and continuous dependence of the stronger solution of the corresponding evolution variational inequality, when the time derivative is actually a $L^2$ function. 
Here our method is adapted to gradient type constraints and it develops and extends the  pioneer work of \cite{San1991}, which was continued in \cite{San2002}, extended to a $p$-curl system in \cite{MirandaRodriguesSantos2012}, and to thick flows by \cite{Rodrigues2014} (see also \cite{MirandaRodrigues2016}). Although variational inequalities with time dependent convex sets have been studied in several works (see, for instance \cite{Kenmochi1981,Kubo2017} and their references), for the case of a convex with gradient constraint, only a few results have been stated, namely in \cite{Biroli1974}, as an application of abstract theorems, which assumptions are difficult to verify and, in general, require stronger hypothesis.

Recently, other approaches to evolutionary quasi-variational problems with gradient constraint have been developed by Kenmochi and co-workers in \cite{KanMurKen2009}, \cite{FuKen2013}, \cite{Ken2013} and  \cite{KenNie2016}, using variational evolution inclusions in Hilbert spaces with subdifferentials with a non-local dependence on parameters, and by Hinterm\"uller and Rautenberg in  \cite{HinRau2013}, using the pseudo-monotonicity and the $C_0$-semigroup approach of Brézis-Lions, and in \cite{HinRau2017}, using contractive iteration arguments that yield uniqueness results and numerical approximation schemes in interesting but special situations.
Although the elegant and abstract approach of \cite{HinRau2013} yields the existence of weak quasi-variational solutions under general stability conditions of Mosco type and a general scheme for the numerical approximation of a solution, the required assumptions for the existence theory are somehow more restrictive  than ours, in particular, in what concerns the required strong coercive condition. Other recent results on evolutionary quasi-variational inequalities can be found in \cite{Stefanelli2006} and \cite{Kubo2017}, both in more abstract frameworks and oriented to unilateral type problems and, therefore, with limited interest to constraints on the derivatives of the solutions. Recently, in \cite{HinRauStr2018}, a semidiscretization in time, with monotone non-decreasing data, was used to obtain non-decreasing in time solutions to quasi-variational inequalities with gradient constraints, including an interesting numerical scheme.

This paper is organised as follows: in Section 2 we state our framework and the main results on the existence of weak quasi-variational solutions and on the well-posedness of the strong variational solutions; in Section 3 we illustrate the nonlocal constraint operator $G$ and the linear partial differential operator $L$ with several examples of applications; Section 4 deals with the approximated problem and {\em a priori} estimates; the proof of the existence of the weak quasi-variational solutions is given in Section 5 and, finally in Section 6 we show the uniqueness and the continuous dependence on the data in the variational inequality case.

\section{Assumptions and main results}

Let $\Omega$ be a bounded open subset of $\R^d$, with a Lipschitz boundary, $d\geq2$ and for $t\in(0,T]$ we  denote $Q_t=\Omega\times(0,t)$. 
For a real vector function $\bs u=\bs u(x,t)=(u_1,\ldots,u_m)$, $(x,t)\in Q_T$, and a multi-index $\alpha=(\alpha_1,\ldots,\alpha_d)$, with $\alpha_1,\ldots,\alpha_d\in\N_0$ and $\alpha_1+\cdots+\alpha_d=|\alpha|$, we denote $\partial^\alpha u_i = \frac{\partial^{|\alpha|}u_i}{\partial{x_1^{\alpha_1}}\cdots\partial x_d^{\alpha_d}}$ the partial derivatives of $u_i$. 
Given real numbers $a,b$ we set $a\vee b=\max\{a,b\}$ and $a\wedge b=\min\{a,b\}$.

We introduce now several assumptions which will be important to set the functional framework of our problem.

\begin{cond}\label{op:L}
For $p\in[1,\infty]$, let $L$ be a linear differential operator of order $s\ge1$, given by	
\begin{equation*} 
L:\bs V_p\to L^p(\Omega)^\ell \text{ such that }
\bs V_p=\{\bs u\in L^p(\Omega)^m:L\bs u\in L^p(\Omega)^\ell \}
\end{equation*}
is endowed with the graph norm, $\ell, m\in\N$.
\end{cond}

In general, the operator $L$ can have the form 
\begin{equation*}
(L\bs u)_j=\sum_{|\alpha|\leq s}\sum_{k=1}^m\lambda_{\alpha,k}^j \partial^\alpha u_k,
\end{equation*}
for $j=1,\ldots, \ell,$ $\alpha=(\alpha_1,\dots,\alpha_d)\in \N_0^d$ is a multi-index and  each  $\lambda_{\alpha,k}^j\in L^\infty(\Omega)$ but we shall consider mainly the following four illustrative examples with constant coefficients, although we can consider also their generalisations with variable coefficients, as in the fifth example:
{\it \begin{enumerate}
\item[1.] $L u=\nabla u$ (gradient of $u$; $m=1$, $\ell=d$ );
\item[2.] $L u=\Delta u$ (Laplacian of $u$; $m=\ell=1$);
\item[3.] $L\bs u=\Rot\bs u$ (curl of $\bs u$; $d=m=\ell=3$);
\item[4.] $L\bs u=D\bs u = \frac12(\nabla\bs u + \nabla\bs u^T)$ (symmetric part of the Jacobian of $\bs u$; $d=m$, $\ell=m^2$);
\item[5.] $\bs{L} u=(X_1 u,\dots,X_\ell u)$, where $X_j=\displaystyle\sum_{i=1}^d \alpha_{ji}\tfrac{\partial\ }{\partial{x_i}}$, where  $\alpha_{ji}$ are appropriate scalar real functions (subelliptic gradient of $u$; $m=1$, $1\le j \le \ell$,  $1\le i \le d$).
\end{enumerate}}
	
\begin{cond}\label{op:a}  
Let $\bs a:Q_T\times\R^\ell\rightarrow\R^\ell$ and $\bs b:Q_T\times\R^m\rightarrow\R^m$ be Carath\'eodory functions, i.e.,  they are measurable functions in  the variables $(x,t)$, for all $\bs \xi\in\R^\ell$ and $\bs\eta\in\R^m$, respectively,  and they are continuous in the variables $\bs \xi\in\R^\ell$ and $\bs\eta\in\R^m$, for a.e. $(x,t)\in Q_T$.
Suppose, additionally, that $\bs a$ and $\bs b$ satisfy the following structural conditions: for all $\bs\xi,\bs\xi'\in\R^\ell$ and $\bs \eta,\bs\eta'\in\R^m$ and a.e. $(x,t)\in Q_T$, 
\begin{subequations} 
\begin{align}
\label{zero}
&|\bs a(x,t,\bs \xi)|\leq a^{*}|\bs \xi|^{p-1},\\
\label{op:a:prop:c}
&(\bs a(x,t,\bs\xi)-\bs a(x,t,\bs\xi'))\cdot(\bs\xi-\bs\xi')\ge0,\quad\quad &\\
\label{op:a:prop:b}
&|\bs b(x,t,\bs \eta)|\leq b^{*}|\bs \eta|^{(p-1)\vee 1},\\
\label{op:c:prop:c}
&(\bs b(x,t,\bs\eta)-\bs b(x,t,\bs\eta'))\cdot(\bs\eta-\bs\eta')\ge0,
\end{align}
where  $a^*$  and $b^*$ are positive constants and $1<p<\infty$.
\end{subequations}
\end{cond}

\begin{cond}\label{Xp}
For a given $p\in(1,\infty)$, we work with a closed subspace $\X_p$ of $\bs V_p$ such that $\X_p\subset L^2(\Omega)^m$ and  $\|\bs v\|_{\X_p}:=\|L\bs v\|_{L^p(\Omega)^\ell } $  is a norm in $\X_p$ equivalent to the norm induced from $\bs V_p$.  
\end{cond}
	
\begin{rem}
For simplicity, in this work we consider a functional framework where we suppose valid the Poincar\'e and Sobolev type inequalities, as in the Dirichlet problems of the five examples.
However our approach is still valid for more general frameworks to include Neumann and mixed type boundary conditions. 
\end{rem}
	
\begin{cond} \label{gelfand}
There exists a Hilbert subspace $\HH$ of $ L^2(\Omega)^m$  such that $\big(\X_p, \HH, \X'_p\big)$ is a Gelfand triple and the inclusion of $\X_p$ into $\HH$ is compact for the given $p$, $1<p<\infty$.
\end{cond}
	
From now on, we denote
\begin{equation*} 
\V_p=L^p\big(0,T;\X_p\big), \quad \HHH=L^p(0,T;\HH), \quad 1\leq p\leq\infty,
\end{equation*}
and we observe that  $L^{p'}\big(0,T; \X'_p\big)=\V'_p$, with $p'=\frac{p}{p-1}$ for $1<p<\infty$.

By well known embedding theorems on Sobolev-Bochner spaces (see Chapter 7 of \cite{Roubicek2013}, for instance), we have 
\begin{equation}\label{embed}
\Y_p=\big\{\bs v\in\V_p:\partial_t\bs v\in\V_p'\big\}\subset\C\big([0,T];\HH\big)
\end{equation}
and Assumption~\ref{gelfand} implies, by Aubin-Lions lemma, that the embedding $\Y_p\hookrightarrow\HHH$ is also compact for $1<p<\infty$.

\begin{cond}\label{Phi}
We consider  a nonlinear continuous functional $G:\HHH\rightarrow L^1(Q_T)$, such that its restriction to $\V_p$ is compact with values in $\C\big([0,T];L^\infty(\Omega)\big)$, i.e., $G:\V_p\rightarrow \C\big([0,T];L^\infty(\Omega)\big)$ is compact.
In addition, we assume  
\begin{equation*}
0<g_*\le G[\bs u](x,t)\le g^*\text{ for  all }\bs u\in\V_p, \text{ for all } t\in [0,T]\text{ and a.e. } x\in\Omega
\end{equation*}
for given constants $g_*$ and $g^*$. 
\end{cond}
Since $G$ is compact in $\V_p$, in particular, for any sequence $\{\bs v_n\}_n$ weakly convergent to $\bs v$ in $\V_p$, there exists a subsequence, still denoted by $\{\bs v_n\}_n$, such that $\{G[\bs v_n]\}_n$ converges uniformly to $G[\bs v]$ in $\C\big([0,T];L^\infty(\Omega)\big)$.

For $\bs v\in\V_p$ and a.e. $t\in(0,T)$ we define the nonempty convex set for $G[\bs v](t)\in L^\infty(\Omega)$
\begin{equation}\label{kapa}
\K_{G[\bs v](t)}=\big\{\bs w\in\X_p:|L\bs w|\le G[\bs v](t)\big\},
\end{equation}
where $|\cdot|$ is the euclidean norm in $\R^\ell $ and we denote $\bs w\in\K_{G[\bs v]}$ iff $\bs w(t)\in\K_{G[\bs v](t)}$ for a.e. $t\in(0,T)$.

For $1\le p< \infty$, we denote the duality pairing between $\X'_p$ and $\X_p$  by  $\langle\,\cdot\,,\,\cdot\,\rangle_p$ and we consider the quasi-variational inequality associated with \eqref{L*} and \eqref{kapa}. Find $\bs u\in \V_p$ satisfying
\begin{equation}\label{iqv}
\left\{\begin{array}{l}
\bs u\in\K_{G[\bs u]},\vspace{3mm}\\
\displaystyle\int_0^T\langle\partial_t\bs v,\bs v-\bs u\rangle_p+\int_{Q_T} \bs a(L\bs u)\cdot L(\bs v-\bs u)
+\int_{Q_T}\bs b(\bs u)\cdot(\bs v -\bs u)
\vspace{3mm}\\
\hfill{\ge\displaystyle\int_{Q_T}\bs f\cdot(\bs v-\bs u)-\frac12\int_\Omega|\bs v(0)-\bs u_0|^2,}\vspace{3mm}\\
\hfill{\forall\,\bs v\in \Y_p\text{ such that }
\bs v\in\K_{G[\bs u]}.}
\end{array}
\right.
\end{equation}

\begin{thm}\label{IQV0}
Suppose that Assumptions \ref{op:L} to \ref{Phi} are satisfied,  $\bs f\in L^2(Q_T)^m$ and $\bs u_0\in \K_{G[\bs u_0]}$. 
Then the quasi-variational inequality \eqref{iqv} has a weak solution
$\bs u\in\V_p\ \cap\  L^\infty\big(0,T;L^2(\Omega)^m\big)$.
\end{thm}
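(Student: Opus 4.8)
The plan is to build a weak quasi-variational solution by freezing the nonlocal dependence, solving a doubly approximated variational equation, and then closing the loop with a fixed point theorem before removing the approximation. Fixing $\bs w\in\HHH$, I replace the solution-dependent bound in \eqref{kapa} by the datum $G[\bs w]$ and introduce two positive parameters: a penalisation of the constraint, adding to the operator in \eqref{L*} a monotone term of the form $\tfrac1\eps\,k\big(|L\bs u|^2-G[\bs w]^2\big)L\bs u$, where $k$ is non-decreasing and vanishes on $(-\infty,0]$, and a coercive regularisation $\delta\,L^*\big(|L\bs u|^{q-2}L\bs u\big)$ with $q\ge\max\{p,2\}$. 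For each fixed $\eps,\delta>0$ the resulting operator is bounded, monotone and coercive on $\X_p$, so the classical theory of monotone evolution equations in the Gelfand triple $(\X_p,\HH,\X_p')$ of Assumption~\ref{gelfand} yields a unique solution $\bs u_{\eps\delta}=\bs u_{\eps\delta}[\bs w]\in\Y_p$ with initial value $\bs u_0$ and $\partial_t\bs u_{\eps\delta}\in\V_p'$.

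To recover the quasi-variational coupling I would apply the Schauder fixed point theorem to the map $\bs w\mapsto\bs u_{\eps\delta}[\bs w]$. Since its image lies in a bounded subset of $\Y_p$ and the embedding $\Y_p\hookrightarrow\HHH$ is compact by Aubin--Lions, the map is compact from $\HHH$ into itself; continuity follows from the continuity of $G$ in Assumption~\ref{Phi}. A fixed point $\bs u_{\eps\delta}$ then solves the penalised and regularised equation with its own bound $G[\bs u_{\eps\delta}]$ in place of $G[\bs w]$.

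The decisive step is the derivation of \emph{a priori} estimates, which is delicate precisely because the operator need not be coercive. Testing the equation with $\bs u_{\eps\delta}$ and using the monotonicity \eqref{op:a:prop:c}, \eqref{op:c:prop:c} and the growth bounds \eqref{zero}, \eqref{op:a:prop:b}, together with $\bs f\in L^2(Q_T)^m$, I expect to control $\bs u_{\eps\delta}$ in $L^\infty(0,T;\HH)$ and to bound $\delta^{1/q}|L\bs u_{\eps\delta}|$ in $L^q(Q_T)$, the latter forcing the regularising term to vanish in $\V_p'$ as $\delta\to0$. The crucial point is a separate estimate of the penalty mass: since $G\ge g_*>0$, on the set where the constraint is violated one has $|L\bs u_{\eps\delta}|^2\ge g_*^2$, so the energy identity bounds $\tfrac1\eps\int_{Q_T}k\big(|L\bs u_{\eps\delta}|^2-G^2\big)$ uniformly, whence the constraint defect tends to zero as $\eps\to0$. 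This is the heart of the argument and the main obstacle, because in the absence of coercivity the uniform bound on $\bs u_{\eps\delta}$ in $\V_p$ cannot come from the operator $\bs a$; it must be supplied, in the limit, by the constraint itself through $G\le g^*$.

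Finally I would pass to the limit in two stages. Letting $\eps\to0$ with $\delta$ fixed, the vanishing of the penalty defect and the weak closedness of the convex constraint give a limit $\bs u_\delta\in\K_{G[\bs u_\delta]}$; the compactness of $G$ from Assumption~\ref{Phi} identifies $G[\bs u_{\eps\delta}]\to G[\bs u_\delta]$ uniformly, and a Minty-type monotonicity argument based on \eqref{op:a:prop:c} identifies the weak limit of $\bs a(L\bs u_{\eps\delta})$, producing a regularised variational inequality for $\bs u_\delta$. Letting then $\delta\to0$, the exact constraint $|L\bs u_\delta|\le G[\bs u_\delta]\le g^*$ furnishes the uniform bounds in $\V_p\cap L^\infty(0,T;L^2(\Omega)^m)$ directly, the $\delta$-term disappears, and a second Minty argument together with the strong convergence in $\HHH$ yields the limit inequality. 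Testing with an arbitrary $\bs v\in\Y_p$, $\bs v\in\K_{G[\bs u]}$, and transferring the time derivative onto $\bs v$ by integration by parts produces exactly the term $-\tfrac12\int_\Omega|\bs v(0)-\bs u_0|^2$ and accounts for the absence of $\partial_t\bs u$ in $\V_p'$, giving the weak quasi-variational inequality \eqref{iqv} with $\bs u\in\V_p\cap L^\infty(0,T;L^2(\Omega)^m)$.
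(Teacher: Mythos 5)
Your plan coincides in all essentials with the paper's own proof: a double penalisation/regularisation of the frozen problem with constraint $G[\bs w]$, solvability by monotone parabolic theory in the Gelfand triple, a Schauder fixed point using the compactness of $\Y_p\hookrightarrow\HHH$ and of $G$, energy estimates in $L^\infty\big(0,T;L^2(\Omega)^m\big)$ together with the uniform penalty-mass bound obtained from $G\ge g_*>0$, and the two-stage limit $\eps\to0$, $\delta\to0$ in which the uniform $\V_p$ bound is supplied by the recovered constraint $|L\bs u_\delta|\le G[\bs u_\delta]\le g^*$ rather than by coercivity, the limits being identified by pseudo-monotonicity (Minty). Your variants --- penalising $|L\bs u|^2-G^2$ with a $1/\eps$ factor instead of the paper's truncated exponential $k_\eps(|L\bs u|-G)|L\bs u|^{p-2}L\bs u$, regularising with exponent $q\ge\max\{p,2\}$ instead of $p$, and extracting an intermediate inequality for $\bs u_\delta$ instead of the paper's diagonal $\limsup$ argument --- are matters of execution rather than of route; the only tool you leave implicit is the time-regularisation of test functions inside the convex set (the paper's Lemma~\ref{RegSeq}), which is what actually allows the Minty $\limsup$ inequality to be run against the limit $\bs u$, whose time derivative does not belong to $\V_p'$.
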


We note that, by Assumption~\ref{Phi} the solutions have bounded $L\bs u$ but, in general, this may not imply that $\bs u$ is itself bounded.
We also observe that, by insufficient regularity in time, we could not guarantee that the weak solution u satisfies the initial condition in the classical sense, but only in the generalised sense \eqref{iqv} as in \cite{Bre1968} and \cite{Lions1969}.

We consider a positive bounded function $g:Q_T\rightarrow\R^+$ and the special case of the convex set \eqref{kapa}, with 
$$G[\bs v](x,t)=g(x,t)\text{ for a.e. }t\in(0,T).$$
So, 
\begin{equation}\label{kapaIV}
\bs v\in\K_g \text{ iff }\bs v(t)\in\K_{g(t)}=\big\{\bs v\in \X^p:|L\bs v|\le g(t)\big\}\text{ for a.e. }t\in(0,T).
\end{equation}
In this case, the convex being independent of the solution, the problem becomes variational and the weak solution of Theorem \ref{IQV0} is unique by the following theorem.

\begin{thm} \label{th2.2}
The variational inequality \eqref{iqv}  with a fixed convex $\K_g$, as in \eqref{kapaIV}, for a given strictly positive function $g=g(x,t)\in\C\big([0,T];L^\infty(\Omega)\big)$, $\bs u_0\in\K_{g(0)} $ and $\bs f\in L^2(Q_T)^m$, has at most one solution provided $\X_p\subset\HH$ and one of the monotonicity conditions is strict, i.e.
\begin{equation*}
\big(\bs b(x,t,\bs\eta)- \bs b(x,t,\bs\eta')\big)\cdot(\bs\eta-\bs\eta')>0\text{ for }\bs\eta\not=\bs\eta',
\end{equation*}
or
\begin{equation*}
\big(\bs a(x,t,\bs\xi)- \bs a(x,t,\bs\xi')\big)\cdot(\bs\xi-\bs\xi')>0\text{ for }\bs\xi\not=\bs\xi',
\end{equation*}
and Assumption~\ref{Xp} holds.
\end{thm}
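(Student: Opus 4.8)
The plan is to take two solutions $\bs u_1,\bs u_2$ of the variational inequality \eqref{iqv} with the fixed convex $\K_g$ and to prove $\bs u_1=\bs u_2$ by inserting each solution into the inequality satisfied by the other and adding. Two structural difficulties must be confronted at once. First, a solution lies only in $\V_p$, hence is not an admissible test function, since the test functions are required to belong to $\Y_p$; this forces a regularisation. Second, in the weak formulation \eqref{iqv} the time derivative falls on the test function and not on the solution, so a direct comparison produces a boundary term of the \emph{unfavourable} sign. This is the heart of the matter.

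To produce admissible test functions I would exploit that the constraint acts on $L\bs u$ and that $\K_g$ is convex with $g\ge g_*>0$. For $\theta\in(0,1)$ the scaled function $\theta\bs u_i$ satisfies $|L(\theta\bs u_i)|=\theta|L\bs u_i|\le\theta g$, i.e. it sits strictly inside the constraint with margin $(1-\theta)g_*$. Mollifying in time with a nonnegative kernel $\rho_h$ and using $|L(\rho_h\ast\theta\bs u_i)|\le\theta\,\rho_h\ast|L\bs u_i|\le\theta\,\rho_h\ast g$, the uniform continuity of $t\mapsto g(t)$ in $L^\infty(\Omega)$ (from $g\in\C([0,T];L^\infty(\Omega))$) gives $\theta\,\rho_h\ast g\le g$ once $h$ is small relative to $1-\theta$. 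Hence $\bs u_i^{\theta,h}:=\rho_h\ast(\theta\bs u_i)$ belongs to $\Y_p\cap\K_g$ and converges to $\bs u_i$ in $\V_p$ as $h\to0$, $\theta\to1$. Extending $\bs u_i$ by $\bs u_0$ for $t<0$ before mollifying keeps the initial trace compatible with the generalised initial datum and controls the penalty $-\tfrac12\int_\Omega|\bs v(0)-\bs u_0|^2$.

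To overcome the sign difficulty I would test with functions localised in time. Fix $s\in(0,T)$ and, in the inequality for $\bs u_1$, use a test function that equals $\bs u_2^{\theta,h}$ on $[0,s]$, interpolates (as a convex combination, to stay in $\K_g$) from $\bs u_2^{\theta,h}$ to $\bs u_1^{\theta,h}$ across a short window $[s,s+\delta]$, and equals $\bs u_1^{\theta,h}$ afterwards; symmetrically, in the inequality for $\bs u_2$ use the transition from $\bs u_1^{\theta,h}$ to $\bs u_2^{\theta,h}$ at $s$. Adding the two inequalities, the source terms cancel and the spatial terms reduce, on $[0,s]$, to $-\int_{Q_s}(\bs a(L\bs u_1)-\bs a(L\bs u_2))\cdot L(\bs u_1-\bs u_2)-\int_{Q_s}(\bs b(\bs u_1)-\bs b(\bs u_2))\cdot(\bs u_1-\bs u_2)\le0$. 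The time terms split into a bulk part on $[0,s]$, which — after integration by parts in the mollified variables and passage to the limit — contributes $\tfrac12\|(\bs u_1-\bs u_2)(s)\|_\HH^2-\tfrac12\|(\bs u_1-\bs u_2)(0)\|_\HH^2$, and a contribution from each transition window equal to $-\tfrac12\|(\bs u_1-\bs u_2)(s)\|_\HH^2$ in the limit $\delta\to0$. These jump terms are precisely the device that reverses the unfavourable sign: the net time contribution is $-\tfrac12\|(\bs u_1-\bs u_2)(s)\|_\HH^2-\tfrac12\|(\bs u_1-\bs u_2)(0)\|_\HH^2$.

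Collecting everything and using $\bs u_1(0)=\bs u_2(0)=\bs u_0$ in the generalised sense (so the penalty terms and $\|(\bs u_1-\bs u_2)(0)\|_\HH$ vanish), one arrives, for a.e. $s\in(0,T)$, at
\[
\tfrac12\|(\bs u_1-\bs u_2)(s)\|_\HH^2+\int_{Q_s}(\bs a(L\bs u_1)-\bs a(L\bs u_2))\cdot L(\bs u_1-\bs u_2)+\int_{Q_s}(\bs b(\bs u_1)-\bs b(\bs u_2))\cdot(\bs u_1-\bs u_2)\le0 .
\]
Since every term is nonnegative, each vanishes; the strict monotonicity of $\bs a$ or of $\bs b$ then forces $\bs u_1=\bs u_2$. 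The main obstacle, where I expect the real work to lie, is the rigorous treatment of the time terms: giving meaning to the traces $(\bs u_1-\bs u_2)(s)$ and justifying the mollification limit for solutions that have no time derivative in $\V_p'$. This is exactly where the hypothesis $\X_p\subset\HH$ is used, to place $\bs u_i(t)$ in $\HH$ and to legitimise the $\HH$-energy identities and traces on which the argument above relies.
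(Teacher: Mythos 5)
Your regularisation of the test functions (scaling by $\theta$ plus mollification in time) is sound, and the transition--window device on $[s,s+\delta]$ does generate, for a.e.\ $s$ (Lebesgue points of $\bs u_1,\bs u_2$), the jump terms you describe. The gaps are exactly in the two time contributions you do not compute. \emph{(i) The initial layer.} You assert that $\bs u_1(0)=\bs u_2(0)=\bs u_0$ ``in the generalised sense'', so that $\|(\bs u_1-\bs u_2)(0)\|_\HH$ and the penalty terms vanish. But a weak solution of \eqref{iqv} lies only in $\V_p\cap L^\infty\big(0,T;L^2(\Omega)^m\big)$: it has no trace at $t=0$, and the generalised initial condition is nothing more than the presence of the term $-\frac12\int_\Omega|\bs v(0)-\bs u_0|^2$ in \eqref{iqv}; the paper warns of precisely this right after Theorem~\ref{IQV0}. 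What your mollified computation actually produces near $t=0$ is $-\tfrac12\int_\Omega|\bs u_i^{\theta,h}(0)|^2$ together with averages of $\int_\Omega\bs u_0\cdot\bs u_i(t)\,dt$ over $(0,h)$, whose limits are cluster values of $\bs u_i$ near $0^+$, not $\bs u_0$. (This particular gap is repairable: with the extension by $\bs u_0$ and Young's inequality the whole initial package can be shown to be nonpositive --- but that is a different argument from the one you state, and it never produces the identity $(\bs u_1-\bs u_2)(0)=\bs 0$.)

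\emph{(ii) The tail $[s+\delta,T]$, which you discard silently, is where the argument breaks.} There your test function in the inequality for $\bs u_i$ equals $\bs u_i^{\theta,h}$, so the time term contains $\int_{s+\delta}^T\langle\partial_t\bs u_i^{\theta,h},\bs u_i^{\theta,h}-\bs u_i\rangle_p$. This is not $o(1)$: expanding it gives $\tfrac12\int_\Omega|\bs u_i^{\theta,h}(T)|^2$ minus a doubled-in-time integral whose concentration point is the \emph{fixed} endpoint $(T,T)$. Unlike the interior instant $s$, the endpoint $T$ cannot be assumed to be a Lebesgue point of $\bs u_i$; for a function merely in $L^\infty\big(0,T;L^2(\Omega)^m\big)$ the two endpoint averages involved need not have equal limits (think of $\bs u_i$ oscillating as $t\uparrow T$), and the discrepancy enters your estimate with the unfavourable sign. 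Hence the inequality $\tfrac12\|(\bs u_1-\bs u_2)(s)\|_\HH^2+M_s\le0$, with $M_s=\int_{Q_s}(\bs a(L\bs u_1)-\bs a(L\bs u_2))\cdot L(\bs u_1-\bs u_2)+\int_{Q_s}(\bs b(\bs u_1)-\bs b(\bs u_2))\cdot(\bs u_1-\bs u_2)$, is not established. There is also a structural warning you should have noticed: your final display never actually needs strictness --- if it held, its first term alone would force $\bs u_1=\bs u_2$ under mere monotonicity, making the theorem's hypothesis redundant. The paper's proof is designed precisely to avoid recovering that energy term: it tests \emph{both} inequalities with one and the same function, the scaled regularisation $\widehat{\bs w}_n=\rho_n\bs w_n\in\K_g\cap\Y_p$ of the midpoint $\bs w=\tfrac12(\bs u_1+\bs u_2)$ given by Lemma~\ref{RegSeq} with $\bs w_n(0)=\bs u_0$, so the two time terms combine into $2\int_0^T\langle\partial_t\widehat{\bs w}_n,\widehat{\bs w}_n-\bs w\rangle_p$, which is nonpositive in the limit by property iii) of that lemma, while the penalties are exactly $(\rho_n-1)^2\int_\Omega|\bs u_0|^2\to0$; the limit then yields only $\langle\A\bs u_1-\A\bs u_2,\bs u_1-\bs u_2\rangle\le0$ with $\A$ as in \eqref{OpA}, and this is exactly the point where the strict monotonicity of $\bs a$ or $\bs b$ is indispensable.
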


We can now introduce the strong formulation of the corresponding variational inequality.
Find $\bs w \in \V_p\cap H^1\big(0,T;L^2(\Omega)^m\big)$ satisfying, for all $t\in (0,T]$:
\begin{equation}\label{iv}
\left\{\begin{array}{l}
\bs w\in\K_{g},\ \bs w(0)=\bs w_0,\vspace{3mm}\\
\displaystyle\int_{Q_t}\partial_t\bs w\cdot(\bs v-\bs w)+\int_{Q_t} \bs a(L\bs w)\cdot L(\bs v-\bs w)+\int_{Q_t}\bs b(\bs w)\cdot(\bs v -\bs w)
\hspace{2cm}\vspace{3mm}\\
\hfill{\displaystyle\ge\displaystyle\int_{Q_t}\bs f\cdot(\bs v-\bs w)},\quad\forall\,\bs v\in \K_{g}\cap\V_p.
\end{array}
\right.
\end{equation}
We observe that if $\bs w$ is a strong solution to \eqref{iv} it is also a weak solution to \eqref{iqv}.
Indeed, if we take $\bs v\in\Y_p\cap\K_g\subset\C\big(0,T;L^2(\Omega)^m\big)$ in \eqref{iv} with $t=T$, since 
\begin{equation*}
\int_0^T\langle\partial_t\bs w-\partial_t\bs v,\bs v-\bs w\rangle_p=\tfrac12\int_\Omega|\bs v(0)-\bs w_0|^2-\tfrac12\int_\Omega|\bs v(T)-\bs w(T)|^2\leq \tfrac12\int_\Omega|\bs v(0)-\bs w_0|^2,
\end{equation*}
we immediately conclude that $\bs w$ also satisfies \eqref{iqv}.

We consider also a stronger noncoercive framework with a potential vector field  $\bs a$ and a lower order term $\bs b$ with linear growth, by replacing the Assumption~\ref{op:a} by the following.
\begin{cond}\label{cond:bis}
Let $\bs a:Q_T\times\R^\ell\rightarrow\R^\ell$ and $\bs b:Q_T\times\R^m\rightarrow\R^m$ be Carath\'eodory functions, i.e.,  they are measurable functions in  the variables $(x,t)$, for all $\bs \xi\in\R^\ell$ and $\bs\eta\in\R^m$, respectively and they are continuous in the variables $\bs \xi\in\R^\ell$ and $\bs\eta\in\R^m$, respectively, for a.e. $(x,t)\in Q_T$.
Suppose, additionally, that there exists $A:Q_T\times\R^\ell\rightarrow\R$, such that, for all $\bs\xi\in\R^\ell$ and a.e.\ $(x,t)\in\overline Q_T$, $A$ is differentiable in $t$ and in  $\bs\xi$, and 
\begin{subequations} 
\begin{equation}\label{apot}
A=A(x,t,\bs\xi)\text{ is convex in }\bs\xi,\quad \nabla_{\bs\xi}A =\bs a,
\end{equation}
\begin{align}
\label{op:a:bis}
&0\leq A(x,t,\bs\xi)\leq a^* |\bs\xi|^p,\quad |\partial_t A(x,t,\bs\xi)|\leq A_1+A_2|\bs\xi|^p,\\
\intertext{and $\bs b$ satisfies the following structural conditions: for all  $\bs \eta,\bs\eta'\in\R^m$ and a.e. $(x,t)\in Q_T$,}
%
&(\bs b(x,t,\bs\eta)-\bs b(x,t,\bs\eta'))\cdot(\bs\eta-\bs\eta')\ge0,\\
&|\bs b(x,t,\bs \eta)|\leq b^{*}|\bs \eta|,
\label{op:a:prop:b:bis}
\end{align}
where $a^*$, $A_1$, $A_2$ and  $b^*$ are positive constants, $1<p<\infty$.
\end{subequations}
\end{cond}

In the non-coercive case, we have the well-posedeness result on the existence, uniqueness and continuous dependence of the (strong) variational solution \eqref{iv}. Under the additional strong monotonicity assumption, for instance, for operators of $p$-Laplacian type, when $\bs a(\bs\xi)=|\bs \xi|^{p-2}\bs\xi$, the continuous dependence result in the coercive case also holds in the space $\V_p$.

\begin{thm}\label{IV0}
Suppose that Assumptions \ref{op:L}, \ref{cond:bis}, \ref{Xp} and  \ref{gelfand} are satisfied and
\begin{equation}\label{assumptions_iv}
\bs f\in L^2(Q_T)^m,\quad \bs w_0\in \K_{g(0)},\quad g\in W^{1,\infty}\big(0,T;L^\infty(\Omega)\big)\text{ with }g\ge g_*>0.
\end{equation}
Then the variational inequality \eqref{iv} has a unique solution $\bs w\in\V_p\cap H^1\big(0,T;L^2(\Omega)^m\big)$.
\end{thm}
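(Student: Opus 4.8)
The plan is to obtain \eqref{iv} by a double approximation, as announced in the introduction and in the spirit of \cite{San1991} and \cite{RodSan2012}: a penalisation of the constraint $|L\bs w|\le g$ together with a coercive regularisation compensating the possible degeneracy of $\bs a$, followed by a limit procedure in which the potential structure \eqref{apot} is exploited to control $\partial_t\bs w$ in $L^2$. Concretely, for $\eps,\delta>0$ I would replace \eqref{iv} by the quasilinear parabolic equation
\begin{equation*}
\partial_t\bs w+L^*\Big(\bs a(L\bs w)+\delta|L\bs w|^{p-2}L\bs w+\tfrac1\eps\,k_\eps\big(|L\bs w|-g\big)\tfrac{L\bs w}{|L\bs w|}\Big)+\bs b(\bs w)=\bs f,\qquad\bs w(0)=\bs w_0,
\end{equation*}
where $k_\eps\ge0$ is smooth, nondecreasing and vanishes on $(-\infty,0]$, so that the penalty is inactive at $t=0$ thanks to $\bs w_0\in\K_{g(0)}$. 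For fixed $\eps,\delta$ the regularising term $\delta|L\bs w|^{p-2}L\bs w$ renders the spatial operator coercive and strictly monotone on $\X_p$, while the penalty is monotone and bounded and $\bs b$ has linear growth; hence a solution $\bs w_{\eps\delta}\in\Y_p$ exists by the classical theory of monotone parabolic operators (see \cite{Lions1969,Roubicek2013}), conveniently set up through a Galerkin scheme.

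The core of the proof is a set of \emph{a priori} estimates uniform in $\eps,\delta$. Testing with $\bs w_{\eps\delta}$ and using that convexity of $A$ with $A(x,t,\bs 0)=0$ gives $\bs a(L\bs w)\cdot L\bs w\ge A(L\bs w)\ge0$, one obtains, together with the penalisation, a bound for $\bs w_{\eps\delta}$ in $\V_p\cap L^\infty\big(0,T;L^2(\Omega)^m\big)$ and the penalty bound $\tfrac1\eps\int_{Q_T}k_\eps(|L\bs w_{\eps\delta}|-g)\le C$. The decisive estimate is obtained by testing with $\partial_t\bs w_{\eps\delta}$: here $\int_{Q_t}\bs a(L\bs w)\cdot\partial_tL\bs w$ is rewritten through $\nabla_{\bs\xi}A=\bs a$ as $\int_\Omega A(L\bs w)(t)-\int_\Omega A(L\bs w)(0)-\int_{Q_t}\partial_tA$, the last integrand being dominated by $A_1+A_2|L\bs w|^p$ from \eqref{op:a:bis} and hence absorbed by the previous bound; the regularising term yields a nonnegative total time derivative; and the penalty term is written as the time derivative of a convex penalty potential minus a contribution proportional to $\partial_tg$. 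This is exactly the step where the hypotheses $g\in W^{1,\infty}\big(0,T;L^\infty(\Omega)\big)$ and $\bs w_0\in\K_{g(0)}$ are indispensable, the former to bound the $\partial_tg$ term and the latter to annihilate the boundary term at $t=0$. Estimating $\int_{Q_t}\bs f\cdot\partial_t\bs w$ and the $\bs b$ term by Young's inequality then gives $\|\partial_t\bs w_{\eps\delta}\|_{L^2(Q_T)}\le C$, uniformly in $\eps,\delta$. I expect this $L^2$ bound on the time derivative to be the main obstacle, precisely because the lack of coercivity forces all the control to come from the potential and penalty structure instead of from the operator $\bs a$ itself.

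With these bounds, the Aubin--Lions lemma under Assumption~\ref{gelfand} provides a subsequence with $\bs w_{\eps\delta}\rightharpoonup\bs w$ in $\V_p$ and in $H^1\big(0,T;L^2(\Omega)^m\big)$, strongly in $\HHH$ and a.e.\ in $Q_T$. The penalty bound forces $|L\bs w|\le g$, i.e.\ $\bs w\in\K_g$, and $\delta|L\bs w_{\eps\delta}|^{p-2}L\bs w_{\eps\delta}\to0$ in $\V_p'$. Testing the approximate equation against $\bs v-\bs w_{\eps\delta}$ for $\bs v\in\K_g\cap\V_p$, the monotonicity of the penalty gives it the correct sign because $|L\bs v|\le g$, the lower order and time terms pass to the limit by strong $L^2$ convergence and weak lower semicontinuity, and the monotone field $\bs a$ is handled by the usual Minty argument; in the limit one recovers \eqref{iv}.

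Finally, uniqueness follows at once and, notably, without any strict monotonicity: for two solutions $\bs w_1,\bs w_2$, choosing $\bs v=\bs w_2$ in the inequality for $\bs w_1$ and $\bs v=\bs w_1$ in that for $\bs w_2$ and adding yields
\begin{equation*}
\tfrac12\int_\Omega|(\bs w_1-\bs w_2)(t)|^2+\int_{Q_t}\big(\bs a(L\bs w_1)-\bs a(L\bs w_2)\big)\cdot L(\bs w_1-\bs w_2)+\int_{Q_t}\big(\bs b(\bs w_1)-\bs b(\bs w_2)\big)\cdot(\bs w_1-\bs w_2)\le0,
\end{equation*}
where $\bs w_1(0)=\bs w_2(0)$ was used. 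Since the last two integrals are nonnegative by the monotonicity of $\bs a$ (ensured by the convexity of $A$ in \eqref{apot}) and of $\bs b$, we conclude $\bs w_1\equiv\bs w_2$. The same computation with distinct data $(\bs f_i,\bs w_{0,i})$, followed by Gronwall's inequality, gives the continuous dependence in $\C\big([0,T];L^2(\Omega)^m\big)$ announced before the theorem, and, under strong monotonicity of $\bs a$, the corresponding estimate in $\V_p$.
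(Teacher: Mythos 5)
Your proposal follows the same architecture as the paper's proof: a penalised equation with the extra coercive term $\delta|L\bs w|^{p-2}L\bs w$, an energy estimate, a time-derivative estimate obtained by testing with $\partial_t\bs w_{\eps\delta}$ and exploiting the potential structure \eqref{apot}, the hypothesis $g\in W^{1,\infty}\big(0,T;L^\infty(\Omega)\big)$ (to control the $\partial_t g$ contribution) and $\bs w_0\in\K_{g(0)}$ (to kill the boundary term at $t=0$), then compactness, a Minty argument for the limit, and uniqueness by plain monotonicity of $\bs a$ and $\bs b$. The uniqueness paragraph, the identification of where each hypothesis enters, and the limit passage are exactly as in the paper.

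There is, however, a genuine gap at the step you yourself call decisive, and it comes from your choice of penalty $\frac1\eps k_\eps\big(|L\bs w|-g\big)\frac{L\bs w}{|L\bs w|}$. Testing your approximate equation with $\bs w_{\eps\delta}$ yields only
\begin{equation*}
\delta\int_{Q_T}|L\bs w_{\eps\delta}|^p\le C
\qquad\text{and}\qquad
\frac1\eps\int_{Q_T}k_\eps\big(|L\bs w_{\eps\delta}|-g\big)\,|L\bs w_{\eps\delta}|\le C .
\end{equation*}
Note that $k_\eps$ must be bounded (as is the truncated exponential \eqref{kapa_eps}) for the penalty to map $\V_p$ into $\V_p'$ and for the monotone-operator existence theory you invoke; but then, on the set where $k_\eps$ saturates at its maximum, the two bounds above control only the \emph{$L^1$} mass of $L\bs w_{\eps\delta}$ (by a super-exponentially small quantity) and its $L^p$ mass merely by $C/\delta$. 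Nothing in your estimates excludes an $L^p$ mass of order $1/\delta$ concentrated on a set of tiny measure, so the uniform bound on $\|L\bs w_{\eps\delta}\|_{L^p(Q_T)^\ell}$ that you assert (``a bound for $\bs w_{\eps\delta}$ in $\V_p$'') is not justified; and this bound is precisely what your $\partial_t$-estimate consumes, since $|\partial_t A(x,t,\bs\xi)|\le A_1+A_2|\bs\xi|^p$ by \eqref{op:a:bis}. Consequently $\|\partial_t\bs w_{\eps\delta}\|_{L^2(Q_T)^m}\le C$ uniformly in $(\eps,\delta)$ does not follow, and the $H^1\big(0,T;L^2(\Omega)^m\big)$ regularity of the limit --- the very content of the theorem --- is lost. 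The paper's penalty $\big(\delta+k_\eps(|L\bs w_{\eps\delta}|^p-g^p)\big)|L\bs w_{\eps\delta}|^{p-2}L\bs w_{\eps\delta}$ in \eqref{ApproxProblem} is engineered to avoid exactly this: the same test then controls $\int_{Q_T}k_\eps(\cdot)\,|L\bs w_{\eps\delta}|^p\le C$, and splitting $Q_T$ according to whether $|L\bs w_{\eps\delta}|^p-g^p$ is below $\sqrt\eps$, between $\sqrt\eps$ and $1/\eps$, or above $1/\eps$ gives $\|L\bs w_{\eps\delta}\|^p_{L^p(Q_T)^\ell}\le C$ uniformly in $(\eps,\delta)$, turning \eqref{Est-dtu_epsilon} into a uniform estimate (the argument $|L\bs w|^p-g^p$ inside $k_\eps$ also keeps the time-derivative computation exact, as in your sketch). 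In short, the penalty must carry the growth $|L\bs w_{\eps\delta}|^{p-1}$: your scheme can be repaired either by adopting the paper's penalty or by taking $k_\eps(s)$ comparable to $(s^+)^{p-1}/\eps$, for which the energy estimate again controls the $L^p$ mass of $L\bs w_{\eps\delta}$ on the set where the constraint is violated; with a bounded $k_\eps$ and the unit direction, the decisive estimate breaks down.
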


\begin{thm}\label{stability} 
Suppose that the assumptions  of Theorem~\ref{IV0} hold and for $i=1,2$, let $\bs w_i$ be the solution to the variational inequality \eqref{iv} with data $\bs f_i,\bs w_{i_0},g_i$ satisfying \eqref{assumptions_iv}.
Then there exists a positive constant $C=C(T)$, such that,
\begin{equation} \label{ContDep1}
	\|\bs w_{1}-\bs w_{2}\|^2_{L^\infty(0,T;L^{2}(\Omega)^m)}\le
	C\Big(\|\bs f_1-\bs f_2\|^2_{L^{2}(Q_T)^m}
	+\|\bs w_{1_0}-\bs w_{2_0}\|^2_{L^2(\Omega)^m} 
	+\|g_1-g_2\|_{L^1(0,T;L^\infty(\Omega))}\Big).
\end{equation}

If , in addition, $\bs a$ satisfies
\begin{equation} \label{a:monot:forte}
(\bs a(x,t,\bs\xi)-\bs a(x,t,\bs\xi'))\cdot(\bs\xi-\bs\xi')\ge
\begin{cases}
a_*|\bs \xi-\bs \xi'|^p&\text{ if }p\ge 2,\\
a_*\big(|\bs \xi|+|\bs \xi'|\big)^{p-2}|\bs \xi-\bs \xi'|^2&\text{ if }p<2,
\end{cases}
\end{equation}
where $a_*$ is a positive constant depending on $p$, $1<p<\infty$,
then there exists $C_*=C(a_*,p,T)>0$ such that
\begin{multline} \label{ContDep22}
\|\bs w_{1}-\bs w_{2}\|^{2}_{L^\infty(0,T;L^{2}(\Omega)^m)}
+\|\bs w_1-\bs w_2\|^{2\vee p}_{\V^p}\\
\leq
C_*\Big(\|\bs f_1-\bs f_2\|^{2}_{L^{2}(Q_T)^m}
+\|\bs w_{1_0}-\bs w_{2_0}\|^2_{L^2(\Omega)^m} 
+\|g_1-g_2\|_{L^1(0,T;L^\infty(\Omega))}\Big).
\end{multline}
\end{thm}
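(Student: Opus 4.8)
The plan is to compare $\bs w_1$ and $\bs w_2$ by testing each inequality \eqref{iv} with a rescaling of the \emph{other} solution, the rescaling being designed to repair the mismatch between the convex sets $\K_{g_1}$ and $\K_{g_2}$. For a.e.\ $t$ put
\[
\lambda_1(t)=\inf_{x\in\Omega}\frac{g_1(x,t)}{g_2(x,t)},\qquad \lambda_2(t)=\inf_{x\in\Omega}\frac{g_2(x,t)}{g_1(x,t)},
\]
which depend on $t$ alone and are bounded below by $g_*/g^*>0$. Since $L$ differentiates only in $x$, one has $L(\lambda_i(t)\bs w_j)=\lambda_i(t)L\bs w_j$, whence $|L(\lambda_1\bs w_2)|=\lambda_1|L\bs w_2|\le\lambda_1 g_2\le g_1$; thus $\bs v_1:=\lambda_1\bs w_2\in\K_{g_1}\cap\V_p$ and, symmetrically, $\bs v_2:=\lambda_2\bs w_1\in\K_{g_2}\cap\V_p$ are admissible. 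The quantitative bound $|1-\lambda_i(t)|\le g_*^{-1}\|g_1(t)-g_2(t)\|_{L^\infty(\Omega)}$ is the channel through which the difference of the obstacles will enter.

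Next I insert $\bs v_1$ into \eqref{iv} for $\bs w_1$ and $\bs v_2$ into \eqref{iv} for $\bs w_2$ and add. Writing $\bs w=\bs w_1-\bs w_2$ and using $\bs v_1-\bs w_1=-\bs w-(1-\lambda_1)\bs w_2$ and $\bs v_2-\bs w_2=\bs w-(1-\lambda_2)\bs w_1$, the principal part of the time-derivative terms equals $-\tfrac12\int_\Omega|\bs w(t)|^2+\tfrac12\int_\Omega|\bs w(0)|^2$, the principal parts of the $\bs a$- and $\bs b$-terms are $-\int_{Q_t}(\bs a(L\bs w_1)-\bs a(L\bs w_2))\cdot L\bs w\le0$ and $-\int_{Q_t}(\bs b(\bs w_1)-\bs b(\bs w_2))\cdot\bs w\le0$ (monotonicity, $\bs a$ being monotone by the convexity \eqref{apot}), and the data contribute $-\int_{Q_t}(\bs f_1-\bs f_2)\cdot\bs w$. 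Every remaining term carries an explicit factor $1-\lambda_1$ or $1-\lambda_2$.

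These correction terms are where the obstacle difference is produced. The $\bs a$-, $\bs b$- and $\bs f$-corrections are controlled by the growth conditions \eqref{op:a:bis} and \eqref{op:a:prop:b:bis}, by the constraint bound $|L\bs w_i|\le g^*$ (so that $\bs a(L\bs w_i)\cdot L\bs w_j$ is bounded in $L^\infty(Q_T)$), and by the a priori bounds $\|\bs w_i\|_{L^\infty(0,T;L^2(\Omega)^m)}$ and $\|\partial_t\bs w_i\|_{L^2(Q_T)}$ furnished by Theorem~\ref{IV0}; combined with $|1-\lambda_i(t)|\le g_*^{-1}\|g_1(t)-g_2(t)\|_{L^\infty}$ each is bounded by $C\|g_1-g_2\|_{L^1(0,T;L^\infty(\Omega))}$. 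I expect the time-derivative correction $\int_{Q_t}(1-\lambda_i)\partial_\tau\bs w_i\cdot\bs w_j$ to be the \emph{main obstacle}: keeping the obstacle difference at the first power in $L^1(0,T;L^\infty)$ (rather than an $L^2$-in-time norm obtained by a crude Cauchy--Schwarz) is the delicate point, and it forces one to exploit the regularity of $\partial_t\bs w_i$ against the uniform bound of $\bs w_j$. After discarding the two nonnegative monotone terms and absorbing $\int_{Q_t}(\bs f_1-\bs f_2)\cdot\bs w$ by Young's inequality, Gronwall's lemma applied to $t\mapsto\tfrac12\|\bs w(t)\|_{L^2(\Omega)^m}^2$ yields \eqref{ContDep1}.

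Finally, for \eqref{ContDep22} I keep the $\bs a$-term instead of discarding it and invoke the strong monotonicity \eqref{a:monot:forte}. For $p\ge2$ it bounds $\int_{Q_t}(\bs a(L\bs w_1)-\bs a(L\bs w_2))\cdot L\bs w$ from below by $a_*\|L\bs w\|_{L^p(Q_t)^\ell}^p=a_*\|\bs w\|_{\V_p}^p$, giving the exponent $p=2\vee p$. For $p<2$ it gives the lower bound $a_*\int_{Q_t}(|L\bs w_1|+|L\bs w_2|)^{p-2}|L\bs w|^2$, and Hölder's inequality together with $|L\bs w_i|\le g^*$ converts this into a multiple of $\|\bs w\|_{\V_p}^2$, i.e.\ the exponent $2=2\vee p$. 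Retaining this term on the left-hand side and repeating the Gronwall argument produces \eqref{ContDep22} with $C_*=C(a_*,p,T)$.
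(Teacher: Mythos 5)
Your proposal is correct and follows essentially the same route as the paper's own proof: the paper likewise tests each inequality with a scalar, $t$-dependent rescaling of the other solution (it uses ${\bs w_i}_j(t)=g_*\bs w_i(t)/\big(g_*+\|g_1(t)-g_2(t)\|_{L^\infty(\Omega)}\big)$ where you use $\lambda_j(t)\bs w_i(t)$ with $\lambda_j(t)=\inf_x g_j/g_i$ --- the same device, with the same key bound $|1-\rho(t)|\le g_*^{-1}\|g_1(t)-g_2(t)\|_{L^\infty(\Omega)}$), adds the two inequalities, discards the monotone principal terms, bounds the correction terms via the a priori estimates so that only $\|g_1-g_2\|_{L^1(0,T;L^\infty(\Omega))}$ appears, concludes \eqref{ContDep1} by Gronwall, and obtains \eqref{ContDep22} by retaining the $\bs a$-term and using \eqref{a:monot:forte} directly for $p\ge2$ and via the reverse H\"older inequality with $|L\bs w_i|\le g^*$ for $p<2$. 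The delicate point you flag --- keeping the \emph{first} power of the $L^1(0,T;L^\infty(\Omega))$ norm in the corrections paired with $\partial_t\bs w_i$ and $\bs f_i$, which are only square-integrable in time --- is handled in the paper at exactly the same level of detail, namely by asserting the bound $\Theta(t)\le C\int_0^t\|g_1(\tau)-g_2(\tau)\|_{L^\infty(\Omega)}\,d\tau$ for its collection of correction terms, so your treatment matches the paper's argument and rigor.
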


\begin{rem}
For strong solutions $\bs w\in\K_g\cap H^1\big(0,T;L^2(\Omega)^m\big)$ the variational inequality \eqref{iv} for a.e. $t\in(0,T)$ is equivalent to
\begin{multline}\label{iv_espaco}
\int_\Omega\partial_t\bs w(t)\cdot(\bs z-\bs w(t))+\int_\Omega\bs a(t,L\bs w(t))\cdot L(\bs z-\bs w(t))\\
+\int_\Omega\bs b(t,\bs w(t))\cdot(\bs z-\bs w(t))\ge\int_\Omega\bs f(t)\cdot(\bs z-\bs w(t)),\quad\forall\bs z\in\K_{g(t)},
\end{multline}
provided we assume $g\in\C\big([0,T];L^\infty(\Omega)\big)$, $g\ge g_*>0$. Indeed, for arbitrary $\delta>0$, $0<\delta<t<T-\delta,$ for fixed $t\in(0,T)$, we set $\displaystyle\eps_\delta=\sup_{t-\delta<\tau< t+\delta}\|g(t)-g(\tau)\|_{L^\infty(\Omega)}$ and we may define
\begin{equation*}
\bs v(\tau)=\begin{cases}\bs0&\text{ if }\tau\not\in(t-\delta,t+\delta),\\
\frac{g_*}{g_*+\eps_\delta}\bs z&\text{ if }\tau\in(t-\delta,t+\delta),
\end{cases}
\end{equation*}
which is such that $\bs v\in\V_p\cap\K_g$ whenever $\bs z\in\K_{g(t)}$. Hence we can choose this $\bs v$ as test function in \eqref{iv} with $t=T$, divide by $2\delta$ and let $\delta\rightarrow0$ obtaining, by Lebesgue's theorem, the inequality \eqref{iv_espaco} for a.e. $t\in(0,T)$.
\end{rem}

As a Corollary of Theorems \ref{th2.2} and \ref{IV0}, we can drop the differentiability in time of $g$ and still obtain an existence and uniqueness result for the weak variational inequality \eqref{iqv} with $\K_g$, extending \cite[Theorem~3.8]{Ken2013}.

\begin{thm}\label{IVweak} Suppose that Assumptions \ref{op:L}, \ref{cond:bis}, \ref{Xp} and \ref{gelfand} are satisfied and
\begin{equation*}
\bs f\in L^2(Q_T)^m,\quad g\in\C\big([0,T];L^\infty(\Omega)\big)\text{ with } g\ge g_*>0,\quad\bs w_0\in\K_{g(0)}.
\end{equation*}
	
Then the variational inequality \eqref{iqv} for $\K_g$  has a unique weak solution $\bs w\in\V_p\cap\C\big([0,T];L^2(\Omega)^m\big)$.
\end{thm}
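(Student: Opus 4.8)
The plan is to split the statement into the existence of a weak solution together with its $\C\big([0,T];L^2(\Omega)^m\big)$-regularity, obtained by approximation through the strong solutions of Theorem~\ref{IV0}, and its uniqueness, which reduces to Theorem~\ref{th2.2} because the convex $\K_g$ in \eqref{kapaIV} is independent of the solution. First I would regularize the merely continuous datum $g$: choosing $g_n=\rho_{1/n}*g+\eps_n$ with $\eps_n=\|g-\rho_{1/n}*g\|_{\C([0,T];L^\infty(\Omega))}$ (mollification in time after a continuous extension past the endpoints) produces $g_n\in W^{1,\infty}\big(0,T;L^\infty(\Omega)\big)$ with $g\le g_n$, $g_n\ge g_*>0$ and $g_n\to g$ in $\C\big([0,T];L^\infty(\Omega)\big)$. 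Since $g\le g_n$ gives $\bs w_0\in\K_{g(0)}\subset\K_{g_n(0)}$, Theorem~\ref{IV0} yields for each $n$ a unique strong solution $\bs w_n\in\V_p\cap H^1\big(0,T;L^2(\Omega)^m\big)$ of \eqref{iv} with datum $g_n$. Applying the continuous-dependence estimate \eqref{ContDep1} to the pair $(\bs w_n,\bs w_k)$, whose data differ only through $g_n,g_k$, gives
\[
\|\bs w_n-\bs w_k\|^2_{L^\infty(0,T;L^2(\Omega)^m)}\le C\,\|g_n-g_k\|_{L^1(0,T;L^\infty(\Omega))}\le CT\,\|g_n-g_k\|_{\C([0,T];L^\infty(\Omega))},
\]
whose right-hand side vanishes as $n,k\to\infty$; hence $\{\bs w_n\}_n$ is Cauchy in $\C\big([0,T];L^2(\Omega)^m\big)$ and converges there to a limit $\bs w$, which already carries the asserted time-continuity. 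The constraint $|L\bs w_n|\le g_n\le g^*$ bounds $\{\bs w_n\}_n$ in $L^\infty(0,T;\X_p)$ by Assumption~\ref{Xp}, so along a subsequence $\bs w_n\rightharpoonup\bs w$ in $\V_p$ and $L\bs w_n\rightharpoonup L\bs w$ in $L^p(Q_T)^\ell$.

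Next I would check that $\bs w$ solves \eqref{iqv} for $\K_g$. The inclusion $\bs w\in\K_g$ follows from weak lower semicontinuity of the convex functional $\bs z\mapsto\int_{Q_T}\big(|L\bs z|-g\big)^+$, since $\big(|L\bs w_n|-g\big)^+\le g_n-g\to0$ uniformly. For the passage to the limit I would fix $\bs v\in\Y_p$ with $\bs v\in\K_g\subset\K_{g_n}$ (admissible for $\bs w_n$ precisely because $g\le g_n$) and write the weak inequality \eqref{iqv} satisfied by $\bs w_n$. The term $\int_0^T\langle\partial_t\bs v,\bs v-\bs w_n\rangle_p$ and the right-hand side pass to the limit by weak convergence in $\V_p$ and by the strong convergence $\bs w_n\to\bs w$ in $L^2(Q_T)^m$, which combined with the linear growth of $\bs b$ also takes care of $\int_{Q_T}\bs b(\bs w_n)\cdot(\bs v-\bs w_n)$. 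Only the nonlinear term $\int_{Q_T}\bs a(L\bs w_n)\cdot L(\bs v-\bs w_n)$ resists, since merely $L\bs w_n\rightharpoonup L\bs w$ is known; here I would use the monotonicity of $\bs a=\nabla_{\bs\xi}A$ coming from the convexity of $A$ in \eqref{apot} to bound this term above by $\int_{Q_T}\bs a(L\bs v)\cdot L(\bs v-\bs w_n)$, which does pass to the limit, thus reaching the Minty form of \eqref{iqv}.

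The decisive obstacle is the converse Minty step needed to turn $\bs a(L\bs v)$ back into $\bs a(L\bs w)$. Its standard mechanism---testing with $\bs w+s(\bs v-\bs w)$ and letting $s\to0^+$---requires perturbations of $\bs w$ that lie in $\Y_p\cap\K_g$, whereas the weak solution is only known to belong to $\V_p$ and need not possess a time derivative in $\V_p'$. I would circumvent this with a Landes-type time regularization $\bs w_\mu$ of $\bs w$, which remains in the convex, solution-independent set $\K_g$, belongs to $\Y_p$, and satisfies $\bs w_\mu\to\bs w$ in $\V_p$; these admissible directions identify the weak limit of $\bs a(L\bs w_n)$ as $\bs a(L\bs w)$ and yield \eqref{iqv}. (Under the additional strong monotonicity \eqref{a:monot:forte} this difficulty evaporates, since \eqref{ContDep22} makes $\{\bs w_n\}_n$ Cauchy in $\V_p$, so $L\bs w_n\to L\bs w$ strongly in $L^p(Q_T)^\ell$ and $\bs a(L\bs w_n)\to\bs a(L\bs w)$ directly.)

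It remains to settle uniqueness. The estimate \eqref{ContDep1} first shows that $\bs w$ does not depend on the chosen approximating sequence, so it is a well-defined weak solution; its uniqueness among all weak solutions of \eqref{iqv} with $\K_g$ is then exactly Theorem~\ref{th2.2}, whose hypotheses hold here since $\K_g$ is solution-independent and $\X_p\subset\HH$ by Assumption~\ref{gelfand}. In particular the whole family $\{\bs w_n\}_n$, not merely a subsequence, converges to $\bs w$, completing the proof.
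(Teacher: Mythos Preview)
Your overall strategy---approximate $g$ by $g_n\in W^{1,\infty}\big(0,T;L^\infty(\Omega)\big)$, invoke Theorem~\ref{IV0} for strong solutions $\bs w_n$, pass to the limit in the weak inequality via Minty, and appeal to Theorem~\ref{th2.2} for uniqueness---matches the paper's. Your device of arranging $g\le g_n$ so that $\K_g\subset\K_{g_n}$ is a nice simplification of the paper's scaling $\bs v_n=\rho_n\bs v$ for producing admissible test functions.

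However, the key compactness step has a genuine gap. You obtain the Cauchy property of $\{\bs w_n\}_n$ in $\C\big([0,T];L^2(\Omega)^m\big)$ from the continuous-dependence estimate \eqref{ContDep1}, treating the constant $C$ as uniform in $n,k$. It is not. If you trace the proof of Theorem~\ref{stability}, the bound \eqref{DepCont3} on $\Theta(t)$ uses the estimates \eqref{Est-u_epsilon}--\eqref{Est-dtu_epsilon}; in particular it needs a bound on $\|\partial_t\bs w_i\|_{L^2(Q_T)^m}$, and going back to \eqref{dtuepdelta-2} in the proof of Theorem~\ref{IV0} you see that this bound contains the factor $\|\partial_t g_i\|_{L^\infty(Q_T)}$. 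Since $g$ is merely in $\C\big([0,T];L^\infty(\Omega)\big)$, the mollifications $g_n=\rho_{1/n}*g+\eps_n$ have $\|\partial_t g_n\|_{L^\infty}$ that in general blows up as $n\to\infty$, so the constant $C$ in your Cauchy estimate is not uniform and the argument collapses. (The same objection applies to the alternative route through \eqref{ContDep22} under \eqref{a:monot:forte}.)

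The paper sidesteps exactly this difficulty by not using \eqref{ContDep1} at all. Instead it proves directly that $\{\bs w_n\}_n$ is equicontinuous in $\C\big([0,T];L^2(\Omega)^m\big)$: for $|t-s|<\delta$ one takes $\rho_n\bs w_n(s)\in\K_{g_n(\tau)}$ as test function in the pointwise-in-time inequality \eqref{iv_espaco} for $\bs w_n$ at $t=\tau$ and integrates in $\tau$ over $[s,t]$. The term involving $\partial_\tau\bs w_n$ is handled by writing it as $(\rho_n-1)\int_\Omega(\bs w_n(t)-\bs w_n(s))\cdot\bs w_n(s)$, which is controlled by $|\rho_n-1|\lesssim\eps_n/g_*$ and the uniform $L^\infty\big(0,T;L^2(\Omega)^m\big)$ bound on $\bs w_n$---no uniform bound on $\partial_t\bs w_n$ is needed. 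This yields $\|\bs w_n(t)-\bs w_n(s)\|_{L^2(\Omega)^m}^2\le C(\eps+|t-s|^{1/2})$, where $\eps$ is the common modulus of continuity of the $g_n$ (inherited from $g$), and Ascoli gives relative compactness. Replacing your Cauchy argument by this equicontinuity argument repairs the proof; the remainder of your outline (passage to the limit, Minty with a Landes-type regularization as in Lemma~\ref{RegSeq}, uniqueness via Theorem~\ref{th2.2}) is then in line with the paper.
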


\section{Applications with particular $G$ and $L$}

In this section we present some examples of compact nonlocal operators $G$, satisfying the Assumption~\ref{Phi}, and linear operators $L$, satisfying the Assumption~\ref{op:L}.

\subsection{Nonlocal compact operators} $ $

Here we are interested in two examples of compact operators $G$ given in the form
\begin{equation} \label{G0}
G[\bs v]=g(x,t,\bs\zeta(\bs v)(x,t))\ \text{ a.e. in }Q_T,
\end{equation}
where $g=g(x,t,\bs\zeta):Q_T\times\R^m\rightarrow\R$ is a positive function, continuous in $(x,t)\in Q_T$ and in $\bs\zeta\in\R^m$, and $\bs\zeta:\V_p\rightarrow \C(\overline Q_T)^m$ is a completely continuous mapping.

\subsubsection{\bf {\footnotesize Regularization by integration in time}} $ $

We define the compact operator by
\begin{equation*}
\bs\zeta(\bs v)(x,t)=\int_0^t\bs v(x,s)K(t,s)ds,\quad\text{ for a.e. }(x,t)\in Q_T,
\end{equation*}
where $K=K(t,s)$ is a given kernel satisfying
\begin{equation} \label{G2}
K, \partial_t K\in L^\infty\big((0,T)\times(0,T)\big).
\end{equation}

For simplicity, we assume here the existence of a constant $g_*$ and a real bounded function $g^*$ such that
\begin{equation*} 
0<g_*\le g(x,t,\bs\xi)\le g^*(M)\quad\text{ for a.e. }(x,t)\in Q_T,\ \forall\bs\xi:|\bs\xi|\le M.
\end{equation*}

We also assume that the embedding
\begin{equation} \label{G4}
\X_p\hookrightarrow \C(\overline\Omega)^m\ \text{ is compact},
\end{equation}
which, by the Rellich-Kondratchov theorem, is satisfied if $\X_p\subset W^{s,q}(\Omega)^m$ with $q>\frac{d}{s}$.

Let $\V_p=L^p\big(0,T;\X_p\big)$, $p>1$, and observe that, by assumption \eqref{G2}, not only $\bs\zeta(\bs v)\in\V_p$ but also $\partial_t\bs\zeta(\bs v)\in\V_p$, i.e.
$$\bs\zeta(\bs v)\in W^{1,p}(0,T;\X_p).$$
Hence, by Lemma 2.2 of \cite{BarrettSuli2012}, for instance, the image by $\bs\zeta$ of a bounded subset of $\V_p$, being bounded in $W^{1,p}(0,T;\X_p)$, by \eqref{G4} is relatively compact in $\C(\overline Q_T)^m$. So $\bs\zeta:\V_p\rightarrow\C(\overline Q_T)^m$ is a completely continuous mapping and, therefore, $G$ defined in \eqref{G0} satisfies Assumption \ref{Phi}.

\subsubsection{\bf {\footnotesize Coupling with a nonlinear parabolic equation}}$ $

We may define the compact operator through the unique solution of the Cauchy-Dirichlet problem for the quasilinear parabolic scalar equation
\begin{align}
\label{p1}&\partial_t\zeta-\nabla\cdot \bs a(x,t,\nabla\zeta)=\varphi_{\bs v}\quad\text{in }Q_T,\\
\label{p2}&\zeta=0\quad\text{on }\partial\Omega\times(0,T),\quad\zeta(0)=\zeta_0\quad\text{on }\Omega,
\end{align}
where $\varphi_{\bs v}=\varphi(x,t)$ depends on $\bs v\in\V_p$, and the vector field $\bs a$ satisfies \eqref{zero} and \eqref{a:monot:forte} with $p=2$ and $\ell=d$.

It is well known that for each $\varphi\in L^2(Q_T)$ and $\zeta_0\in L^2(\Omega)$, the weak solution $\zeta\in L^\infty\big(0,T;L^2(\Omega)\big)\cap L^2\big(0,T;H^1_0(\Omega)\big)$  to \eqref{p1}, \eqref{p2}  and \eqref{iv} depends continuously, in these spaces, for the variation of $\varphi$ in the weak topology of $L^2(Q_T)$.
Moreover, if $\zeta_0\in \C^{\gamma}(\overline\Omega)$ is H\"older continuous for some $0<\gamma<1$ and $\varphi\in L^q(Q_T)$ for $q>\frac{d+2}{2}$, the following estimate holds (see \cite{LadyzhenskayaSolonnikovUraltseva1968},  p. 419)
\begin{equation*}
\|\zeta\|_{\C^\lambda (\overline Q_T)}\leq C\left(\|\zeta_0\|_{\C^\gamma(\overline\Omega)}+ \|\varphi\|_{L^q(Q_T)}\right),
\end{equation*}
for some $\lambda$, $0<\lambda\leq\gamma<1$, where $C>0$ is a constant independent of the data $\varphi$.

Now, for each $\bs v\in\V_p$ with $p>\frac{d+2}2$ ($p=2$ if $d=1$) and given $\bs\psi\in L^\infty(Q_T)^m$ and $\bs\eta\in L^\infty(Q_T)^\ell$, we may choose in \eqref{G0}, $\zeta=\zeta(\bs v)$ as being the solution of \eqref{p1}, \eqref{p2}, with a given $\zeta_0\in\C^\gamma(\overline\Omega)$ and
\begin{equation}\label{H}
\varphi_{\bs v}=\varphi_0+\bs\psi\cdot\bs v+\bs\eta\cdot L\bs v\in L^p(Q_T),
\end{equation}
for some fixed $\varphi_0\in L^p(Q_T).$ Hence, by \eqref{H} and Ascoli theorem, the mapping $\bs v\mapsto\varphi_{\bs v}\mapsto\zeta(\bs v)$ is completely continuous from $\V_p$ into $\C(\overline Q_T)$. Indeed, if $\bs v_n\rightharpoonup\bs v$ in $\V_p$, $\{\zeta(\bs v_n)\}_n$ is bounded in ${\C}^\lambda(\overline Q_T)\cap L^2\big(0,T;H^1_0(\Omega)\big)$ and, for some subsequence, $\zeta(\bs v_n)\rightharpoonup\zeta$ weakly in $L^2\big(0,T;H^1_0(\Omega)\big)$ and uniformly in $\overline Q_T$, for a $\zeta\in L^2\big(0,T;H^1_0(\Omega)\big)\cap\C(\overline Q_T)$, where we have $\zeta=\zeta(\bs v)$ by monotonicity and uniqueness of the solution of \eqref{p1}, \eqref{p2}. Then the whole sequence converges and the complete continuity of $G=G[\bs v]$, from $\V_p$ into $\C(\overline Q_T)$, is guaranteed by the assumptions.

We observe that, if $\X_p\subset W^{s,p}(\Omega)^m$, $s=1,2,\dots$, we can also choose in \eqref{p1}
\begin{equation*}
\varphi_{\bs v}=\varphi_0+\sum_{0\le|\alpha|\le s}\bs\psi_\alpha\cdot\partial^\alpha\bs v,
\end{equation*}
with $\bs\psi_\alpha\in L^\infty(Q_T)^m$, provided $p>\frac{d+2}{2}$, and even more general terms involving linear combinations of the gradients of the $\bs\psi_\alpha\cdot\partial^\alpha\bs v\in L^p(Q_T)$, $0\le|\alpha|\le s$, provided $p>d+2$.

\subsection{Linear differential operators}$ $

In this subsection we illustrate some concrete results for the operators $L$ referred as examples in the Introduction for convex sets of the type \eqref{kapa}. For simplicity, in all the examples we consider the vector fields
$$ \bs a(\bs\xi)=\alpha\,|\bs \xi|^{p-2}\bs\xi,\ \bs\xi\in\R^\ell,\qquad\alpha=\alpha(x,t)\ge0\ \text{a.e. in }Q_T\qquad\text{and}\qquad \bs b\equiv\bs0$$
and we assume that the operator $G$ satisfies the Assumption \ref{Phi}.

\subsubsection{\bf {\footnotesize A problem with gradient constraint}}$ $

\begin{coro}
Let $\Omega$ be a bounded open subset of $\R^d$ with a Lipschitz boundary, $\V_p=L^p\big(0,T;W^{1,p}_0(\Omega)\big)$ and  $p>\max\big\{1,\frac{2d}{d+2}\big\}$. Let $f\in L^2(Q_T)$ and $u_0\in\K_{G[u_0]}$. Then the following quasi-variational inequality	has a weak solution 
\begin{equation*}
\left\{\begin{array}{l}
u\in\K_{G[u]},\vspace{3mm}\\
\displaystyle\int_0^T\langle\partial_t v, v- u\rangle_p+\int_{Q_T}\alpha\,|\nabla u|^{p-2}\nabla u\cdot\nabla( v- u)
\ge\int_{Q_T} f( v- u)-\frac12\int_\Omega|v(0)-u_0|^2,\vspace{3mm}\\
\hfill{\forall\, v\in \Y_p\text{ such that }
v\in\K_{G[u]}}.
\end{array}
\right.
\end{equation*}
\hfill{$\square$}
\end{coro}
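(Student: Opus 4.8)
The plan is to recognise this quasi-variational inequality as the special case of \eqref{iqv} obtained by taking $L=\nabla$ (so $m=1$, $\ell=d$, $s=1$), $\bs a(x,t,\bs\xi)=\alpha(x,t)|\bs\xi|^{p-2}\bs\xi$, $\bs b\equiv\bs0$, together with the spaces $\X_p=W^{1,p}_0(\Omega)$, $\HH=L^2(\Omega)$ and $\V_p=L^p(0,T;W^{1,p}_0(\Omega))$ as stated. Since $G$ is assumed to satisfy Assumption~\ref{Phi}, and $f\in L^2(Q_T)$, $u_0\in\K_{G[u_0]}$, the conclusion follows immediately from Theorem~\ref{IQV0} once I verify that the remaining structural Assumptions~\ref{op:L}, \ref{op:a}, \ref{Xp} and \ref{gelfand} hold for this data. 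The verification splits into a routine part, for the operators, and the one genuinely restrictive point, the choice of exponent $p$.

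For the routine part, Assumption~\ref{op:L} holds with $\bs V_p=W^{1,p}(\Omega)$ under the graph norm, the gradient being a first order linear operator with constant coefficients. For Assumption~\ref{op:a}, writing $a^*=\|\alpha\|_{L^\infty(Q_T)}$ and recalling $\alpha\ge0$, the bound $|\bs a(x,t,\bs\xi)|=\alpha|\bs\xi|^{p-1}\le a^*|\bs\xi|^{p-1}$ is exactly \eqref{zero}, while the classical monotonicity inequality for $\bs\xi\mapsto|\bs\xi|^{p-2}\bs\xi$, multiplied by the nonnegative weight $\alpha$, gives \eqref{op:a:prop:c}; the Carath\'eodory character is clear since $\alpha$ is measurable and $|\bs\xi|^{p-2}\bs\xi$ is continuous. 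Because $\bs b\equiv\bs0$, the conditions \eqref{op:a:prop:b} and \eqref{op:c:prop:c} hold trivially.

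The decisive step is Assumption~\ref{Xp} together with Assumption~\ref{gelfand}, and this is precisely where the hypothesis $p>\max\{1,\tfrac{2d}{d+2}\}$ enters. The Poincar\'e inequality on $W^{1,p}_0(\Omega)$ shows that $\|v\|_{\X_p}=\|\nabla v\|_{L^p(\Omega)^d}$ is a norm equivalent to the graph norm induced from $\bs V_p$, so $\X_p=W^{1,p}_0(\Omega)$ is an admissible closed subspace. The inclusion $\X_p\subset L^2(\Omega)$ and the compactness required in Assumption~\ref{gelfand} with $\HH=L^2(\Omega)$ both rest on the Rellich--Kondrachov theorem: when $p<d$ one has $W^{1,p}_0(\Omega)\hookrightarrow\hookrightarrow L^q(\Omega)$ compactly for every $q<p^*=\tfrac{pd}{d-p}$, and the inequality $p>\tfrac{2d}{d+2}$ is exactly equivalent to $p^*>2$, so $q=2$ is an admissible exponent; the cases $p\ge d$ yield compact embeddings into every $L^q(\Omega)$, $q<\infty$ (or into $\C(\overline\Omega)$), hence into $L^2(\Omega)$ as well. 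Together with the density of $W^{1,p}_0(\Omega)$ in $L^2(\Omega)$ this furnishes the compact Gelfand triple $(\X_p,\HH,\X_p')$. With all of Assumptions~\ref{op:L}--\ref{Phi} in force, Theorem~\ref{IQV0} delivers a weak solution $u\in\V_p\cap L^\infty(0,T;L^2(\Omega))$, which is the asserted conclusion. I expect no real obstacle beyond bookkeeping: the only substantive check is the sharp role of the exponent condition, everything else being a transcription of the general hypotheses to the $p$-Laplacian with gradient constraint.
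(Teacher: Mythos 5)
Your proposal is correct and follows exactly the paper's own route: the paper likewise deduces the corollary from Theorem~\ref{IQV0} by taking $Lu=\nabla u$, $V_p=W^{1,p}(\Omega)$, $\X_p=W^{1,p}_0(\Omega)$, $\HH=L^2(\Omega)$, the only substantive point being that $W^{1,p}_0(\Omega)\hookrightarrow L^2(\Omega)$ is compact precisely under $p>\max\{1,\tfrac{2d}{d+2}\}$. Your verification of Assumptions~\ref{op:L}, \ref{op:a}, \ref{Xp} and \ref{gelfand} (Poincar\'e equivalence of norms, monotonicity of the weighted $p$-Laplacian field, Rellich--Kondrachov with $p^*>2$) just spells out what the paper leaves implicit.
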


Indeed with $Lu=\nabla u$ and $V_p=W^{1,p}(\Omega)$, the Assumptions \ref{op:L} to \ref{Phi} are satisfied since the inclusion of $\X_p=W^{1,p}_0(\Omega)$ into $\HH=L^2(\Omega)$ is compact for $p>\max\big\{1,\frac{2d}{d+2}\big\}$.
 
The degenerate case $\alpha\equiv0$ corresponds to the variational model of sandpile growth where $G$ models the slope of the pile (see \cite{Pri1996-1}). In \cite{Pri1996-1}, Prigozhin introduces an operator $G$ which is discontinuous in the height $ u$ of the sandpile and leads to a quasi-variational formulation that is still an open problem.

\subsubsection{\bf {\footnotesize A problem with Laplacian constraint}}$ $

\begin{coro}	Let $\Omega$ be a bounded open subset of $\R^d$ with a $\C^{1,1}$ boundary, $\V_p=L^p(0,T;W^{2,p}_0(\Omega))$ with $p>\max\big\{1,\frac{2d}{d+4}\big\}$.  Let $f\in L^2(Q_T)$ and $u_0\in\K_{G[u_0]}$. Then the following quasi-variational inequality has a weak solution 
\begin{equation*}
\left\{\begin{array}{l}
u\in\K_{G[u]},\vspace{3mm}\\
\displaystyle\int_0^T\langle\partial_t v, v- u\rangle_p+\int_{Q_T} \alpha\,|\Delta u|^{p-2}\Delta u\,\Delta( v- u)
\ge\int_{Q_T} f( v- u)-\frac12\int_\Omega|v(0)-u_0|^2,\vspace{3mm}\\
\hfill{\forall\, v\in \Y_p\text{ such that }
v\in\K_{G[u]}.}
\end{array}
\right.
\end{equation*}
\hfill{$\square$}
\end{coro}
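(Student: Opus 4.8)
The plan is to obtain this Corollary as a direct consequence of Theorem~\ref{IQV0}, by verifying that each of the Assumptions~\ref{op:L}--\ref{Phi} holds for the present data. I set $Lu=\Delta u$, so that $L$ is a linear operator of order $s=2$ with $m=\ell=1$, and $\bs V_p=\{u\in L^p(\Omega):\Delta u\in L^p(\Omega)\}$, which gives Assumption~\ref{op:L}. For Assumption~\ref{op:a} I take $\bs a(\bs\xi)=\alpha\,|\bs\xi|^{p-2}\bs\xi$ with $0\le\alpha\in L^\infty(Q_T)$ and $\bs b\equiv\bs 0$: the growth bound \eqref{zero} holds with $a^*=\|\alpha\|_{L^\infty(Q_T)}$, the monotonicity \eqref{op:a:prop:c} is the classical monotonicity inequality for the $p$-Laplacian vector field, and the two conditions on $\bs b$ are trivially satisfied. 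Assumption~\ref{Phi} is part of the standing hypotheses of this subsection.

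The substantive point is Assumption~\ref{Xp}, namely that $\X_p=W^{2,p}_0(\Omega)$ is a closed subspace of $\bs V_p$ on which $\|v\|_{\X_p}=\|\Delta v\|_{L^p(\Omega)}$ is a norm equivalent to the graph norm. The bound $\|\Delta v\|_{L^p(\Omega)}\le\|v\|_{\bs V_p}$ is immediate; the reverse inequality $\|v\|_{W^{2,p}(\Omega)}\le C\,\|\Delta v\|_{L^p(\Omega)}$ for $v\in W^{2,p}_0(\Omega)$ is exactly the global Calder\'on--Zygmund $L^p$ estimate for the homogeneous Dirichlet problem for the Laplacian. This is where the $\C^{1,1}$ regularity of $\partial\Omega$ is used, and it is the only nontrivial analytic ingredient: for $p=2$ it reduces to the classical $H^2$ elliptic estimate, while for general $p\in(1,\infty)$ it is the $W^{2,p}$ regularity theory on $\C^{1,1}$ domains. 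Closedness of $\X_p$ in $\bs V_p$ then follows from the completeness of $W^{2,p}_0(\Omega)$ together with this norm equivalence.

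For Assumption~\ref{gelfand} I take $\HH=L^2(\Omega)$ and must check that the embedding $W^{2,p}_0(\Omega)\hookrightarrow L^2(\Omega)$ is compact, which also supplies the inclusion $\X_p\subset L^2(\Omega)$ required in Assumption~\ref{Xp}; the Gelfand triple $(\X_p,\HH,\X_p')$ is then standard. By Rellich--Kondrachov this compactness holds precisely when the Sobolev conjugate of $W^{2,p}$ strictly exceeds $2$: when $2p<d$ one has $W^{2,p}(\Omega)\hookrightarrow L^{p^{**}}(\Omega)$ with $p^{**}=\frac{dp}{d-2p}$ and the embedding into $L^q$ is compact for every $q<p^{**}$, so the condition $p^{**}>2$ is equivalent to $p>\frac{2d}{d+4}$, while for $2p\ge d$ the embedding into $L^2$ is compact for every admissible $p$. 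Together with $p>1$ this yields the stated restriction $p>\max\{1,\frac{2d}{d+4}\}$.

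With Assumptions~\ref{op:L}--\ref{Phi} verified and $f\in L^2(Q_T)$, $u_0\in\K_{G[u_0]}$, Theorem~\ref{IQV0} applies and produces a weak solution $u\in\V_p\cap L^\infty\big(0,T;L^2(\Omega)\big)$ of the quasi-variational inequality \eqref{iqv}, which, specialised to $Lu=\Delta u$, $\bs a(\bs\xi)=\alpha\,|\bs\xi|^{p-2}\bs\xi$ and $\bs b\equiv\bs 0$, is exactly the inequality in the statement. The main obstacle is thus entirely contained in the elliptic regularity estimate underlying Assumption~\ref{Xp}; everything else is a matter of matching the abstract hypotheses to the concrete choices.
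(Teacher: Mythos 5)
Your proposal is correct and follows essentially the same route as the paper: both reduce the Corollary to Theorem~\ref{IQV0} by taking $Lu=\Delta u$, $\X_p=W^{2,p}_0(\Omega)$ normed by $\|\Delta v\|_{L^p(\Omega)}$ (whose equivalence to the usual $W^{2,p}$ norm is exactly the Calder\'on--Zygmund isomorphism property of $\Delta$ on a $\C^{1,1}$ domain), and $\HH=L^2(\Omega)$, with the compact embedding $W^{2,p}_0(\Omega)\hookrightarrow L^2(\Omega)$ holding precisely under $p>\max\big\{1,\frac{2d}{d+4}\big\}$. You merely spell out the Rellich--Kondrachov exponent computation and the verification of Assumption~\ref{op:a} that the paper leaves implicit.
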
	
	
Here we choose  $V_p=\{v\in L^p(\Omega):\Delta v\in L^p(\Omega)\}$, i.e. the operator $L$ is the Laplacian. The subspace $\X_p=W^{2,p}_0(\Omega)$ is endowed with the norm
$$\|v\|_{\X_p}=\|\Delta u\|_{ L^p(\Omega)}$$
which is  equivalent to the usual norm of $W^{2,p}(\Omega)$ because $\Delta$ is an isomorphism between 
$\X_p$ and $L^p(\Omega)$. Besides, $(\X_p, L^2(\Omega),  \X_p')$ is a Gelfand triple and the inclusion $\X_p\subset L^2(\Omega)$ is compact because $p>\max\big\{1,\frac{2d}{d+4}\big\}$. 

\subsubsection{\bf {\footnotesize A problem with curl constraint}}$ $

\begin{coro}
Let $\Omega$ be a bounded open subset of $\R^3$ with a Lipschitz boundary,	$p>\frac65$, $f\in L^2(Q_T)$. 
Define 
$$\X_p=\big\{\bs v\in L^p(\Omega)^3:\nabla\times\bs v\in L^p(\Omega)^3,
\nabla\cdot\bs v=0,\,\bs v\cdot\bs n_{_{|\partial\Omega}}=\,0\big\}$$
or
$$\X_p=\big\{\bs v\in L^p(\Omega)^3:\nabla\times\bs v\in L^p(\Omega)^3, \nabla\cdot\bs v=0,\,\bs v\times\bs n_{_{|\partial\Omega}}=\,\bs 0\big\}.$$
	
If $\bs u_0\in\K_{G[\bs u_0]}$, the following quasi-variational inequality	has a weak solution 
\begin{equation*}
\left\{\begin{array}{l} 
\bs u\in\K_{G[\bs u]},\vspace{3mm}\\
\displaystyle\int_0^T\langle\partial_t \bs v, \bs v- \bs u\rangle_p+\int_{Q_T}\alpha\,  |\nabla\times\bs u|^{p-2}\nabla\times\bs v\cdot\nabla\times(\bs v-\bs v)\vspace{3mm}\\
\hfill{\hspace{3cm}\ge\displaystyle\int_{Q_T}\bs  f\cdot(\bs v-\bs u)-\frac12\int_\Omega|\bs v(0)-\bs u_0|^2\quad\forall\,\bs  v\in \Y_p\text{ such that }
\bs	v\in\K_{G[\bs u]}.}
\end{array}
\right.
\end{equation*}
\hfill{$\square$}
\end{coro}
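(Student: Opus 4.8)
The plan is to obtain this result as a direct application of Theorem~\ref{IQV0}, by verifying that the choices $L\bs u=\Rot\bs u$, $\bs a(\bs\xi)=\alpha\,|\bs\xi|^{p-2}\bs\xi$ and $\bs b\equiv\bs0$, together with either of the two candidate spaces $\X_p$, fulfil Assumptions~\ref{op:L}--\ref{gelfand}; Assumption~\ref{Phi} on $G$ is granted by hypothesis. Here $d=m=\ell=3$ and $\bs V_p=\{\bs v\in L^p(\Omega)^3:\Rot\bs v\in L^p(\Omega)^3\}$ endowed with the graph norm, so that $L=\Rot$ is a first order ($s=1$) linear operator with constant coefficients and Assumption~\ref{op:L} holds at once. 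Assumption~\ref{op:a} is then routine for this $p$-Laplacian type field: the bound $|\bs a(\bs\xi)|=\alpha|\bs\xi|^{p-1}\le a^*|\bs\xi|^{p-1}$ gives \eqref{zero} with $a^*=\|\alpha\|_{L^\infty(Q_T)}$, the classical monotonicity inequality for $\bs\xi\mapsto|\bs\xi|^{p-2}\bs\xi$ yields \eqref{op:a:prop:c}, and since $\bs b\equiv\bs0$ conditions \eqref{op:a:prop:b} and \eqref{op:c:prop:c} are trivially satisfied.

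The substantive point is the verification of Assumptions~\ref{Xp} and~\ref{gelfand}, which rests on the theory of div--curl systems. First I would observe that both candidate spaces are closed subspaces of $\bs V_p$, being cut out by the closed conditions $\nabla\cdot\bs v=0$ and the (trace) boundary condition $\bs v\cdot\bs n=0$ or $\bs v\times\bs n=\bs0$ on $\partial\Omega$. The key estimate is a Poincar\'e inequality for the curl, $\|\bs v\|_{L^p(\Omega)^3}\le C\|\Rot\bs v\|_{L^p(\Omega)^3}$ on $\X_p$, which makes $\|\bs v\|_{\X_p}=\|\Rot\bs v\|_{L^p(\Omega)^3}$ a norm equivalent to the graph norm; this follows from the div--curl regularity estimates, which embed $\X_p$ into $W^{1,p}(\Omega)^3$ on smooth domains and into a space of reduced (fractional) regularity on merely Lipschitz domains. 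Taking for $\HH$ the corresponding space of $L^2$ divergence-free fields satisfying the same boundary condition, the triple $(\X_p,\HH,\X_p')$ is a Gelfand triple, and the embedding $\X_p\hookrightarrow\hookrightarrow L^2(\Omega)^3$ is compact precisely because $p>\tfrac{6}{5}=\tfrac{2d}{d+2}$ with $d=3$ is the Sobolev threshold guaranteeing compactness into $L^2$; this is Assumption~\ref{gelfand}.

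I expect the main obstacle to be exactly this functional-analytic step on a merely Lipschitz domain, where $\X_p$ need not embed into $W^{1,p}(\Omega)^3$ and one must invoke the sharper compactness results for the $L^p$ div--curl problem rather than a plain Rellich--Kondrachov argument. A related delicacy is ensuring that $\|\Rot\,\cdot\,\|_{L^p(\Omega)^3}$ is genuinely a norm on $\X_p$ and not merely a seminorm: this requires the space of harmonic fields (those with vanishing curl and divergence and the prescribed boundary data) to be trivial, which holds under the implicit topological restrictions on $\Omega$, and otherwise one restricts to the orthogonal complement of that finite-dimensional cohomology space. Once Assumptions~\ref{op:L}--\ref{gelfand} are in force, Theorem~\ref{IQV0} applies verbatim and delivers a weak quasi-variational solution $\bs u\in\V_p\cap L^\infty\big(0,T;L^2(\Omega)^3\big)$, which is the assertion of the corollary.
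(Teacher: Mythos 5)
Your proposal follows essentially the same route as the paper: both verify Assumptions~\ref{op:L}--\ref{gelfand} for $L=\Rot$ with $\bs a(\bs\xi)=\alpha|\bs\xi|^{p-2}\bs\xi$, $\bs b\equiv\bs 0$, and then invoke Theorem~\ref{IQV0}, the functional-analytic core being the div--curl results of \cite{AmroucheSeloula2010} giving the equivalence of $\|\Rot\,\cdot\,\|_{L^p(\Omega)^3}$ with the $W^{1,p}(\Omega)^3$-norm on $\X_p$ and the compact embedding into $\HH=\{\bs v\in L^2(\Omega)^3:\nabla\cdot\bs v=0\}$ precisely because $p>\frac{2d}{d+2}=\frac65$. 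The only difference is that you make explicit the topological caveat (triviality of the harmonic fields so that the curl seminorm is a norm) and the Lipschitz-regularity issue, which the paper leaves implicit in its citation of \cite{AmroucheSeloula2010}.
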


Here $L\bs v=\nabla\times\bs v$ and $\V_p=\{\bs v\in L^p(\Omega)^3:\nabla\times\bs v\in L^p(\Omega)^3\}$. In both choices of $\X_p$, corresponding to different boundary conditions, it is well known that $\X_p$ is a closed subspace of $W^{1,p}(\Omega)^3$ and that the semi-norm $\|\Rot\,\cdot\,\|_{L^p(\Omega)^3}$ is a norm equivalent to the one induced in $\X_p$ by the usual norm in $W^{1,p}(\Omega)^3$ (for details see \cite{AmroucheSeloula2010}). 
Here $\X_p$ is compactly embedded in $\HH=\{\bs v\in L^2(\Omega)^3:\nabla\cdot\bs v=0\}$.

This model is related to the Bean-type superconductivity variational inequality, which was solved in \cite{MirandaRodriguesSantos2012}, with prescribed critical threshold $G$. If we let here this threshold be, for instance, dependent on the temperature $\zeta$ defined by \eqref{p1}-\eqref{p2} and we impose $p>\frac52$, we obtain the existence of a weak solution to the corresponding thermal and electromagnetic coupled problem.

\subsubsection{\bf {\footnotesize Non-Newtonian thick fluids - a problem with a constraint on $ D$}}$ $
 
Denote 
$$ D\bs u=\tfrac12(\nabla \bs u+\nabla\bs u^T),$$
$$V_p=\big\{\bs v\in L^p(\Omega)^d:\, D\bs v\in L^p(\Omega)^{d^2}\big\},\qquad\mathbb J=\big\{\bs v\in \D(\Omega)^d:\nabla\cdot\bs v=0\big\}$$
and
$$\X_p=\overline{\mathbb J}^{W^{1,p}(\Omega)^d},\ \text{ for }p>1,\ d\ge2.$$
Let
$\V_p=L^p(0,T;V_p)$  and observe that $\X_p$ is compactly embedded in $\HH=\{\bs v\in L^2(\Omega)^d:\nabla\cdot\bs v=0\}=\overline{\mathbb J}^{L^2(\Omega)^d}$, if $p>\frac{2d}{d+2}$, by Sobolev and Korn inequalities. Hence, using the results of \cite{Rodrigues2014} and  \cite{MirandaRodrigues2016} for the variational inequality for incompressible thick fluids in the simpler case of the Stokes flow, we obtain the following conclusion:
\begin{coro} Let $\Omega$ be a bounded open subset of $\R^d$ with a Lipschitz boundary, $d\ge2$,
 $p>\frac{2d}{d+2}$,  $\bs f\in L^2(Q_T)^d$ and  $\bs u_0\in\K_{G[\bs u_0]}$. Then the following quasi-variational inequality
\begin{equation*}
\left\{\begin{array}{l} 
\bs u\in\K_{G[\bs u]},\vspace{3mm}\\
\displaystyle\int_0^T\langle\partial_t \bs v, \bs v- \bs u\rangle_p+\int_{Q_T}\alpha\,|D\bs u|^{p-2}  D\bs u\cdot D(\bs v-\bs u)	\ge\displaystyle\int_{Q_T}\bs  f\cdot(\bs v-\bs u)-\frac12\int_\Omega|\bs v(0)-\bs u_0|^2,\vspace{3mm}\\
\hfill{\forall\,\bs  v\in \Y_p\text{ such that }\bs	v\in\K_{G[\bs u]}}
\end{array}
\right.
\end{equation*}
has a weak solution.
\hfill{$\square$}
\end{coro}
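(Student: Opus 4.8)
My plan is to obtain this corollary as a direct application of Theorem~\ref{IQV0}, so that essentially all the work consists in checking that the concrete data satisfy Assumptions~\ref{op:L}--\ref{Phi}; once these are verified the conclusion is immediate. I would first fix the dictionary: take $L\bs u=D\bs u=\tfrac12(\nabla\bs u+\nabla\bs u^T)$, so that $m=d$, $\ell=d^2$ and the order is $s=1$; the vector fields are $\bs a(x,t,\bs\xi)=\alpha(x,t)|\bs\xi|^{p-2}\bs\xi$ and $\bs b\equiv\bs0$; and the spaces are $V_p=\{\bs v\in L^p(\Omega)^d:D\bs v\in L^p(\Omega)^{d^2}\}$, $\X_p=\overline{\mathbb J}^{\,W^{1,p}(\Omega)^d}$ and $\HH=\overline{\mathbb J}^{\,L^2(\Omega)^d}=\{\bs v\in L^2(\Omega)^d:\nabla\cdot\bs v=0\}$. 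Assumption~\ref{Phi} on $G$ is granted by the standing hypothesis of the subsection.

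The structural Assumptions~\ref{op:L} and~\ref{op:a} are verified directly. The symmetric gradient $D$ is a first-order linear differential operator with constant (hence $L^\infty$) coefficients mapping $V_p$ into $L^p(\Omega)^{d^2}$, so~\ref{op:L} holds. For $\bs a$, measurability in $(x,t)$ and continuity in $\bs\xi$ give the Carath\'eodory property, the growth bound $|\bs a(x,t,\bs\xi)|=\alpha(x,t)|\bs\xi|^{p-1}\le a^*|\bs\xi|^{p-1}$ follows with $a^*=\|\alpha\|_{L^\infty(Q_T)}$, and the inequality $(\bs a(x,t,\bs\xi)-\bs a(x,t,\bs\xi'))\cdot(\bs\xi-\bs\xi')\ge0$ is the classical monotonicity of the $p$-Laplacian nonlinearity; the conditions on $\bs b$ are trivial since $\bs b\equiv\bs0$.

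The analytic heart of the verification is Assumption~\ref{Xp}. Since $\mathbb J\subset\D(\Omega)^d$ consists of compactly supported fields, its $W^{1,p}$-closure satisfies $\X_p\subset W^{1,p}_0(\Omega)^d$; the $L^p$-Korn inequality for vector fields of vanishing boundary trace then gives $\|\nabla\bs v\|_{L^p(\Omega)^{d^2}}\le C\|D\bs v\|_{L^p(\Omega)^{d^2}}$, and combining this with Poincar\'e's inequality yields $\|\bs v\|_{W^{1,p}(\Omega)^d}\le C\|D\bs v\|_{L^p(\Omega)^{d^2}}$. Hence $\|\bs v\|_{\X_p}=\|D\bs v\|_{L^p}$ is a norm on the closed subspace $\X_p$ equivalent to the one induced from $V_p$, which is precisely~\ref{Xp}. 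For Assumption~\ref{gelfand}, density of $\mathbb J$ in $\HH$ (by definition of $\HH$) together with $\mathbb J\subset\X_p\subset\HH$ makes $\X_p$ dense in $\HH$, so $(\X_p,\HH,\X_p')$ is a Gelfand triple; the Rellich--Kondrachov theorem, applied through $\X_p\subset W^{1,p}_0(\Omega)^d$, gives the compact inclusion $\X_p\hookrightarrow L^2(\Omega)^d$ exactly in the range where $2<p^*=\tfrac{dp}{d-p}$ for $p<d$ (and automatically for $p\ge d$), that is, for $p>\tfrac{2d}{d+2}$, which is the hypothesis on $p$.

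The main obstacle is thus not any limiting argument --- the compactness, penalisation and fixed-point machinery is all packaged inside Theorem~\ref{IQV0}, and, for the thick-fluid setting, in the estimates of \cite{Rodrigues2014} and \cite{MirandaRodrigues2016} --- but rather the two functional-analytic facts just invoked: the $L^p$-Korn inequality ($1<p<\infty$) underlying the norm equivalence of~\ref{Xp}, and the sharp threshold $p>\tfrac{2d}{d+2}$ that makes the Sobolev embedding into $L^2$ compact and so dictates the restriction on $p$. With Assumptions~\ref{op:L}--\ref{Phi} confirmed, Theorem~\ref{IQV0} applies and delivers a weak solution $\bs u\in\V_p\cap L^\infty\big(0,T;L^2(\Omega)^d\big)$ of the stated quasi-variational inequality.
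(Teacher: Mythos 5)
Your proposal is correct and follows essentially the same route as the paper: fix $L=D$, $\X_p=\overline{\mathbb J}^{\,W^{1,p}(\Omega)^d}$, $\HH=\overline{\mathbb J}^{\,L^2(\Omega)^d}$, obtain the norm equivalence of Assumption~\ref{Xp} and the compact embedding of Assumption~\ref{gelfand} from the Korn and Sobolev inequalities under the threshold $p>\tfrac{2d}{d+2}$, and then apply Theorem~\ref{IQV0}. Your write-up merely makes explicit the assumption-checking (Carath\'eodory property, growth and monotonicity of $\bs a$, density of $\mathbb J$ giving the Gelfand triple) that the paper compresses into the sentence ``by Sobolev and Korn inequalities.''
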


\subsubsection{\bf {\footnotesize A problem with first order vector fields constraint}}$ $

Let $\Omega\subset \R^d$, $d\ge2,$ be a connected bounded open set and $L=(X_1,\ldots,X_\ell)$ be a family of Lipschitz vector fields on $\R^d$ that connect the space. We shall assume that the regularity of $\partial\Omega$ and the structure of $L$ support the following Sobolev-Poincaré compact embedding for $p\ge2$,
\begin{equation}\label{cp2}
\X_p\hookrightarrow L^2(\Omega).
\end{equation}

This is the case of an H\"ormander operator with
$$X_j=\sum_{i=1}^d\alpha_{ij}\partial_{x_i},\qquad j=1,\ldots,\ell,$$
with $\alpha_{ij}\in\C^\infty(\overline\Omega)$ such that the Lie algebra generated by these $\ell$ vector fields has dimension $d$, when the set $\X_p$ is the closure of $\D(\Omega)$ in
$$V_p=\big\{v\in L^p(\Omega): X_jv\in L^p(\Omega), j=1,\ldots,\ell\big\},\ \text{ with }p\ge 2,$$
with the graph norm and $\partial\Omega\in\C^\infty$. 
Indeed, in this case, it is known (see \cite{DerrigDias1972}, \cite{dd} and \cite{d91}) the following extension of the Rellich-Kondratchov theorem,
$$\X_p=\overline{\D(\Omega)}^{V_p}\hookrightarrow L^2(\Omega)\ \text{ is compact for }p\ge2,$$
holds and so $(\X_p,L^2(\Omega), \X_p')$ is a Gelfand triple with compact embeddings. For other classes of vector fields, namely associated with degenerate subelliptic operators, and a characterization of domains where \eqref{cp2} holds, see for instance \cite{dd} and \cite{cdg93}. By application of Theorem \ref{IQV0} we can now conclude the following existence result:

\begin{coro} Suppose that $\Omega$ is a bounded open subset of $\R^d$ with a smooth boundary.  Under the assumption \eqref{cp2}, if $p\ge2$, $f\in L^2(Q_T)$ and $u_0\in\K_{G[u_0]}$, the quasi-variational inequality
\begin{equation*}
\left\{\begin{array}{l}
u\in\K_{G[u]},\vspace{3mm}\\
\displaystyle\int_0^T\langle\partial_t v, v- u\rangle_p+\int_{Q_T}\alpha \sum_{j=1}^\ell\Big(\sum_{i=1}^\ell  |X_iu|^2\Big)^{\frac{p-2}2}X_ju\, X_j(v-u)\vspace{3mm}\\
\hspace{3cm}\hfill{\ge\displaystyle\int_{Q_T} f( v- u)-\frac12\int_\Omega|v(0)-u_0|^2},
\quad\forall\, v\in \Y_p\text{ such that }
v\in\K_{G[\bs u]}
\end{array}
\right.
\end{equation*}
has a weak solution.
\end{coro}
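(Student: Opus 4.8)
The plan is to verify that the data of this problem satisfy Assumptions~\ref{op:L}--\ref{Phi} and then to invoke Theorem~\ref{IQV0}. Here $m=1$, the operator is $L\bs u=(X_1u,\dots,X_\ell u)$ of order $s=1$, and the standing choices of this subsection are $\bs a(x,t,\bs\xi)=\alpha(x,t)|\bs\xi|^{p-2}\bs\xi$ with $\alpha\ge0$ and $\bs b\equiv\bs0$. First I would check Assumption~\ref{op:L}: since each $X_j=\sum_{i=1}^d\alpha_{ij}\partial_{x_i}$ has coefficients $\alpha_{ij}\in\C^\infty(\overline\Omega)\subset L^\infty(\Omega)$, $L$ is a linear first-order operator with bounded coefficients mapping $V_p\to L^p(\Omega)^\ell$, endowed with the graph norm, so the assumption holds. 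For Assumption~\ref{op:a}, with $\alpha$ bounded the growth bound \eqref{zero} holds with $a^*=\|\alpha\|_{L^\infty(Q_T)}$, while the monotonicity \eqref{op:a:prop:c} is the standard monotonicity of the $p$-Laplacian field $\bs\xi\mapsto|\bs\xi|^{p-2}\bs\xi$, preserved under multiplication by the nonnegative factor $\alpha$; since $\bs b\equiv\bs0$, the conditions \eqref{op:a:prop:b} and \eqref{op:c:prop:c} are trivial.

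Next I would address Assumptions~\ref{Xp} and \ref{gelfand}, taking $\HH=L^2(\Omega)$. The space $\X_p=\overline{\D(\Omega)}^{V_p}$ is by construction a closed subspace of $V_p$, and the compact inclusion \eqref{cp2} gives in particular $\X_p\subset L^2(\Omega)$. The two analytic inputs needed are the subelliptic Poincar\'e inequality and the compactness \eqref{cp2}, both consequences of the bracket-generating (H\"ormander) condition that the Lie algebra generated by $X_1,\dots,X_\ell$ has full dimension $d$; these are exactly the extensions of the Rellich--Kondratchov theorem cited above. The Poincar\'e inequality yields that $\|\bs v\|_{\X_p}=\|L\bs v\|_{L^p(\Omega)^\ell}$ is a norm on $\X_p$ equivalent to the graph norm, which is Assumption~\ref{Xp}, and \eqref{cp2} yields that $(\X_p,L^2(\Omega),\X_p')$ is a Gelfand triple with $\X_p\hookrightarrow\HH$ compact for $p\ge2$, which is Assumption~\ref{gelfand}.

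Finally, Assumption~\ref{Phi} is imposed directly on $G$ in the standing hypotheses of this subsection, so all of Assumptions~\ref{op:L}--\ref{Phi} hold. Together with $f\in L^2(Q_T)$ and $u_0\in\K_{G[u_0]}$, Theorem~\ref{IQV0} then provides a weak solution $u\in\V_p\cap L^\infty\big(0,T;L^2(\Omega)\big)$. Writing $\bs a(Lu)=\alpha|Lu|^{p-2}Lu$ explicitly and using $|Lu|^2=\sum_{i=1}^\ell|X_iu|^2$, the abstract inequality \eqref{iqv} becomes precisely the stated quasi-variational inequality.

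The step I expect to be the \emph{main obstacle} is the pair of subelliptic estimates underlying Assumptions~\ref{Xp} and \ref{gelfand} --- the Poincar\'e inequality and the compact embedding \eqref{cp2} --- since these are genuinely nontrivial facts about H\"ormander systems that depend on the geometry of $\partial\Omega$ and on the bracket-generating structure of the vector fields. In the present framework, however, they are available as the cited extensions of the Rellich--Kondratchov theorem and are thus used as black boxes, so the remainder of the argument is the routine verification of hypotheses described above.
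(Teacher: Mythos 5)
Your proposal is correct and takes essentially the same route as the paper: the paper also obtains this corollary by verifying Assumptions~\ref{op:L}--\ref{Phi} for $L=(X_1,\dots,X_\ell)$, $\bs a(\bs\xi)=\alpha|\bs\xi|^{p-2}\bs\xi$, $\bs b\equiv\bs 0$, invoking the cited subelliptic extensions of the Rellich--Kondratchov theorem (together with the Poincar\'e inequality assumed in the standing functional framework) to get the Gelfand triple $\big(\X_p,L^2(\Omega),\X_p'\big)$ with compact inclusion \eqref{cp2}, and then applying Theorem~\ref{IQV0}. The only difference is presentational: the paper leaves the routine verifications implicit, whereas you spell them out, correctly identifying the subelliptic Poincar\'e inequality and compactness as the sole nontrivial inputs, used as black boxes in both arguments.
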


\section{The approximated problem}
\label{app-section}

In order to establish the existence of solution to the quasi-variational inequality~\eqref{iqv},  we start by proving existence of the solution 
to the problem of an approximated system of equations, defined for fixed $\bs\varphi\in\HHH$, $\delta\in(0,1)$ and $\eps\in(0,1)$. With this regularisation and penalisation of the quasi-variational inequality \eqref{iqv} with convex sets $\K_{G[\bs \varphi](t)}$, $t\in[0,T]$, we apply a fixed point argument. 
Consider the following increasing continuous function $k_{\eps}:\R\to\R^+_0$:
\begin{equation}\label{kapa_eps}
k_\varepsilon(s) = 
\begin{cases}
0 &\text{ if } s\leq0,\\
e^\frac{s}{\varepsilon}-1&\text{ if }0\leq s\leq \frac1\varepsilon,\\
e^\frac{1}{\varepsilon^2}-1 &\text{ if } s\geq\frac1\varepsilon.
\end{cases}
\end{equation}
Observe that the function $k_{\eps\delta}=\delta+k_\eps$ approximates the maximal monotone graph
$$\overline k_\delta(s)\in\begin{cases}
\{\delta\}&\text{ if }s<0,\\
[\delta,\infty[&\text{ if }s=0.
\end{cases}
$$

We start with an auxiliary lemma.

\begin{lemma}\label{monot*} Let $\psi$ be a scalar real function defined in $Q_T$. Then, for  $p\in(1,\infty)$, the operator 
\begin{equation}\label{monotone}
\bs T_{\eps}(x,t,\bs \xi)=k_\eps\big(|\bs \xi|-\psi(x,t)\big)|\bs \xi|^{p-2}\bs \xi
\end{equation}
is monotone.
\end{lemma}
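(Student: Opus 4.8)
The plan is to show that $\bs T_\eps$ is monotone by verifying directly that
\[
\bigl(\bs T_\eps(x,t,\bs\xi)-\bs T_\eps(x,t,\bs\xi')\bigr)\cdot(\bs\xi-\bs\xi')\ge 0
\]
for all $\bs\xi,\bs\xi'\in\R^\ell$ and a.e.\ $(x,t)\in Q_T$. Since $x,t$ enter only through the scalar $\psi(x,t)$, I would fix the point and abbreviate $\psi=\psi(x,t)$, writing $\bs T_\eps(\bs\xi)=k_\eps(|\bs\xi|-\psi)\,|\bs\xi|^{p-2}\bs\xi$. The key structural observation is that $\bs T_\eps$ is of the form $h(|\bs\xi|)\,\bs\xi$, with the scalar radial factor
\[
h(r)=k_\eps(r-\psi)\,r^{p-2},\qquad r=|\bs\xi|\ge 0 .
\]

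First I would record the two facts that will do all the work. On the one hand, $r\mapsto k_\eps(r-\psi)$ is nondecreasing, because $k_\eps$ is a nondecreasing function (as is clear from \eqref{kapa_eps}: it is $0$ on $(-\infty,0]$, equals $e^{s/\eps}-1$ which increases on $[0,\tfrac1\eps]$, and is constant thereafter); on the other hand $r\mapsto r^{p-2}$, although not monotone for $p<2$, combines with the first factor in a favourable way once we expand the inner product. Indeed, expanding gives
\[
\bigl(\bs T_\eps(\bs\xi)-\bs T_\eps(\bs\xi')\bigr)\cdot(\bs\xi-\bs\xi')
= h(|\bs\xi|)|\bs\xi|^2 + h(|\bs\xi'|)|\bs\xi'|^2
-\bigl(h(|\bs\xi|)+h(|\bs\xi'|)\bigr)\,\bs\xi\cdot\bs\xi' .
\]
Using the Cauchy--Schwarz bound $\bs\xi\cdot\bs\xi'\le |\bs\xi|\,|\bs\xi'|$ together with the fact that $h(|\bs\xi|)+h(|\bs\xi'|)\ge 0$ (both summands are nonnegative, since $k_\eps\ge 0$ and $r^{p-2}\ge 0$), I can bound the cross term and reduce the inequality to the one-dimensional statement
\[
h(r)\,r^2 + h(\rho)\,\rho^2 - \bigl(h(r)+h(\rho)\bigr)\,r\rho \ge 0,
\quad\text{i.e.}\quad
\bigl(r\,h(r)-\rho\,h(\rho)\bigr)(r-\rho)\ge 0,
\]
for $r,\rho\ge 0$. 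This last inequality is precisely the assertion that the scalar function $r\mapsto r\,h(r)=k_\eps(r-\psi)\,r^{p-1}$ is nondecreasing on $[0,\infty)$.

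So the whole lemma collapses to checking that $\phi(r):=k_\eps(r-\psi)\,r^{p-1}$ is nondecreasing in $r\ge 0$, which is immediate since it is a product of two nonnegative nondecreasing functions of $r$ (note $p-1>0$, so $r^{p-1}$ is increasing, and $k_\eps(r-\psi)\ge 0$ is nondecreasing). I expect the main obstacle to be purely bookkeeping: one must justify passing from the vector inequality to the scalar one when $p<2$, where $r^{p-2}$ blows up as $r\to 0^+$; this is harmless because the factor $k_\eps(r-\psi)$ forces $h$ to vanish for $r\le\psi$ (and in particular near $r=0$ whenever $\psi>0$), and in any event the product $r\,h(r)=k_\eps(r-\psi)r^{p-1}$ stays bounded and nondecreasing, so no singularity survives. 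I would therefore present the proof as: reduce to the radial scalar inequality $\bigl(rh(r)-\rho h(\rho)\bigr)(r-\rho)\ge0$ via Cauchy--Schwarz, then observe $r\mapsto k_\eps(r-\psi)r^{p-1}$ is a product of nonnegative nondecreasing functions, whence monotone.
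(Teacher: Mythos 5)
Your proof is correct, but it follows a genuinely different route from the paper's. The paper splits the difference as
\begin{equation*}
\bs T_\eps(\bs\xi)-\bs T_\eps(\bs\xi')
= k_\eps(|\bs\xi'|-\psi)\big(\bs S(\bs\xi)-\bs S(\bs\xi')\big)
+\big(k_\eps(|\bs\xi|-\psi)-k_\eps(|\bs\xi'|-\psi)\big)\bs S(\bs\xi),
\qquad \bs S(\bs\xi)=|\bs\xi|^{p-2}\bs\xi,
\end{equation*}
assuming without loss of generality $|\bs\xi|\ge|\bs\xi'|$; the first term is nonnegative against $\bs\xi-\bs\xi'$ by the \emph{known} monotonicity of the $p$-Laplacian field $\bs S$ together with $k_\eps\ge0$, and the second is handled by Cauchy--Schwarz ($\bs\xi\cdot(\bs\xi-\bs\xi')\ge|\bs\xi|(|\bs\xi|-|\bs\xi'|)$) and the monotonicity of $k_\eps$. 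You instead avoid any splitting: you treat $\bs T_\eps$ as a radial field $h(|\bs\xi|)\bs\xi$ and reduce, via Cauchy--Schwarz applied once to the cross term, to the scalar statement that $r\mapsto r\,h(r)=k_\eps(r-\psi)\,r^{p-1}$ is nondecreasing, which holds because it is a product of nonnegative nondecreasing functions. What your argument buys is self-containedness and generality: it does not cite monotonicity of $\bs S$ as an input (indeed, taking $k_\eps\equiv1$ your argument re-proves it), and it establishes the general principle that any field $h(|\bs\xi|)\bs\xi$ with $h\ge0$ and $rh(r)$ nondecreasing is monotone. What the paper's splitting buys is brevity modulo a standard citation. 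One small polish for your write-up: at $\bs\xi=\bs0$ with $p<2$ and $\psi\le0$ the factor $h(0)$ is formally infinite, so it is cleaner to define the field directly through the bounded profile, $\bs T_\eps(\bs\xi)=\phi(|\bs\xi|)\,\bs\xi/|\bs\xi|$ for $\bs\xi\ne\bs0$ and $\bs T_\eps(\bs0)=\bs0$, with $\phi(r)=k_\eps(r-\psi)r^{p-1}$; you anticipate exactly this issue and your resolution (only $rh(r)$ ever appears) is the right one.
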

\begin{proof} To simplify, we omit the argument $(x,t)$ and we denote $k_\eps(|\bs\xi|-\psi)$ simply by $k_\eps(|\bs\xi|)$.
We may assume, without loss of generality, that $|\bs\xi|\ge|\bs\xi'|$. Because $\bs S(\bs\xi)=|\bs\xi|^{p-2}\bs\xi$ is monotone and $k_\eps$ is a nonnegative increasing function, we have
\begin{multline*}
\big(\bs T_\eps(\bs\xi)-\bs T_\eps(\bs\xi')\big)\cdot(\bs\xi-\bs\xi')
=
\big(\,k_\eps(|\bs\xi|)\bs S(\bs\xi)-
k_\eps(|\bs\xi'|)\bs S(\bs\xi')\,\big)
\cdot(\bs\xi-\bs\xi')\\
=
k_\eps(|\bs\xi'|)\big(\bs S(\bs\xi)-
\bs S(\bs\xi')\big)\cdot(\bs\xi-\bs\xi')
+
\big(\, k_\eps(|\bs\xi|-k_\eps(|\bs\xi'|\,\big))|\bs\xi|^{p-2}\bs\xi\cdot(\bs\xi-\bs\xi')\\
\ge \big(\, k_\eps(|\bs\xi|-k_\eps(|\bs\xi'|)\,\big)\,|\bs\xi|^{p-1}\,(|\bs\xi|-|\bs\xi'|)\ge0.
\end{multline*}
\end{proof}

\begin{prop}\label{existence-iv} Suppose that Assumptions \ref{op:L} to  \ref{gelfand} are satisfied.
Considering functions 
\begin{equation}\label{assump1}
\bs f\in L^{2}(Q_T)^m,\quad \bs\varphi\in \HHH\quad\text{and}\quad \bs u_0\in \K_{G[\bs\varphi](0)},
\end{equation} 
the problem that consists of finding $\bs u_{\varepsilon\delta,\bs \varphi}$ such that
\begin{equation}\label{app}
\left\{\begin{array}{l}
\displaystyle\langle\partial_t\bs u_{\varepsilon\delta,\bs\varphi}(t),\bs\psi\rangle_p
+\int_\Omega \bs a(t,L\bs u_{\eps\delta,\bs\varphi}(t))\cdot L\bs\psi+\int_\Omega \bs b(t,\bs u_{\eps\delta,\bs\varphi}(t))\cdot \bs\psi\vspace{3mm}\\
\hspace{2cm}\hfill{+\displaystyle\int_\Omega\big(\delta+ k_\varepsilon\big(|L \bs u_{\eps\delta,\bs\varphi}(t)| -G[\bs\varphi](t)\big)\big) \left|L\bs u_{\eps\delta,\bs\varphi}(t)\right|^{p-2}L \bs u_{\eps\delta,\bs\varphi}(t)\cdot L\bs\psi}\vspace{3mm}\\
\hfill{\displaystyle=\int_\Omega\bs f(t)\cdot\bs\psi,\qquad\forall \bs\psi\in\X_p,\quad\text{for a.e. }t\in(0,T),}\vspace{3mm}\\
\bs u_{\eps\delta,\bs\varphi}(0) =\bs u_0
\end{array}
\right.
\end{equation}	
has a unique solution $\bs u_{\varepsilon\delta,\bs\varphi}\in \V_p$, with $\partial_t\bs u_{\eps\delta,\bs\varphi}\in\V_p'$, i.e.\ $\bs u_{\eps\delta,\varphi}\in\Y_p\subset\C\big([0,T];\HH\big)$.
\end{prop}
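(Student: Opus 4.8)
The plan is to read \eqref{app} as the abstract Cauchy problem $\partial_t\bs u+\mathcal{A}_{\eps\delta}(t)\bs u=\bs f$, $\bs u(0)=\bs u_0$, governed, for the fixed data $\bs\varphi,\eps,\delta$, by the time-dependent family $\mathcal{A}_{\eps\delta}(t):\X_p\to\X_p'$ defined by
\[
\langle\mathcal{A}_{\eps\delta}(t)\bs u,\bs\psi\rangle_p=\int_\Omega\bs a(t,L\bs u)\cdot L\bs\psi+\int_\Omega\bs b(t,\bs u)\cdot\bs\psi+\int_\Omega\big(\delta+k_\eps(|L\bs u|-G[\bs\varphi](t))\big)|L\bs u|^{p-2}L\bs u\cdot L\bs\psi,
\]
and then to invoke the classical existence theory for evolution equations driven by bounded, coercive, monotone and hemicontinuous operators (see \cite{Lions1969} or \cite{Roubicek2013}). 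First I would check that $\mathcal{A}_{\eps\delta}$ maps $\V_p$ into $\V_p'$ and is bounded: the growth bound \eqref{zero} gives $\bs a(t,L\bs u)\in L^{p'}(\Omega)^\ell$, the boundedness $0\le k_\eps\le e^{1/\eps^2}-1$ makes the penalisation term of the same $p$-Laplacian type and hence $L^{p'}$-valued, while the growth \eqref{op:a:prop:b} together with the continuous inclusion $\X_p\hookrightarrow L^2(\Omega)^m$ (Assumptions \ref{Xp} and \ref{gelfand}) controls the lower-order term $\bs b(t,\bs u)$.

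The two structural properties I rely on are monotonicity and coercivity. Monotonicity of $\mathcal{A}_{\eps\delta}(t)$ follows by summing the monotone pieces: \eqref{op:a:prop:c} for $\bs a$, \eqref{op:c:prop:c} for $\bs b$, the usual monotonicity of $\bs\xi\mapsto|\bs\xi|^{p-2}\bs\xi$ for the $\delta$-term, and Lemma \ref{monot*}, applied with $\psi=G[\bs\varphi](t)$, for the $k_\eps$-penalisation. Coercivity is precisely what the $\delta$-regularisation provides: since the growth bounds force $\bs a(x,t,\bs0)=\bs0$ and $\bs b(x,t,\bs0)=\bs0$, monotonicity yields $\int_\Omega\bs a(t,L\bs u)\cdot L\bs u\ge0$ and $\int_\Omega\bs b(t,\bs u)\cdot\bs u\ge0$, the $k_\eps$-term is nonnegative, and the $\delta$-term gives $\langle\mathcal{A}_{\eps\delta}(t)\bs u,\bs u\rangle_p\ge\delta\|L\bs u\|_{L^p(\Omega)^\ell}^p=\delta\|\bs u\|_{\X_p}^p$, which is coercive for $p>1$. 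Hemicontinuity of $\lambda\mapsto\langle\mathcal{A}_{\eps\delta}(t)(\bs u+\lambda\bs w),\bs v\rangle_p$ is then routine from the Carath\'eodory property of $\bs a,\bs b$, the continuity of $k_\eps$, and dominated convergence.

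With these properties the abstract theorem produces a solution $\bs u_{\eps\delta,\bs\varphi}\in\V_p$ satisfying \eqref{app} and the initial condition in the sense of \eqref{embed}, noting $\bs u_0\in\K_{G[\bs\varphi](0)}\subset\X_p\subset\HH$. Uniqueness follows from monotonicity alone: subtracting the equations for two solutions, testing with their difference, and using the identity $\langle\partial_t(\bs u_1-\bs u_2),\bs u_1-\bs u_2\rangle_p=\tfrac12\tfrac{d}{dt}\|\bs u_1-\bs u_2\|_{\HH}^2$ together with $\langle\mathcal{A}_{\eps\delta}(t)\bs u_1-\mathcal{A}_{\eps\delta}(t)\bs u_2,\bs u_1-\bs u_2\rangle_p\ge0$ and $\bs u_1(0)=\bs u_2(0)$ gives $\|\bs u_1(t)-\bs u_2(t)\|_{\HH}^2\le0$ for all $t$.

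I expect the main obstacle to be the bookkeeping of the forcing and lower-order terms in the correct dual space, especially when $p<2$, where $L^2(Q_T)$ does not embed directly into $\V_p'$; this is handled through the $\HH$-valued pairing of the Gelfand triple $(\X_p,\HH,\X_p')$, treating $\bs f$ as an $L^2$-forcing in the energy identity. Alternatively, one can run a Galerkin scheme in an $\HH$-orthonormal basis of $\X_p$, derive the a priori bounds in $L^\infty(0,T;\HH)\cap\V_p$ from the coercivity above, extract limits via Aubin--Lions compactness (Assumption \ref{gelfand}), and identify the nonlinear limit by a Minty--Browder argument; that identification step is the delicate part of this alternative route, whereas the direct monotone-operator theorem bypasses it.
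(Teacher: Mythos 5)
Your proposal is correct and takes essentially the same approach as the paper: the paper's entire proof is the citation of a general existence and uniqueness theorem for parabolic quasilinear equations of monotone type (\cite[Theorem 8.9, p.~224 or Theorem 8.30, p.~243]{Roubicek2013}), which is exactly the abstract result whose hypotheses (boundedness, monotonicity via Lemma~\ref{monot*} plus the monotone pieces of $\bs a$, $\bs b$ and the $\delta$-term, coercivity from the $\delta$-regularisation, and hemicontinuity) you verify explicitly before invoking it. Your remark about the forcing term when $p<2$, resolved through the Galerkin/energy-identity route, addresses a point the paper leaves implicit in the citation but does not alter the method.
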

\begin{proof}
The existence and uniqueness of solution of problem \eqref{app} is consequence of a general result for parabolic quasilinear operators of monotone type (see, for instance, \cite[Theorem 8.9, p.~224 or Theorem 8.30, p.~243]{Roubicek2013}).
\end{proof}

\begin{prop} \label{propXp} Suppose that   Assumptions \ref{op:L} to \ref{gelfand} are satisfied.
Under the assumption~\eqref{assump1}, the solution $\bs u_{\varepsilon\delta,\bs\varphi}$ of the problem \eqref{app} verifies the following a priori estimates
\begin{align}
\label{est1}
\left\|\bs u_{\varepsilon\delta,\bs\varphi}\right\|_{L^\infty(0,T;L^2(\Omega)^m)}
& \leq C,\\
\label{est2}
\left\|L \bs u_{\varepsilon\delta,\bs\varphi}\right\|_{L^p(Q_T)^\ell}
&\le \tfrac{C}{\delta^\frac1p},\\
\label{est4}
\left\|\partial_t\bs u_{\varepsilon\delta,\bs\varphi}\right\|_{(\V_p)'}
& \leq \tfrac{C_\eps}{\delta^\frac1{p'}},
\end{align}
where $C$ and $C_\eps$ are positive constants independent of $\bs\varphi$ and of $\delta$. $C$ is also independent of $\eps$.
\end{prop}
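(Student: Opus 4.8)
The plan is to derive all three bounds from the natural energy identity obtained by testing the approximate equation \eqref{app} with $\bs\psi=\bs u_{\eps\delta,\bs\varphi}(t)$ itself, and then to read off the bound on $\partial_t\bs u_{\eps\delta,\bs\varphi}$ directly from that same equation. First I would note that, since $\bs u_{\eps\delta,\bs\varphi}\in\Y_p$ by Proposition~\ref{existence-iv}, the Gelfand-triple integration by parts gives $\langle\partial_t\bs u_{\eps\delta,\bs\varphi}(t),\bs u_{\eps\delta,\bs\varphi}(t)\rangle_p=\tfrac12\tfrac{d}{dt}\|\bs u_{\eps\delta,\bs\varphi}(t)\|_\HH^2$, and that the $\HH$-inner product being the restriction of the $L^2$ one, this coincides with $\tfrac12\tfrac{d}{dt}\|\bs u_{\eps\delta,\bs\varphi}(t)\|_{L^2}^2$. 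Taking $\bs\xi'=\bs0$ in \eqref{op:a:prop:c} and $\bs\eta'=\bs0$ in \eqref{op:c:prop:c}, and observing that \eqref{zero} and \eqref{op:a:prop:b} force $\bs a(x,t,\bs0)=\bs0$ and $\bs b(x,t,\bs0)=\bs0$, yields $\bs a(t,L\bs u_{\eps\delta,\bs\varphi})\cdot L\bs u_{\eps\delta,\bs\varphi}\ge0$ and $\bs b(t,\bs u_{\eps\delta,\bs\varphi})\cdot\bs u_{\eps\delta,\bs\varphi}\ge0$; and since $k_\eps\ge0$, the penalisation term is bounded below by $\delta|L\bs u_{\eps\delta,\bs\varphi}|^p$.

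Discarding the nonnegative $\bs a$, $\bs b$ and $k_\eps$ contributions then gives
\begin{equation*}
\tfrac12\tfrac{d}{dt}\|\bs u_{\eps\delta,\bs\varphi}(t)\|_{L^2}^2+\delta\int_\Omega|L\bs u_{\eps\delta,\bs\varphi}(t)|^p\le\int_\Omega\bs f(t)\cdot\bs u_{\eps\delta,\bs\varphi}(t)\le\tfrac12\|\bs f(t)\|_{L^2}^2+\tfrac12\|\bs u_{\eps\delta,\bs\varphi}(t)\|_{L^2}^2.
\end{equation*}
Integrating in time and applying Gronwall's inequality produces \eqref{est1} with a constant depending only on $\|\bs f\|_{L^2(Q_T)}$ and $\|\bs u_0\|_\HH$; crucially this bound is independent of both $\eps$ and $\delta$, precisely because the $k_\eps$ term was simply dropped. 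Reinserting \eqref{est1} into the integrated inequality bounds $\delta\int_{Q_T}|L\bs u_{\eps\delta,\bs\varphi}|^p$ by a $\delta$-independent constant, which upon extracting the $p$-th root is exactly \eqref{est2}.

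For \eqref{est4} I would isolate $\langle\partial_t\bs u_{\eps\delta,\bs\varphi}(t),\bs\psi\rangle_p$ in \eqref{app} and estimate each of the remaining terms against $\|\bs\psi\|_{\X_p}$, using the continuous embeddings $\X_p\hookrightarrow L^2(\Omega)^m$ and $\X_p\hookrightarrow L^p(\Omega)^m$ (the latter from the norm equivalence of Assumption~\ref{Xp}). The forcing contributes $C\|\bs f(t)\|_{L^2}$; by the growth bound \eqref{zero} together with the identity $(p-1)p'=p$, the field $\bs a$ contributes $a^*\|L\bs u_{\eps\delta,\bs\varphi}(t)\|_{L^p}^{p-1}$; and since $\delta+k_\eps\le 1+C_\eps$ with $C_\eps:=e^{1/\eps^2}-1$, the penalisation term contributes $(1+C_\eps)\|L\bs u_{\eps\delta,\bs\varphi}(t)\|_{L^p}^{p-1}$. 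The lower-order term $\bs b$ splits according to the exponent $(p-1)\vee1$ in \eqref{op:a:prop:b}: for $p\ge2$ it behaves like $\bs a$ and is absorbed into $\|L\bs u_{\eps\delta,\bs\varphi}\|_{L^p}^{p-1}$, whereas for $p<2$ the linear bound $|\bs b|\le b^*|\bs u_{\eps\delta,\bs\varphi}|$ combines with \eqref{est1} to give a contribution bounded uniformly. Finally I would take the $L^{p'}(0,T)$-norm in time: the computation $\int_0^T\big(\|L\bs u_{\eps\delta,\bs\varphi}(t)\|_{L^p}^{p-1}\big)^{p'}\,dt=\|L\bs u_{\eps\delta,\bs\varphi}\|_{L^p(Q_T)}^p$ reduces everything to \eqref{est2}, giving $\|\partial_t\bs u_{\eps\delta,\bs\varphi}\|_{\V_p'}\le (a^*+1+C_\eps)(C/\delta)^{1/p'}$ plus bounded terms, which is \eqref{est4}.

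The routine parts are \eqref{est1} and \eqref{est2}, which are standard energy estimates. The delicate point is \eqref{est4}: one must track exactly how the penalisation coefficient feeds in the factor $C_\eps=e^{1/\eps^2}-1$ and how \eqref{est2} feeds in the factor $\delta^{-1/p'}$, while treating the two growth regimes of $\bs b$ separately. The exponent bookkeeping $(p-1)p'=p$ is what makes the time-integration collapse cleanly onto \eqref{est2}, and getting the powers of $\delta$ and the $\eps$-dependence correct is the main obstacle, since \eqref{est4} is the estimate whose constant is allowed to blow up with $\eps$ and which will have to be compensated later when passing to the limit.
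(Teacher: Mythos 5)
Your proposal is correct and follows essentially the same route as the paper's proof: testing \eqref{app} with the solution itself, dropping the nonnegative $\bs a$, $\bs b$ and $k_\eps$ contributions, and applying Young plus Gronwall to get \eqref{est1}--\eqref{est2}, then a duality estimate for $\partial_t\bs u_{\eps\delta,\bs\varphi}$ with the same two-case treatment of $\bs b$ (linear growth with \eqref{est1} for $1<p<2$, power growth absorbed via Assumption~\ref{Xp} for $p\ge2$), the same bound $\delta+k_\eps\le e^{1/\eps^2}$, and the same exponent bookkeeping $(p-1)p'=p$ feeding \eqref{est2} into \eqref{est4}. No discrepancy worth flagging.
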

\begin{proof}
Using $\bs w=\bs u_{\varepsilon\delta,\bs\varphi}$ as a test function in \eqref{app} 
we get, for a.e. $t\in(0,T)$,
\begin{multline*}
\langle\partial_t\bs w(t),\bs w(t)\rangle_p+\int_\Omega\bs a(t, L\bs w(t))\cdot L\bs w(t)
+\int_\Omega\bs b(t, \bs w(t))\cdot\bs w(t)\\
+\int_\Omega\Big(\delta+ k_\varepsilon\big(|L \bs w(t)| -G[\bs\varphi](t)\big)\Big)\left|L\bs w(t)\right|^p
=\int_\Omega\bs f(t)\cdot\bs w(t).
\end{multline*}
Denote $Q_t=\Omega\times(0,t)$. Integrating the last equality between $0$ and $t$, recalling the monotonicity of $\bs a$, $\bs b$ and of  $\bs T_{\eps}$ defined in \eqref{monotone}, after applying H\"older and Young inequalities to  the right-hand side of the above equation, we obtain the inequality
\begin{equation}\label{desig1}
\int_\Omega|\bs w(t)|^2
+2\delta\int_{Q_t}|L\bs w|^p
\leq
\|\bs w\|^2_{L^2(Q_t)^m}+\|\bs f\|^2_{L^{2}(Q_t)^m}
+\int_\Omega|\bs u_0|^2.
\end{equation}
By the Gronwall inequality we conclude that
$$\|\bs w\|_{L^\infty(0,T;L^2(\Omega))}\le e^T\big(\|\bs f\|^2_{L^{2}(Q_T)^m}
+\|\bs u_0\|_{L^2(\Omega)^m}^2\big)$$
and so we proved \eqref{est1}. From \eqref{desig1}  we immediately obtain \eqref{est2}.
	
Next we prove that, given $\bs\psi\in\X_p$,
\begin{equation}\label{bvp'}
\left|\int_{Q_T}\bs b(\bs w)\cdot\bs\psi\right|\le 
C\max\big\{\|\bs w\|_{L^p(Q_T)}^{p-1},\|\bs w\|_{L^\infty(0,T;L^2(\Omega))}\big\}\|\bs\psi\|_{\X_p},
\end{equation}
being $C$ a positive constant.
We notice that, by the Assumption \ref{Xp}, $\X_p\subset L^2(\Omega)^m$. So, there exists a positive constant $C$ such that, for all $\bs v\in\X_p$, we have
$$\|\bs v\|_{L^{2\vee p}(\Omega)^m}\le C\|\bs v\|_{\X_p}.$$

We split the proof in two cases.

i) $1<p<2$

\begin{align*}
\left|\int_{Q_T}\bs b(\bs w)\cdot\bs\psi\right|&\le b^*\int_0^T\int_\Omega|\bs w(t)||\bs \psi|\\
&\le b^*\int_0^T\|\bs w(t)\|_{L^2(\Omega)^m}\|\bs \psi\|_{L^2(\Omega)^m}\\
&\le C b^*T\|\bs w\|_{L^{\infty}(0,T;L^2(\Omega)^m)}\|\bs \psi\|_{\X_p}.
\end{align*}

ii) $p\ge 2$

\begin{align*}
\left|\int_{Q_T}\bs b(\bs w)\cdot\bs\psi\right|&\le b^*\int_{Q_T}|\bs w|^{p-1}|\bs \psi|\\
&\le b^*\int_0^T\|\bs w(t)\|_{L^p(\Omega)^m}^{p-1}\|\bs\psi\|_{L^p(\Omega)^m}\\
& \le C b^*T^\frac1p\|\bs w\|_{L^p(Q_T)^m}^{p-1}\|\bs\psi\|_{\X_p}.
\end{align*}

From the first equation of \eqref{app}, we conclude that
\begin{multline*}
\Big|\langle\partial_t\bs w(t),\bs\psi\rangle_p\big|\le C_1\Big((a^*+\delta+e^\frac1{\eps^2})\|L\bs w(t)\|_{L^p(\Omega)^\ell}^{p-1}\|L\bs\psi\|_{L^p(\Omega)^\ell}\\
+	\max\big\{\|\bs w\|_{L^p(Q_T)^m}^{p-1},\|\bs w\|_{L^\infty(0,T;L^2(\Omega)^m)}\big\}\|\bs\psi\|_{\X_p}+\|\bs f(t)\|_{L^{2}(\Omega)^m}\big)\|\bs\psi\|_{L^2(\Omega)^m}\Big)
\end{multline*}
and so, using again the Assumption \ref{Xp},
\begin{multline*}
\int_0^T\big|\langle\partial_t\bs w(t),\bs\psi\rangle_p\big|^{p'}dt\le C_2\Big((a^*+e^\frac1{\eps^2})^{p'}\|L\bs w\|_{L^p(Q_T)^\ell}^p\\
+\max\big\{\|\bs w\|_{L^p(Q_T)^m}^{p},\|\bs w\|^{p'}_{L^\infty(0,T;L^2(\Omega)^m)}\big\}+\|\bs f\|_{L^{2}(Q_T)^m}^{p'}\Big)\|\bs\psi\|_{\X_p}^{p'},
\end{multline*}
concluding now easily that
\begin{equation*}
\|\partial_t\bs w(t)\|_{\V_p'}^{p'}=\int_0^T\sup_{\|\bs\psi\|_{X_p}\le 1}\big|\langle\partial_t\bs w(t),\bs\psi\rangle_p\big|^{p'}dt\le \frac{C_\eps}{\delta}.
\end{equation*}
\end{proof}

\begin{prop}\label{prop2} 
Suppose that Assumptions \ref{op:L} to  \ref{gelfand}  are verified. Assuming also \eqref{assump1}, define the function  $S:\HHH\to\Y_p$ by $S(\bs \varphi)=\bs u_{\varepsilon\delta,\bs\varphi}$, where $\bs u_{\varepsilon\delta,\bs\varphi}$ is the unique solution of  problem \eqref{app}. Then $S$  is continuous.
\end{prop}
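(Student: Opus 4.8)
The plan is to establish sequential continuity directly: given $\bs\varphi_n\to\bs\varphi$ in $\HHH$, set $\bs u_n:=S(\bs\varphi_n)$ and $\bs u:=S(\bs\varphi)$, both solving \eqref{app} with the same fixed datum $\bs u_0$, and show $\bs u_n\to\bs u$ in $\Y_p$ by comparing the two equations rather than by extracting weak limits and identifying them by a Minty argument. The a priori bounds of Proposition~\ref{propXp}, being uniform in the parameter, make $\{\bs u_n\}_n$ bounded in $\Y_p$, in $\V_p$ and in $L^\infty(0,T;L^2(\Omega)^m)$; these bounds (for $\eps,\delta$ fixed) will absorb the error terms below. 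Throughout I abbreviate $\mathcal A_{\bs\mu}(\bs\xi)=\bs a(\bs\xi)+\big(\delta+k_\eps(|\bs\xi|-G[\bs\mu])\big)|\bs\xi|^{p-2}\bs\xi$ for the full penalised field.

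First I would subtract \eqref{app} written for $\bs\varphi_n$ and for $\bs\varphi$ and test with $\bs\psi=\bs u_n(t)-\bs u(t)$, after the splitting
\[
\mathcal A_{\bs\varphi_n}(L\bs u_n)-\mathcal A_{\bs\varphi}(L\bs u)
=\big(\mathcal A_{\bs\varphi_n}(L\bs u_n)-\mathcal A_{\bs\varphi_n}(L\bs u)\big)
+\big(\mathcal A_{\bs\varphi_n}(L\bs u)-\mathcal A_{\bs\varphi}(L\bs u)\big).
\]
Tested against $L\bs u_n-L\bs u$, the first bracket is nonnegative plus a coercive remainder: the $\bs a$--part is nonnegative by \eqref{op:a:prop:c}, the factor $k_\eps(\cdot)|\bs\xi|^{p-2}\bs\xi$ is monotone by Lemma~\ref{monot*}, and the $\delta\,|\bs\xi|^{p-2}\bs\xi$--part is strongly monotone, yielding at least $\delta c_p\int_\Omega|L(\bs u_n-\bs u)|^p$ if $p\ge2$ and $\delta c_p\int_\Omega(|L\bs u_n|+|L\bs u|)^{p-2}|L(\bs u_n-\bs u)|^2$ if $1<p<2$, by the classical inequalities for $|\bs\xi|^{p-2}\bs\xi$. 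The $\bs b$--term is nonnegative by \eqref{op:c:prop:c}, while the time terms integrate to $\tfrac12\|\bs u_n(t)-\bs u(t)\|_{L^2(\Omega)^m}^2$ because both solutions start at $\bs u_0$. Integrating over $(0,t)$ I reach
\[
\tfrac12\|\bs u_n(t)-\bs u(t)\|_{L^2(\Omega)^m}^2+\delta c_p\!\int_{Q_t}\!|L(\bs u_n-\bs u)|^p
\le\int_{Q_t}\!\big|\mathcal A_{\bs\varphi_n}(L\bs u)-\mathcal A_{\bs\varphi}(L\bs u)\big|\,|L(\bs u_n-\bs u)|
\]
(with the evident modification of the left-hand side when $1<p<2$); the right-hand side is the only term carrying the index $n$.

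The decisive step is to make this defect vanish. By Assumption~\ref{Phi} the map $G:\HHH\to L^1(Q_T)$ is continuous, so $G[\bs\varphi_n]\to G[\bs\varphi]$ in $L^1(Q_T)$ and every subsequence has a further subsequence converging a.e.; since $k_\eps$ is continuous and bounded by $e^{1/\eps^2}-1$ and $|L\bs u|^{p-1}\in L^{p'}(Q_T)^\ell$, dominated convergence gives, for the whole sequence,
\[
\eta_n:=\Big\|\big(k_\eps(|L\bs u|-G[\bs\varphi_n])-k_\eps(|L\bs u|-G[\bs\varphi])\big)\,|L\bs u|^{p-2}L\bs u\Big\|_{L^{p'}(Q_T)^\ell}\longrightarrow 0.
\]
Bounding the defect by $\eta_n\,\|L(\bs u_n-\bs u)\|_{L^p(Q_T)^\ell}$ and using Young's inequality (for $p\ge2$ one absorbs $\|L(\bs u_n-\bs u)\|_{L^p}^p$ into the left side; for $1<p<2$ one first bounds $\|L(\bs u_n-\bs u)\|_{L^p}$ by the uniform estimate \eqref{est2}), I conclude $\|\bs u_n-\bs u\|_{L^\infty(0,T;L^2(\Omega)^m)}\to0$ and, recovering the $L^p$ norm for $p<2$ by interpolating the strongly monotone quantity against \eqref{est2}, also $\bs u_n\to\bs u$ strongly in $\V_p$.

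Finally I would lift this to $\Y_p$. Strong convergence $L\bs u_n\to L\bs u$ in $L^p(Q_T)^\ell$ and $\bs u_n\to\bs u$ in $\HHH$ (both with a.e.\ convergent subsequences), together with the a.e.\ convergence $G[\bs\varphi_n]\to G[\bs\varphi]$ and the growth bounds \eqref{zero} and \eqref{op:a:prop:b}, imply by Vitali's theorem that $\mathcal A_{\bs\varphi_n}(L\bs u_n)\to\mathcal A_{\bs\varphi}(L\bs u)$ in $L^{p'}(Q_T)^\ell$ and $\bs b(\bs u_n)\to\bs b(\bs u)$ in $\V_p'$. Reading \eqref{app} as $\partial_t\bs u_n=\bs f-L^*\mathcal A_{\bs\varphi_n}(L\bs u_n)-\bs b(\bs u_n)$ in $\V_p'$, the right-hand side converges in $\V_p'$, hence $\partial_t\bs u_n\to\partial_t\bs u$ in $\V_p'$ and $\bs u_n\to\bs u=S(\bs\varphi)$ in $\Y_p$. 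I expect the main obstacle to be the case $1<p<2$ in the energy estimate: there the coercive remainder only controls $(|L\bs u_n|+|L\bs u|)^{p-2}|L(\bs u_n-\bs u)|^2$, and recovering strong $L^p$ convergence of $L\bs u_n$ from it requires a H\"older interpolation against the (now $\delta$-dependent but finite) bound \eqref{est2}, the only place where the argument is not entirely routine.
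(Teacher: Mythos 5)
Your proposal is correct and follows essentially the same route as the paper's proof: the same energy comparison of the two equations tested with $\bs u_n-\bs u$, the same splitting that isolates the defect $\big(k_\eps(|L\bs u|-G[\bs\varphi_n])-k_\eps(|L\bs u|-G[\bs\varphi])\big)|L\bs u|^{p-2}L\bs u$ so that the remaining bracket is monotone by Lemma~\ref{monot*}, the same case distinction $p\ge2$ versus $1<p<2$ with the reverse H\"older interpolation against \eqref{est2}, the same dominated-convergence argument for the defect via Assumption~\ref{Phi}, and the same recovery of $\partial_t\bs u_n\to\partial_t\bs u$ in $\V_p'$ from the equation. Your explicit use of the subsequence-of-subsequence principle and Vitali's theorem only makes precise steps the paper leaves implicit.
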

\begin{proof}
Let us consider a sequence ${\{\bs \varphi_n\}}_n$ converging to $\bs \varphi$ in $\HHH$. Denoting $\bs w_n=\bs u_{\varepsilon\delta,{\bs\varphi_n}}$ and $\bs w=\bs u_{\varepsilon\delta,{\bs\varphi}}$,
we need to prove that 
\begin{equation*} 
\bs w_n\underset n\longrightarrow \bs w\ \text { in } \V_p
\quad \text{and}\quad
\partial _t\bs w_n\underset n\longrightarrow  \partial_t\bs w\ \text { in } \V_p'.
\end{equation*}

The argument is standard but we present it here for the sake of completeness.
Both functions $\bs w_n$ and $\bs w$ solve \eqref{app} so, for any $\bs\psi\in\X_p$,
\begin{multline*}
\left\langle\partial_t\left(\bs w_n(t)-\bs w(t)\right),\bs\psi)\right\rangle_p
+\int_\Omega\big( \bs a(t,L\bs w_n(t))-\bs a(t,L\bs w(t)))\cdot L\bs \psi\\
+\int_\Omega\big( \bs b(t,\bs w_n(t))-\bs b(t,\bs w(t)))\cdot \bs \psi
+\delta\int_\Omega\big( |L\bs w_n(t)|^{p-2}L\bs w_n(t)-|L\bs w(t)|^{p-2}L\bs w(t)\big)\cdot L\bs \psi\\
+\int_\Omega\Big( k_\varepsilon\big(|L \bs w_n(t)| -G[\bs\varphi_n](t)\big) \left|L\bs w_n(t)\right|^{p-2}L \bs w_n(t)
\\
- k_\varepsilon\big(|L \bs w(t)| -G[\bs\varphi](t)\big) \left|L\bs w(t)\right|^{p-2}L \bs w(t)\Big)\cdot L\bs\psi
=0.
\end{multline*}
Replacing $\bs\psi$ by 
$\bs w_n(t)-\bs w(t)$ in the last expression and integrating it over $(0,t)$ we get
\begin{multline}\label{aste}
\frac12\int_{\Omega}\left|\bs w_n(t)-\bs w(t)\right|^2+\int_{Q_t}\big( \bs a(L\bs w_n)-\bs a(L\bs w)\big)\cdot L(\bs w_n-\bs w)\\
+\int_{Q_t}\big( \bs b(\bs w_n)-\bs b(\bs w))\cdot (\bs w_n-\bs w)
+\delta\int_\Omega\big( |L\bs w_n|^{p-2}L\bs w_n-|L\bs w|^{p-2}L\bs w\big)\cdot L(\bs w_n-\bs w)\\
+\int_{Q_t}  \Big(k_\varepsilon\big(|L \bs w_n| -G[\bs\varphi_n]\big)\left|L\bs w_n\right|^{p-2}L \bs w_n-k_\varepsilon\big(|L \bs w| -G[\bs\varphi_n]\big)\left|L\bs w\right|^{p-2}L \bs w\Big)\cdot L\left(\bs w_n-\bs w\right)\\
=\int_{Q_t} \Big(k_\varepsilon\big(|L \bs w| -G[\bs\varphi]\big)-k_\varepsilon\big(|L \bs w| -G[\bs\varphi_n]\big)\Big) \left|L\bs w\right|^{p-2}L \bs w\cdot L(\bs w_n-\bs w).
\end{multline}

Using the monotonicity of $\bs a$, $\bs b$ and the operator $\bs T_\eps$ defined in \eqref{monotone}, we can neglect the second, third and fifth terms of the inequality above.

In the case $p\geq2$,  we obtain,  applying H\"older and Young inequalities, and denoting by $D_p$ the constant related to the strongly monotone term in $\delta$ (see \eqref{a:monot:forte}),
\begin{align*}
\frac12\int_{\Omega}\big|\bs w_n(t)&-\bs w(t)\big|^2+
\delta D_p\int_{Q_t}\left|L\left(\bs w_n-\bs w\right)\right|^p\\
&\leq\int_{Q_t} \Big|k_\varepsilon\big(|L \bs w| -G[\bs\varphi]\big)-k_\varepsilon\big(|L \bs w| -G[\bs\varphi_n]\big)\Big| \left|L\bs w\right|^{p-1}
| L\left(\bs w_n-\bs w\right)|\\
&\leq \frac{C_1}{\delta^{p-1}}\int_{Q_t} \Big|k_\varepsilon\big(|L \bs w| -G[\bs\varphi]\big)-k_\varepsilon\big(|L \bs w| -G[\bs\varphi_n]\big)\Big|^{p'} \left|L\bs w\right|^{p}\\
&\hspace{5mm}+\frac{\delta}2\,D_p\int_{Q_t}| L\left(\bs w_n-\bs w\right)|^p,
\end{align*}
and therefore  we get
\begin{multline}\label{cont1}
\|\bs w_n-\bs w\|_{L^\infty(0,T;L^2(\Omega)^m)}^2+\delta\, D_p\|L(\bs w_n-\bs w)\|_{L^p(Q_T)^\ell}^p\\
\leq \frac{2 C_1}{\delta^{p-1}}\int_{Q_t} \Big|k_\varepsilon\big(|L \bs w|-G[\bs\varphi])\big) -k_\varepsilon\big(|L \bs w| -G[\bs\varphi_n])\big) \Big|^{p'}\left|L\bs w\right|^p.
\end{multline}

Consider now the case $1<p<2$. From \eqref{aste}, we get again
\begin{multline*}
\frac12\int_{\Omega}\left|\bs w_n(t)-\bs w(t)\right|^2
+\delta\int_{Q_t}\big( |L\bs w_n|^{p-2}L\bs w_n-|L\bs w|^{p-2}L\bs w\big)\cdot L(\bs w_n-L\bs w)\\
\le\int_{Q_t} \Big|k_\varepsilon\big(|L \bs w| -G[\bs\varphi]\big)-k_\varepsilon\big(|L \bs w| -G[\bs\varphi_n]\big)\Big| \left|L\bs w\right|^{p-1}|L(\bs w_n-\bs w)|
\end{multline*}
and, using also the coercive condition on $\delta$ (see \eqref{a:monot:forte}) and the  H\"older inverse inequality, we obtain
\begin{multline*}
\frac12\int_\Omega \big|\bs w_n(t)-\bs w(t)\big|^2+\delta D_p\left(\|L\bs w_n\|_{L^p(Q_T)^\ell }^p + \|L \bs w\|_{L^p(Q_T)^\ell }^p\right)^\frac{p-2}{p}\|L(\bs w_n-\bs w)\|_{L^p(Q_T)^\ell }^2\\
\le 
\int_{Q_t} \Big|k_\varepsilon\big(|L \bs w| -G[\bs\varphi]\big)
-k_\varepsilon\big(|L \bs w| -G[\bs\varphi_n]\big)\Big|\left|L\bs w\right|^{p-1}
\left|L(\bs w_n-\bs w)\right|.
\end{multline*}
Recalling, by \eqref{est2}, 
$$\|L \bs w_n\|_{L^p(Q_T)^\ell }^p\le\frac{C^p}{\delta},\quad \|L \bs w\|_{L^p(Q_T)^\ell }^p\le\frac{C^p}{\delta}.$$
and	applying H\"older and Young inequalities to the right-hand side, we obtain
\begin{multline*}
\|\bs w_n-\bs w\|_{L^\infty(0,T;L^2(\Omega)^m)}^2+\delta^\frac2{p}C_2\|L(\bs w_n-\bs w)\|_{L^p(Q_T)^\ell}^2
\\
\leq \frac{1}{2\, C_2\,\delta^\frac{2}{p}}\left(\int_{Q_T} \Big|k_\varepsilon\big(|L\bs w| -G[\bs\varphi]\big)
-k_\varepsilon\big(|L\bs w_n| -G[\bs\varphi_n]\big)\Big|^{p'}\left|L\bs w\right|^p\right)^\frac{2}{p'}
+\frac{C_2\delta^\frac2{p}}2\,\|L(\bs w_n-\bs w)\|_{L^p(Q_T)^\ell }^2
\end{multline*}
and so 
\begin{multline}\label{cont2}
\|\bs w_n-\bs w\|_{L^\infty(0,T;L^2(\Omega)^m)}^2+\frac{C_2\delta^\frac2{p}}2\|L(\bs w_n-\bs w)\|_{L^p(Q_T)^\ell}^2\\
\leq \frac{1}{2\, C_2\,\delta^\frac{2}{p}}\left(\int_{Q_T} \Big|k_\varepsilon\big(|L\bs w| -G[\bs\varphi]\big)
-k_\varepsilon\big(|L\bs w_n| -G[\bs\varphi_n]\big)\Big|^{p'}\left|L\bs w\right|^p\right)^\frac{2}{p'}.
\end{multline}
Observe now that we have a.e. in $Q_T$
\begin{equation*}
\Big|k_\varepsilon\big(|L\bs w| -G[\bs\varphi]\big)
-k_\varepsilon\big(|L\bs w_n| -G[\bs\varphi_n]\big)\Big|^{p'}\left|L\bs w\right|^p
\leq 
\left(2e^\frac{1}{\varepsilon^2}\right)^{p'}\left|L\bs w\right|^p
\end{equation*}
and $k_\varepsilon$ is a continuous function. Recalling that $\bs\varphi_n\underset{n}{\longrightarrow}\bs\varphi$ in $\HHH$,  the Assumption \ref{Phi} implies that $G[\bs \varphi_n]\underset{n}{\longrightarrow}G[\bs\varphi]$ in $L^1(Q_T)$ and so, at least for a subsequence, 
$$G[\bs\varphi_n]\underset n\longrightarrow G[\bs\varphi]\quad \text{a.e.\ in } Q_T$$
and, by the dominated convergence theorem
$$k_\eps(|L\bs w_n|-G[\bs\varphi_n])\underset{n}{\longrightarrow}k_\eps(|L\bs w|-G[\bs\varphi])\quad\text{in }L^{p'}(Q_T)$$
and so the right-hand sides of \eqref{cont1} and \eqref{cont2} converge to zero a.e., when   $n\to\infty$. 

By definition,
\begin{align*}
\left\|\partial_t\left(\bs w_n-\bs w\right)\right\|_{\V_p'}^{p'}
&=\int_0^T\Big(\sup_{\|\bs\psi\|_{\X_p}\leq1}\left\langle\partial_t\left(\bs w_n(t)-\bs w(t)\right),\bs\psi)\right\rangle_p\Big)^{p'}\\
&=\int_0^T\Big(\sup_{\|\bs\psi\|_{\X_p}\leq1}\Big(\int_\Omega \big(\bs a(t,L\bs w(t))-\bs a(t,L\bs w_n(t))\big)\cdot L\bs\psi\\
&\qquad+\int_\Omega \big(\bs b(t,\bs w(t))-\bs b(t,\bs w_n(t))\big)\cdot \bs\psi\\
&\qquad+\delta\int_\Omega\big(|L\bs w(t)\big|^{p-2}L \bs w(t)-|L\bs w_n(t)\big|^{p-2}L \bs w_n(t)\big)\cdot L\bs\psi\\
&\qquad+\int_\Omega k_\varepsilon\big(|L \bs w(t)| -G[\bs\varphi](t)\big)\big|L\bs w(t)\big|^{p-2}L \bs w(t)\cdot L\bs\psi\\
&\qquad-\int_\Omega k_\varepsilon\big(|L \bs w_n(t)| -G[\bs\varphi_n](t)\big)\big|L\bs w_n(t)\big|^{p-2}L \bs w_n(t)\cdot L\bs\psi\Big)\Big)^{p'}.
\end{align*}
But
\begin{align*}
\Big(	\int_\Omega\big(\bs b(t,\bs w(t))-\bs b(t,\bs w_n(t))\big)\cdot \bs\psi\Big)^{p'}
&\le\Big(	\|\big(\bs b(t,\bs w(t))-\bs b(t,\bs w_n(t))\big)\|_{L^2(\Omega)^m}\|\bs\psi\|_{L^2(\Omega)^m}\Big)^{p'}\\
&\le C\Big(	\|\big(\bs b(\bs w)-\bs b(\bs w_n)\big)\|_{L^\infty(0,T;(L^2(\Omega)^m)}\|\bs\psi\|_{\X_p}\Big)^{p'}.
\end{align*}
Applying H\"older inequality, we conclude that
\begin{multline*}
\left\|\partial_t\left(\bs w_n-\bs w\right)\right\|_{\V_p'}
\le C\Big(\,\Big(\int_{Q_T} \big|\bs a(L\bs w)-\bs a(L\bs w_n)\big|^{p'}\Big)^\frac1{p'} \\
+\|\big(\bs b(\bs w)-\bs b(\bs w_n)\big)\|_{L^\infty(0,T;(L^2(\Omega)^m)}+\delta\Big(\int_{Q_T} \big||L\bs w_n|^{p-2}L\bs w_n-|L\bs w|^{p-2}L\bs w\big|^{p'}\Big)^\frac1{p'}\\
+\Big(\int_{Q_T}\left|k_\varepsilon\big(|L \bs w| -G[\bs\varphi]\big)
\big|L\bs w\big|^{p-2}L \bs w-k_\varepsilon\big(|L \bs w_n| -G[\bs\varphi_n]\big)\big|L\bs w_n\big|^{p-2}L \bs w_n\right|^{p'}\Big)^\frac1{p'}\Big)
\end{multline*}
and, arguing as before, we conclude the proof.
\end{proof}

\begin{thm}\label{app-iqv1} Suppose that  Assumptions \ref{op:L} to \ref{Phi} and \eqref{assump1} are verified.
Let $i$ be the inclusion of $\Y_p$ into $\HHH$ and $S:\HHH\to\Y_p$ the function defined in Proposition~\ref{prop2}.
Then the function $i\circ S$ has a fixed point in $\HHH$.
This fixed point solves the problem that consists on finding $\bs u_{\varepsilon\delta}\in\Y_p$ such that
\begin{equation}\label{app-iqv}
\left\{\begin{array}{l}
\displaystyle\langle\partial_t\bs u_{\eps\delta}(t),\bs\psi\rangle_p+\int_\Omega \bs a(t,L \bs u_{\eps\delta}(t))\cdot L\bs\psi
+\int_\Omega \bs b(t,\bs u_{\eps\delta}(t))\cdot \bs\psi\\
\hspace{2cm}+\displaystyle\delta\int_\Omega|L\bs u_{\eps\delta}(t)|^{p-2}L\bs u_{\eps\delta}(t)\cdot L\bs\psi\\
\hspace{2cm}+\displaystyle\displaystyle\int_\Omega k_\varepsilon\big(|L \bs u_{\eps\delta}(t)| -G[\bs u_{\eps\delta}(t)]\big) \left|L\bs u_{\eps\delta}(t)\right|^{p-2}L \bs u_{\eps\delta}(t)\cdot L\bs\psi\\
\hspace{2cm}=\displaystyle\int_\Omega\bs f(t)\cdot\bs\psi\qquad\forall\bs\psi\in\X_p\\
\bs u_{\eps\delta}(0) =\bs u_0.
\end{array}
\right.
\end{equation}
\end{thm}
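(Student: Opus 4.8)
The plan is to obtain $\bs u_{\eps\delta}$ as a fixed point of the composition $i\circ S:\HHH\to\HHH$ via Schauder's theorem, the compactness being supplied by the compact embedding $\Y_p\hookrightarrow\HHH$ (the Aubin-Lions lemma, a consequence of Assumption~\ref{gelfand}). First I would note that $i\circ S$ is a continuous self-map of $\HHH$: the solution operator $S$ is continuous from $\HHH$ into $\Y_p$ by Proposition~\ref{prop2}, and the inclusion $i:\Y_p\hookrightarrow\HHH$ is continuous, so the composition is continuous from $\HHH$ into itself.

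The decisive point is that the a priori estimates \eqref{est1}--\eqref{est4} of Proposition~\ref{propXp} are \emph{uniform in} $\bs\varphi$, since the constants $C$ and $C_\eps$ there are independent of $\bs\varphi$. Consequently there is a radius $R=R(\eps,\delta)>0$ with $\|S(\bs\varphi)\|_{\Y_p}\le R$ for every $\bs\varphi\in\HHH$, that is $S(\HHH)\subseteq B_R$, where $B_R$ denotes the closed ball of radius $R$ in $\Y_p$. I would then set
\[
\mathcal{K}=\overline{i(B_R)}^{\,\HHH}.
\]
Because $B_R$ is convex and $i$ is linear, $i(B_R)$ is convex, and hence so is its closure $\mathcal{K}$; because $B_R$ is bounded in $\Y_p$ while $\Y_p\hookrightarrow\HHH$ is compact, $i(B_R)$ is relatively compact in $\HHH$, so $\mathcal{K}$ is compact (and clearly nonempty). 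Moreover $i\circ S$ maps $\mathcal{K}$ into itself, since $i\circ S(\mathcal{K})\subseteq i\big(S(\HHH)\big)\subseteq i(B_R)\subseteq\mathcal{K}$.

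Thus $i\circ S$ restricts to a continuous map of the nonempty compact convex set $\mathcal{K}$ into itself, and Schauder's fixed point theorem provides $\bs u_{\eps\delta}\in\mathcal{K}$ with $(i\circ S)(\bs u_{\eps\delta})=\bs u_{\eps\delta}$. Since $S$ takes values in $\Y_p$ and $i$ is merely the inclusion, this identity forces $\bs u_{\eps\delta}=S(\bs u_{\eps\delta})\in\Y_p$; that is, $\bs u_{\eps\delta}$ solves \eqref{app} with the choice $\bs\varphi=\bs u_{\eps\delta}$, and replacing $G[\bs\varphi]$ by $G[\bs u_{\eps\delta}]$ there reproduces exactly the system \eqref{app-iqv}. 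Conditional on the earlier propositions, the argument is essentially organisational: the genuine analytic content lies in the continuity of $S$ (Proposition~\ref{prop2}) and in the uniform-in-$\bs\varphi$ bounds (Proposition~\ref{propXp}). The one step demanding care is verifying that the whole image of the fixed-point map lands inside a single compact convex set, which succeeds precisely because one radius $R$ serves all $\bs\varphi$ simultaneously; the compactness of $\mathcal{K}$ then comes for free from Aubin-Lions.
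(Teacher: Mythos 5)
Your proposal is correct and takes essentially the same approach as the paper: continuity of $S$ (Proposition~\ref{prop2}), compactness of $\Y_p\hookrightarrow\HHH$ from Assumption~\ref{gelfand} via Aubin--Lions, the $\bs\varphi$-independent bounds of Proposition~\ref{propXp}, and the Schauder fixed point theorem. Your explicit construction of the compact convex set $\mathcal{K}=\overline{i(B_R)}^{\,\HHH}$ simply spells out what the paper leaves implicit when it notes that $i\circ S$ is completely continuous with bounded image and then invokes Schauder.
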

\begin{proof} 
We use the Schauder fixed point theorem.
We already proved the continuity of $S$ and by the Assumption~\ref{gelfand} and Aubin-Lions Lemma embedding $\Y_p\hookrightarrow\HHH$ is compact, for $1<p<\infty$, and so $i\circ S$ is completely continuous as a map of $\HHH$ into itself.
By  Proposition~\ref{propXp}, given $\bs\varphi\in\HHH$, we have
$\|\bs u_{\eps\delta,\bs\varphi}\|_{\V_p}\le \frac{C}{\delta^\frac1p}$, where $C$ is a constant independent of $\bs\varphi$ and $\delta$ (it may depend on $\eps$).
Because $i$ is continuous, there exists $C_1$  such that $\|\bs v\|_{\HHH}\le C_1\|\bs v\|_{\Y_p}$ and we get 
$$\|i\circ S(\bs\varphi)\|_{\HHH}\le C\,C_1.$$
Then the image of $i\circ S$ is bounded, so we may apply Schauder fixed point theorem, obtaining immediately the conclusion of the existence of a $\bs u_{\eps\delta}=i\circ S(\bs u_{\eps\delta})$ in $\Y_p$.
\end{proof}

\section{Weak solutions of the quasi-variational inequality}

In this section, we prove the existence of a solution of the quasi-variational inequality \eqref{iqv} by taking suitable subsequences of solutions of \eqref{app-iqv} first as $\eps\to0$ and then as $\delta\to0$.

Firstly we collect the {\em a priori} estimates for the solution $\bs u_{\eps\delta}$ of problem \eqref{app-iqv} which are independent of $\eps$. 

\begin{prop} \label{est} Suppose that Assumptions \ref{op:L} to  \ref{Phi} are verified.
Assume also that $\bs f \in L^2(Q_T)^m$ and $\bs u_0\in \K_{[G(\bs u_0)]}$. Let  $\bs u_{\eps\delta}$ be a solution of the approximated problem \eqref{app-iqv}. Then there exists a positive constant $C$, independent of $\eps$ and $\delta$, such that
\begin{align}
\label{ueC}&\|\bs u_{\eps\delta}\|_{L^\infty(0,T;L^2(\Omega)^m)}\le C,\\
\label{ueV}&\|\bs u_{\eps\delta}\|_{\V_p}\le \frac{C}{\delta^\frac1p},\\
\label{aLue}&\|\bs a(L\bs u_{\eps\delta})\|_{\V_p'}\le \frac{C}{\delta^\frac1{p'}},\\
\label{cue}&\|\bs b(\bs u_{\eps\delta})\|_{\V_p'}\le \frac{C}{\delta^\frac1{p'}},\\
\label{3}&\|k_\eps(|L \bs u_{\eps\delta}|-G[\bs u_{\eps\delta}])\|_{L^1(Q_T)}\le C.
\end{align}
\end{prop}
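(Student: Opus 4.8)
The plan is to test the approximated equation~\eqref{app-iqv} with $\bs\psi=\bs u_{\eps\delta}(t)$ and integrate in time, repeating the computation of Proposition~\ref{propXp}. Since the growth bounds \eqref{zero} and \eqref{op:a:prop:b} force $\bs a(x,t,\bs0)=\bs0$ and $\bs b(x,t,\bs0)=\bs0$, the monotonicity inequalities \eqref{op:a:prop:c} and \eqref{op:c:prop:c} taken with $\bs\xi'=\bs0$ and $\bs\eta'=\bs0$ give $\int_\Omega\bs a(L\bs u_{\eps\delta})\cdot L\bs u_{\eps\delta}\ge0$ and $\int_\Omega\bs b(\bs u_{\eps\delta})\cdot\bs u_{\eps\delta}\ge0$; moreover the term $\delta\int_\Omega|L\bs u_{\eps\delta}|^p$ and, because $k_\eps\ge0$, the penalisation term $\int_\Omega k_\eps(|L\bs u_{\eps\delta}|-G[\bs u_{\eps\delta}])|L\bs u_{\eps\delta}|^p$ are both nonnegative. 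After the Gelfand-triple integration by parts of $\int_0^t\langle\partial_t\bs u_{\eps\delta},\bs u_{\eps\delta}\rangle_p$ and Young's inequality on the right-hand side, I reach an inequality of the type~\eqref{desig1} with the extra nonnegative penalisation term added on the left. Dropping the nonnegative terms and invoking Gronwall's inequality gives \eqref{ueC}, and then keeping the $\delta$-term gives \eqref{ueV}; as this argument never uses the precise form of $k_\eps$, the constants are independent of $\eps$ and $\delta$.

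The same energy identity, now retaining the penalisation term, is the source of the delicate estimate. It yields
\begin{equation*}
\int_{Q_T}k_\eps\big(|L\bs u_{\eps\delta}|-G[\bs u_{\eps\delta}]\big)\,|L\bs u_{\eps\delta}|^p\le C,
\end{equation*}
with $C$ independent of $\eps$ and $\delta$, since the right-hand side is controlled by $\tfrac12\|\bs u_0\|_{L^2(\Omega)^m}^2+\tfrac12\|\bs f\|_{L^2(Q_T)^m}^2+\tfrac12\|\bs u_{\eps\delta}\|_{L^2(Q_T)^m}^2$ and the last norm is bounded through \eqref{ueC}.

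The hard part, and the only genuinely nonroutine step, is to pass from this \emph{weighted} bound to the \emph{unweighted} estimate \eqref{3}. This is exactly where the strictly positive lower bound $G[\bs u_{\eps\delta}]\ge g_*>0$ of Assumption~\ref{Phi} is indispensable. Because $k_\eps(s)=0$ for $s\le0$, the integrand vanishes off the set $\{|L\bs u_{\eps\delta}|>G[\bs u_{\eps\delta}]\}$, and on that set $|L\bs u_{\eps\delta}|>G[\bs u_{\eps\delta}]\ge g_*$, so that $|L\bs u_{\eps\delta}|^p\ge g_*^p$. Therefore
\begin{equation*}
\int_{Q_T}k_\eps\big(|L\bs u_{\eps\delta}|-G[\bs u_{\eps\delta}]\big)
\le\frac1{g_*^p}\int_{Q_T}k_\eps\big(|L\bs u_{\eps\delta}|-G[\bs u_{\eps\delta}]\big)\,|L\bs u_{\eps\delta}|^p\le\frac C{g_*^p},
\end{equation*}
which proves \eqref{3}.

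It remains to deduce \eqref{aLue} and \eqref{cue}, which are routine consequences of the growth conditions and \eqref{ueV}. From \eqref{zero} one has $\|\bs a(L\bs u_{\eps\delta})(t)\|_{\X_p'}\le a^*\|L\bs u_{\eps\delta}(t)\|_{L^p(\Omega)^\ell}^{p-1}$; raising to the power $p'$, integrating in $t$ and using $(p-1)p'=p$ gives $\|\bs a(L\bs u_{\eps\delta})\|_{\V_p'}^{p'}\le (a^*)^{p'}\|L\bs u_{\eps\delta}\|_{L^p(Q_T)^\ell}^{p}$, which is bounded by $C/\delta$ through \eqref{ueV}, hence \eqref{aLue}. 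For \eqref{cue} I would argue exactly as for estimate \eqref{bvp'} in Proposition~\ref{propXp}, using \eqref{op:a:prop:b} and the embedding $\|\bs v\|_{L^{2\vee p}(\Omega)^m}\le C\|\bs v\|_{\X_p}$: for $p\ge2$ the estimate again reduces to \eqref{ueV}, while for $1<p<2$ it is controlled directly by $\|\bs u_{\eps\delta}\|_{L^\infty(0,T;L^2(\Omega)^m)}$ via \eqref{ueC}, so in both cases the stated bound $C/\delta^{1/p'}$ holds.
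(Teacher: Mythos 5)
Your proposal is correct and follows essentially the same route as the paper: the energy identity obtained by testing \eqref{app-iqv} with $\bs u_{\eps\delta}$ plus Gronwall gives \eqref{ueC}, \eqref{ueV} and the weighted bound $\int_{Q_T}k_\eps|L\bs u_{\eps\delta}|^p\le C$, the lower bound $g_*$ converts this into \eqref{3}, and the growth conditions with H\"older and the embedding $\X_p\subset L^2(\Omega)^m$ give \eqref{aLue} and \eqref{cue}. The only cosmetic difference is that the paper obtains \eqref{ueC}--\eqref{ueV} by citing Proposition~\ref{propXp} with $\bs\varphi=\bs u_{\eps\delta}$ rather than redoing the identical computation.
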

\begin{proof}		
The first two estimates are direct consequences of the inequalities \eqref{est1} and \eqref{est2} respectively, taking $\bs\varphi=\bs u_{\eps\delta}$.

Given $\bs\psi\in\V_p$, we have
\begin{align*}
\left|\int_{Q_T}\bs a(L\bs u_{\eps\delta})\cdot L\bs\psi\right|&\le a^*\int_{Q_T}|L\bs u_{\eps\delta}|^{p-1}|L\bs\psi|\\
&\le a^*\|L\bs u_{\eps\delta}\|_{L^p(Q_T)^\ell}^{p-1}\|L\bs\psi\|_{L^p(Q_T)^\ell}\\
&\le a^*\frac{C^{p-1}}{\delta^\frac1{p'}}\|L\bs\psi\|_{\V_p},
\end{align*}
proving \eqref{aLue}. 

From \eqref{bvp'}, we have
\begin{equation*}
\|\bs b(\bs w)\|_{\V_p'}\le 
C\max\big\{\|\bs w\|_{L^p(Q_T)^m}^{p-1},\|\bs w\|_{L^\infty(0,T;L^2(\Omega)^m)}\big\}.
\end{equation*}
But $\|\bs w\|_{L^\infty(0,T;L^2(\Omega)^m)}$ is uniformly bounded by \eqref{ueC} and
$$\|\bs w\|_{L^p(Q_T)^m}^{p-1}\le C_1 \|L\bs w\|_{L^p(Q_T)^m}^{p-1}\le \frac{C_2}{\delta^\frac1{p'}},$$
by \eqref{ueV}.

Choosing $\bs u_{\eps\delta}$ as test function in \eqref{app-iqv}, and integrating between $0$ and $t$, we get
\begin{multline*} 
\frac12\int_\Omega|\bs u_{\eps\delta}(t)|^2+\int_{Q_t}\bs a(L\bs u_{\eps\delta})\cdot L\bs u_{\eps\delta}+
\int_{Q_t}\bs b(\bs u_{\eps\delta})\cdot\bs u_{\eps\delta}\\
+\delta\int_{Q_t}|L\bs u_{\eps\delta}|^p+\int_{Q_t} k_\eps\big(|L \bs u_{\eps\delta}|-G[\bs u_{\eps\delta}]\big)|L\bs u_{\eps\delta}|^p= 
\int_{Q_t}\bs f\cdot\bs u_{\eps\delta}+\frac12\int_\Omega\bs u_0^2
\end{multline*}
and so, because $\bs a$ and $\bs b$ are monotone, $\bs a(\bs 0)=\bs 0$, $\bs b(\bs 0)=\bs 0$ and using the Gronwall inequality, we obtain
\begin{equation}\label{ajuda}
\int_{Q_t}k_\eps\big(|L \bs u_{\eps\delta}|-G[\bs u_{\eps\delta}]\big)|L\bs u_{\eps\delta}|^p\le C_1,
\end{equation}
where $C_1$ is a constant independent of $\eps$ and $\delta$.
As $G[\bs u_{\eps\delta}]\ge g_*>0$ and $k_\eps\equiv0$ in $\{|L\bs u_{\eps\delta}|\le G[\bs u_{\eps\delta}]\}$, then
\begin{multline*}
\int_{Q_T}k_\eps\big(|L \bs u_{\eps\delta}|-G[\bs u_{\eps\delta}]\big)=\int_{\{|L\bs u_{\eps\delta}|>G[\bs u_{\eps\delta}]\} }k_\eps\big(|L \bs u_{\eps\delta}|-G[\bs u_{\eps\delta}]\big)\\
\le\int_{\{|L\bs u_{\eps\delta}|>G[\bs u_{\eps\delta}]\} }k_\eps\big(|L \bs u_{\eps\delta}|-G[\bs u_{\eps\delta}])\big)\frac{|L\bs u_{\eps\delta}|^p}{(g_*)^p}
\end{multline*}
and, using \eqref{ajuda},
\begin{equation*}
	\|k_\eps\big(|L \bs u_{\eps\delta}|-G[\bs u_{\eps\delta}]\big)\|_{L^1(Q_T)}\le\frac{C_1}{(g_*)^p},
\end{equation*}
concluding the proof of \eqref{3}.
\end{proof}

\begin{lemma} 
\label{conv}
Let  $\bs u_{\eps\delta}$ be a solution of the approximated problem \eqref{app-iqv}. If $\bs u_\delta$ is the weak limit of a subsequence of $\{\bs u_{\eps\delta}\}_\eps$ when $\eps\to0$ then $\bs u_\delta\in \K_{G[\bs u_\delta]}$.
\end{lemma}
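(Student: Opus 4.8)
The plan is to exploit the uniform penalisation bound \eqref{3} together with the fact that the family $\{k_\eps\}$ defined in \eqref{kapa_eps} is monotone in the parameter $\eps$. First I would record the convergences that come for free from the $\eps$-independent estimates. By \eqref{ueV} the sequence $\{\bs u_{\eps\delta}\}_\eps$ is bounded in $\V_p$ for fixed $\delta$, so (passing to the subsequence that defines $\bs u_\delta$) we have $\bs u_{\eps\delta}\rightharpoonup\bs u_\delta$ in $\V_p$, and since $\bs v\mapsto L\bs v$ is linear and continuous from $\V_p$ into $L^p(Q_T)^\ell$, also $L\bs u_{\eps\delta}\rightharpoonup L\bs u_\delta$ in $L^p(Q_T)^\ell$. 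By Assumption~\ref{Phi} (and the remark following it) a further subsequence satisfies $G[\bs u_{\eps\delta}]\to G[\bs u_\delta]$ uniformly, that is, in $\C\big([0,T];L^\infty(\Omega)\big)$.

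The key observation is that $s\mapsto k_\eps(s)$ is nondecreasing as $\eps$ decreases, so that $k_{\eps_0}(s)\le k_\eps(s)$ whenever $\eps\le\eps_0$, while $k_{\eps_0}$ is itself bounded and Lipschitz. Fixing $\eps_0$ and combining this monotonicity with \eqref{3} gives $\int_{Q_T}k_{\eps_0}\big(|L\bs u_{\eps\delta}|-G[\bs u_{\eps\delta}]\big)\le C$ uniformly in $\eps\le\eps_0$. Replacing the variable threshold $G[\bs u_{\eps\delta}]$ by the limit threshold $G[\bs u_\delta]$ costs only $\mathrm{Lip}(k_{\eps_0})\,|Q_T|\,\|G[\bs u_{\eps\delta}]-G[\bs u_\delta]\|_{\C([0,T];L^\infty(\Omega))}\to0$, so the same bound holds with $G[\bs u_\delta]$ fixed. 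Now the functional $\bs w\mapsto\int_{Q_T}k_{\eps_0}\big(|\bs w|-G[\bs u_\delta]\big)$ is convex (composition of the convex nondecreasing $k_{\eps_0}$ with the convex map $\bs w\mapsto|\bs w|-G[\bs u_\delta]$) and strongly continuous on $L^p(Q_T)^\ell$ because $k_{\eps_0}$ is bounded and continuous; hence it is weakly lower semicontinuous. Applied to $L\bs u_{\eps\delta}\rightharpoonup L\bs u_\delta$, this yields $\int_{Q_T}k_{\eps_0}\big(|L\bs u_\delta|-G[\bs u_\delta]\big)\le C$, with $C$ independent of $\eps_0$.

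Finally I would let $\eps_0\to0$. Since $k_{\eps_0}(s)$ increases to $+\infty$ for every $s>0$ and equals $0$ for $s\le0$, the monotone convergence theorem shows that the pointwise limit of $k_{\eps_0}\big(|L\bs u_\delta|-G[\bs u_\delta]\big)$, which is $+\infty$ on $\{|L\bs u_\delta|>G[\bs u_\delta]\}$ and $0$ elsewhere, has integral over $Q_T$ bounded by $C<\infty$. This forces $\big|\{|L\bs u_\delta|>G[\bs u_\delta]\}\big|=0$, i.e.\ $|L\bs u_\delta|\le G[\bs u_\delta]$ a.e.\ in $Q_T$, which is precisely $\bs u_\delta\in\K_{G[\bs u_\delta]}$. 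The main obstacle is exactly that $L\bs u_{\eps\delta}$ converges only weakly, so one cannot pass to the limit directly inside the nonlinear penalty term; the device of freezing $\eps_0$ and invoking convexity and weak lower semicontinuity before letting $\eps_0\to0$ is what circumvents this, while the uniform control of the nonlocal threshold $G$ granted by Assumption~\ref{Phi} is what makes the replacement of $G[\bs u_{\eps\delta}]$ by its limit legitimate.
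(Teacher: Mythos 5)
Your overall scheme --- freeze $\eps_0$, use the monotonicity $k_\eps\ge k_{\eps_0}$ for $\eps\le\eps_0$ together with the uniform penalisation bound \eqref{3}, pass to the weak limit, then let $\eps_0\to0$ by monotone convergence --- is appealing, and its first steps are sound: the monotonicity of $\eps\mapsto k_\eps$ is correct, and so is the replacement of $G[\bs u_{\eps\delta}]$ by $G[\bs u_\delta]$ using the Lipschitz continuity of $k_{\eps_0}$ and Assumption~\ref{Phi}. The gap is the assertion that $k_{\eps_0}$ is convex, hence that $\bs w\mapsto\int_{Q_T}k_{\eps_0}\big(|\bs w|-G[\bs u_\delta]\big)$ is weakly lower semicontinuous. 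By \eqref{kapa_eps}, $k_{\eps_0}(s)$ equals $e^{s/\eps_0}-1$ on $[0,1/\eps_0]$ and is \emph{constant}, equal to $e^{1/\eps_0^2}-1$, on $[1/\eps_0,\infty)$; a function that increases strictly and then saturates is not convex (its derivative drops from $\eps_0^{-1}e^{1/\eps_0^2}$ to $0$ at $s=1/\eps_0$). This is not a pedantic point: weak lower semicontinuity genuinely fails for such saturating integrands. Already for $L=\nabla$, take a sequence whose gradient oscillates in parallel layers so that $|L\bs u_n|-G$ alternates between the two values $\tfrac1{\eps_0}-h$ and $\tfrac1{\eps_0}+h$; the weak limit satisfies $|L\bs u|-G=\tfrac1{\eps_0}$, the integrals of $k_{\eps_0}$ along the sequence converge to $\big(\tfrac12 e^{1/\eps_0^2}(1+e^{-h/\eps_0})-1\big)|Q_T|$, and this is strictly smaller than the value $\big(e^{1/\eps_0^2}-1\big)|Q_T|$ at the limit. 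So the key bound $\int_{Q_T}k_{\eps_0}\big(|L\bs u_\delta|-G[\bs u_\delta]\big)\le C$ does not follow from your argument, and the final monotone convergence step has no input.

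The flaw is structural rather than cosmetic. Any convex nondecreasing minorant of the capped function $k_{\eps_0}$ is nonpositive (a convex function bounded above on a half-line cannot increase), so no convexification of $k_{\eps_0}$ rescues the step; and if instead you drop the cap and use the genuinely convex function $s\mapsto\big(e^{s/\eps_0}-1\big)^+$, then the domination by $k_\eps$ --- which is what lets you invoke \eqref{3} --- fails exactly on the set where $|L\bs u_{\eps\delta}|-G[\bs u_{\eps\delta}]$ is large, a set on which you only control $\|L\bs u_{\eps\delta}\|_{L^p}$ by \eqref{ueV}, certainly not exponential moments. What is missing is a separate treatment of the region where the penalty saturates, and this is precisely what the paper's proof supplies: it splits $Q_T$ into the sets $A_{\eps\delta}$, $B_{\eps\delta}$, $C_{\eps\delta}$ of \eqref{ABC}, uses \eqref{3} to show that $|B_{\eps\delta}|$ and $|C_{\eps\delta}|$ vanish as $\eps\to0$ (since $k_\eps\ge e^{1/\sqrt\eps}-1$ on $B_{\eps\delta}$ and $k_\eps=e^{1/\eps^2}-1$ on $C_{\eps\delta}$), and performs the weak lower semicontinuity argument on the convex, Lipschitz integrand $\big(|\cdot|-G[\bs u_\delta]\big)^+$ (suitably truncated) rather than on $k_{\eps_0}$. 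If you graft that decomposition onto your set-up --- i.e.\ first prove $\varliminf_{\eps\to0}\int_{Q_T}\big(|L\bs u_{\eps\delta}|-G[\bs u_{\eps\delta}]\big)^+=0$ and then use convexity of the positive part --- your proof closes; as written, it does not.
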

\begin{proof}
To prove that $\bs u_\delta$ belongs to the convex set $\K_{G[\bs u_\delta]}$, we use arguments as in \cite{MirandaRodriguesSantos2012}, which we adapt to our problem. We split $Q_T$ in three sets:
\begin{align}\label{ABC}
\nonumber A_{\eps\delta}&=\big\{(x,t)\in Q_T: \big|L\bs u_{\eps\delta}(x,t)\big|-G[\bs u_{\eps\delta}](x,t)<\sqrt{\varepsilon}\big\},\\
\nonumber B_{\eps\delta}&=\big\{(x,t)\in Q_T:\sqrt\eps\le\big|L\bs u_{\eps\delta}(x,t)\big|-G[\bs u_{\eps\delta}](x,t)<\tfrac1\varepsilon\big\},\\
C_{\eps\delta}&=\big\{(x,t)\in Q_T:\big|L\bs u_{\eps\delta}(x,t)\big|-G[\bs u_{\eps\delta}](x,t)\ge\tfrac1\varepsilon\big\}.
\end{align}
We recall that, by Assumption \ref{Phi}, the operator $G$ is compact. So, as a subsequence of $\{\bs u_{\eps\delta}\}_\eps$ (still denoted by $\{\bs u_{\eps\delta}\}_\eps$) converges weakly  to $\bs u_\delta$ in $\V_p$, then $G[\bs u_{\eps\delta}]$ converges to $G[\bs u_\delta]$ strongly in $\C\big([0,T];L^\infty(\Omega)\big)$ and
\begin{align*}
\int_{Q_T}\big(|L\bs u_\delta|-G[\bs u_\delta]\big)^+&=\int_{Q_T}\varliminf_{\eps\rightarrow0}\Big(\big(|L\bs u_{\eps\delta}|- G[\bs u_{\eps\delta}]\big)\wedge\frac1\eps\vee\sqrt\eps\Big)\\
&\le\varliminf_{\eps\rightarrow0}\int_{Q_T}\Big(\big(|L\bs u_{\eps\delta}|- G[\bs u_{\eps\delta}]\big)\wedge\frac1\eps\vee\sqrt\eps\Big)
\\
&=\varliminf_{\eps\rightarrow0}\Big(\int_{A_{\eps\delta}}\sqrt\eps+
\int_{B_{\eps\delta}}\big(|L\bs u_{\eps\delta}|- G[\bs u_{\eps\delta}]\big)+\int_{C_{\eps\delta}}\frac1\eps\Big).
\end{align*}
We observe that 
\begin{equation*}
\int_{A_{\eps\delta}}\sqrt\eps\le\sqrt\eps|Q_T|\underset{\varepsilon\to0}{\longrightarrow}0,
\end{equation*}
\begin{equation*}
|B_{\eps\delta}|=\int_{B_{\eps\delta}}1
\le\int_{Q_T}\frac{k_\eps(|L\bs u_{\eps\delta}|- G[\bs u_{\eps\delta}])+1}{e^\frac1{\sqrt\eps}} \leq \frac{C}{e^\frac1{\sqrt\eps}} \underset{\varepsilon\to0}{\longrightarrow}0
\end{equation*}
and
\begin{equation*}
\int_{C_{\eps\delta}}\frac1\eps
\le\frac1\eps\int_{Q_T}\frac{k_\eps(|L\bs u_{\eps\delta}|- G[\bs u_{\eps\delta}])}{e^\frac1{\eps^2}-1}\leq \frac{C}{\eps(e^\frac1{\eps^2}-1)}\underset{\varepsilon\to0}{\longrightarrow}0.
\end{equation*}
Besides,
$$\int_{B_{\eps\delta}}\big(|L\bs u_{\eps\delta}|- G[\bs u_{\eps\delta}]\big)\le\||L\bs u_{\eps\delta}|- G[\bs u_{\eps\delta}]\|_{L^p(Q_T)}\,|B_{\eps\delta}|^\frac1{p'}\underset{\varepsilon\to0}{\longrightarrow}0.$$
So
$$\int_{Q_T}\big(|L\bs u_\delta|-G[\bs u_\delta]\big)^+=0,$$
which means that 
$$|L\bs u_\delta|\le G[\bs u_\delta]\quad\text{a.e. in }Q_T.$$
\end{proof}

\begin{lemma}\label{RegSeq}
Let  $\bs v\in \V_p=L^p(0,T;\X_p)$ be such that $\bs v\in\K_{G[\bs v]}$, and $\bs z\in\K_{G[\bs v](0)}$. 
Then there exists a regularizing sequence $\{\bs v_n\}_n$ and a sequence of scalar functions $\{g_n\}_n$ with the following properties:

i) $\bs v_n\in L^\infty(0,T;\X_p)$ and $\partial_t\bs v_n\in L^\infty(0,T;\X_p)$;

ii) $\bs v_n\underset{n}{\longrightarrow}\bs v$ in $\V_p$ strongly;

iii) $\displaystyle\varlimsup_n\int_0^T\langle\partial_t\bs v_{ n}, \bs v_{ n}-\bs v\rangle_p\le0$;

iv) $|L\bs v_n|\le g_n$, where $g_n\in\C\big([0,T];L^\infty(\Omega)\big)$ and $g_n\underset{n}{\longrightarrow}G[\bs v]$ in $\C\big([0,T];L^\infty(\Omega)\big)$.
\end{lemma}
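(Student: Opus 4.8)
The plan is to build $\bs v_n$ by a single exponential (Yosida) regularisation in time, the point being a preliminary observation that makes everything elementary: since $\bs v\in\K_{G[\bs v]}$ and $G[\bs v]\le g^*$ by Assumption~\ref{Phi}, we have $\|\bs v(t)\|_{\X_p}=\|L\bs v(t)\|_{L^p(\Omega)^\ell}\le g^*|\Omega|^{1/p}$ for a.e.\ $t$, so in fact $\bs v\in L^\infty(0,T;\X_p)$; likewise $\bs z\in\X_p$ with $|L\bs z|\le G[\bs v](0)\le g^*$. First I would define $\bs v_n$ as the unique solution in $\X_p$ of the linear ODE $\partial_t\bs v_n+n\bs v_n=n\bs v$, $\bs v_n(0)=\bs z$, that is $\bs v_n(t)=e^{-nt}\bs z+n\int_0^te^{-n(t-s)}\bs v(s)\,ds$, which automatically satisfies $\bs v_n(0)=\bs z$ (the datum $\bs z$ prescribing the initial value of the test function later used in \eqref{iqv}).

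For i), the averaging estimate $\|\bs v_n(t)\|_{\X_p}\le e^{-nt}\|\bs z\|_{\X_p}+(1-e^{-nt})\|\bs v\|_{L^\infty(0,T;\X_p)}$ shows $\bs v_n\in L^\infty(0,T;\X_p)$ (uniformly in $n$), and then $\partial_t\bs v_n=n(\bs v-\bs v_n)\in L^\infty(0,T;\X_p)$ for each fixed $n$. For ii), I would view $\bs v_n$ as the convolution of the extension of $\bs v$ by $\bs z$ on $(-\infty,0)$ with the one-sided approximate identity $s\mapsto ne^{-ns}$ on $(0,\infty)$; strong convergence $\bs v_n\to\bs v$ in $\V_p=L^p(0,T;\X_p)$ then follows from continuity of translation in $L^p$ together with $\|e^{-n\,\cdot}\bs z\|_{\V_p}\to0$.

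Property iii) is then immediate, and in fact sharp: since $\partial_t\bs v_n(t)=n(\bs v(t)-\bs v_n(t))\in\X_p\subset\HH$, the pairing $\langle\,\cdot\,,\,\cdot\,\rangle_p$ reduces to the inner product of $\HH$ and $\int_0^T\langle\partial_t\bs v_n,\bs v_n-\bs v\rangle_p=-n\int_0^T\|\bs v_n-\bs v\|_{\HH}^2\le0$, so the $\varlimsup$ is $\le0$ trivially. For iv), applying the linear operator $L$ (which commutes with the time averaging) and the pointwise constraints $|L\bs z|\le G[\bs v](0)$ and $|L\bs v(s)|\le G[\bs v](s)$ gives $|L\bs v_n(t)|\le g_n(t)$, where I set $g_n$ to be the solution of the same averaging $\partial_t g_n+n g_n=nG[\bs v]$, $g_n(0)=G[\bs v](0)$. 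As $G[\bs v]\in\C\big([0,T];L^\infty(\Omega)\big)$, this $g_n$ lies in $\C\big([0,T];L^\infty(\Omega)\big)$ and, the initial value being matched, $g_n\to G[\bs v]$ uniformly, i.e.\ in $\C\big([0,T];L^\infty(\Omega)\big)$.

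The only genuinely essential step is the opening observation that $\bs v\in L^\infty(0,T;\X_p)$: without it the exponential regularisation would place $\partial_t\bs v_n$ only in $\V_p'$ and not in $L^\infty(0,T;\X_p)$, so i) would fail. The remaining care is in iv), where $g_n$ must be produced by the identical averaging with the matching initial value $G[\bs v](0)$, so that no $O(1)$ initial layer survives and the convergence takes place in $\C\big([0,T];L^\infty(\Omega)\big)$ rather than merely in $L^p$.
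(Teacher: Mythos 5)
Your proposal is correct and follows essentially the same route as the paper: the identical exponential (Yosida-type) time regularisation $\bs v_n+\tfrac1n\partial_t\bs v_n=\bs v$, $\bs v_n(0)=\bs z$, and the identical $g_n$ obtained by applying the same averaging to $G[\bs v]$ with matched initial value $G[\bs v](0)$, followed by the same uniform-continuity argument for iv). The only difference is that where the paper simply cites the literature (Lions, Roub\'{\i}\v{c}ek) for properties i)--iii), you prove them directly; in particular, your opening observation that the constraint forces $\bs v\in L^\infty(0,T;\X_p)$ (indeed needed for the $\partial_t\bs v_n\in L^\infty(0,T;\X_p)$ part of i)) and your pointwise computation $\langle\partial_t\bs v_n,\bs v_n-\bs v\rangle_p=-n\|\bs v_n-\bs v\|^2_{\HH}\le0$ via the Gelfand triple are correct and make the argument self-contained.
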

\begin{proof}
Let $\bs v_{ n}$ be the unique solution of the ordinary differential equation $\bs v_n+\frac1n\partial_t\bs v_n=\bs v$, with $\bs v_n(0)=\bs z$. 
The function $\bs v_n$ has the following expression:
$$\bs v_n(x,t)=e^{-nt}\int_0^t\bs v(x,\tau)ne^{n\tau}d\tau+e^{-nt}\bs z(x)$$
and it is well known it satisfies i), ii) and iii) (see \cite[p.~274]{Lions1969} or \cite[p.~206]{Roubicek2013}).
Therefore, it follows
\begin{multline*}
|L\bs v_n(x,t)|\le e^{-nt}\int_0^t|L\bs v(x,\tau)|ne^{n\tau}d\tau
+e^{-nt}|L\bs z(x)|\\
\le e^{-nt}\int_0^t G[\bs v](x,\tau)ne^{n\tau}d\tau+e^{-nt}G[\bs v](x,0),\quad\text{for a.e. }(x,t)\in Q_T.
\end{multline*}
Defining
$$
g_n(x,t)= e^{-nt}\int_0^t G[\bs v](x,\tau)ne^{n\tau}d\tau+e^{-nt}G[\bs v](x,0)
=\int_{-\infty}^0\widetilde G[\bs v](x,\tfrac sn+t)e^sds,
$$
where $\widetilde G[\bs v](x,t)$ denotes the extension of $G[\bs v]\in\C\big([0,T];L^\infty(\Omega)\big)$ by $G[\bs v](x,0)$ for $t<0$, we have 
\begin{equation*}
\big|g_n(x,t)-G[\bs v](x,t)|\le\int_{-\infty}^0|\widetilde G[\bs v](x,\tfrac{s}{n}+t)-G[\bs v](x,t)\big|e^sds\\
\end{equation*}
and, by the uniform continuity of $G[\bs v](t)$ in $[0,T]$ with values in $L^\infty(\Omega)$, we have also the uniform convergence of
$g_n\underset{n}{\rightarrow}G[\bs v]$ in $\C\big([0,T];L^\infty(\Omega\big)$, concluding iv).
\end{proof}

\begin{proof}[Proof of Theorem \ref{IQV0}]  The boundedness of $\{\bs u_{\eps\delta}\}_\eps$ in $L^\infty\big(0,T;L^2(\Omega)^m\big)\cap\V_p$ implies that there exists a subsequence, still denoted by $\{\bs u_{\eps\delta}\}_\eps$, converging  to a function $\bs u_\delta$ when $\eps\rightarrow0$, in $L^\infty\big(0,T;L^2(\Omega)^m\big)\cap\V_p$ weak-$*$. 
So, by the Lemma~\ref{conv}, $\bs u_\delta\in\K_{G[\bs u_\delta]}$ and we may extract a subsequence of $\{\bs u_\delta\}_\delta$ converging to some $\bs u$ weakly-$*$ in $L^\infty\big(0,T;L^2(\Omega)^m\big)\cap\V_p$ when $\delta\rightarrow0$. 

Observe that, for any measurable set $\omega\subset Q_T$,
$$\int_\omega|L\bs u|\le\varliminf_{\delta\rightarrow0}\int_\omega|L\bs u_\delta|\le\int_\omega\lim_{\delta\rightarrow0} G[\bs u_\delta]\le\int_\omega G[\bs u],$$
using the Assumption \ref{Phi}. Consequently, 
$$|L\bs u|\le G[\bs u]\text{ a.e. in }Q_T$$
and so $\bs u\in\K_{G[\bs u]}$.

{\bf Step 1:} The limit when $\eps\to0$

From the estimates, \eqref{aLue}, \eqref{cue} and \eqref{ueV} in Proposition \ref{est}, there exist $\bs\chi_\delta\in \V_p'$, $\bs\Upsilon_\delta\in  \V_p'$ and $\bs\Lambda_\delta\in L^{p'}(Q_T)^\ell$, such that, for subsequences, 
\begin{align}\label{tende}
\nonumber\bs a(L\bs u_{\eps\delta})&\underset{\eps\rightarrow0}{\lraup}\bs \chi_\delta \quad\text{in}\quad \V_p'\text{ weak},\\
\nonumber\bs b(\bs u_{\eps\delta})&\underset{\eps\rightarrow0}{\lraup}\bs \Upsilon_\delta \quad\text{in}\quad \V_p'\text{ weak}.
\\
|L\bs u_{\eps\delta}|^{p-2}L\bs u_{\eps\delta}&\underset{\eps\rightarrow0}{\lraup}\bs \Lambda_\delta \quad\text{in}\quad L^{p'}(Q_T)^\ell\text{ weak}.	
\end{align}

Define the operator $\A_\delta:\V_p\rightarrow\V_p'$ by
\begin{equation}\label{A}
	\langle \A_\delta\bs v,\bs w\rangle=\int_{Q_T}\big(\bs a(L\bs v)+\delta|L\bs v|^{-2}L\bs v\big)\cdot L\bs w+\int_{Q_T}\bs b(\bs v)\cdot\bs w.
\end{equation}

Given $\bs v$ belonging to the space $\Y_p$ defined in \eqref{embed}, we have
\begin{multline*}
\nonumber\int_0^T\langle\partial_t\bs u_{\eps\delta} ,\bs v -\bs u_{\eps\delta} \rangle_p=-\frac12\int_\Omega|\bs u_{\eps\delta}(T)-\bs v(T)|^2+\frac12\int_\Omega|\bs u_0-\bs v(0)|^2	+\int_0^T\langle\partial_t\bs v ,\bs v -\bs u_{\eps\delta} \rangle_p\\
\le \frac12\int_\Omega|\bs u_0-\bs v(0)|^2+\int_0^T\langle\partial_t\bs v,\bs v-\bs u_{\eps\delta}\rangle_p.
\end{multline*}

From now on, we denote $k_\eps(|L\bs u_{\eps\delta}-G[\bs u_{\eps\delta}])$ simply by $k_\eps$, with no risk of confusion.

Using $\bs u_{\eps\delta}-\bs v$ as test function in \eqref{app-iqv} and integrating between $0$ and $T$,
\begin{multline}\label{qquase}
\langle\A_\delta\bs u_{\eps\delta}, \bs u_{\eps\delta}\rangle\le \frac12\int_\Omega|\bs u_0-\bs v(0)|^2+\int_0^T\langle\partial_t\bs v,\bs v-\bs u_{\eps\delta}\rangle_p+\langle\A_\delta\bs u_{\eps\delta}, \bs v\rangle\\
+\int_{Q_T}k_\eps|L\bs u_{\eps\delta}|^{p-2}L \bs u_{\eps\delta}\cdot L(\bs v-\bs u_{\eps\delta})-\int_{Q_T}\bs f\cdot(\bs v-\bs u_{\eps\delta}).
\end{multline}
and so, for all $\bs v\in\Y_p$,
\begin{multline}\label{quase}
	\langle\A_\delta\bs u_{\eps\delta}, \bs u_{\eps\delta}-\bs u\rangle\le \frac12\int_\Omega|\bs u_0-\bs v(0)|^2+\int_0^T\langle\partial_t\bs v,\bs v-\bs u_{\eps\delta}\rangle_p+\langle\A_\delta \bs u_{\eps\delta}, \bs v-\bs u\rangle\\
	+\int_{Q_T}k_\eps|L\bs u_{\eps\delta}|^{p-2}L \bs u_{\eps\delta}\cdot L(\bs v-\bs u_{\eps\delta})
	-\int_{Q_T}\bs f\cdot(\bs v-\bs u_{\eps\delta}).
\end{multline}

Let $\bs u_n$ be the regularizing sequence of $\bs u$, defined in the previous lemma. Using $\bs u_n$ as test function in \eqref{quase}, we get
\begin{multline}\label{gn}
\langle\A_\delta\bs u_{\eps\delta}, \bs u_{\eps\delta}-\bs u\rangle\le \int_0^T\langle\partial_t\bs u_n,\bs u_n-\bs u_{\eps\delta}\rangle_p+\langle\A_\delta \bs u_{\eps\delta}, \bs u_n-\bs u\rangle\\
+\int_{Q_T}k_\eps|L\bs u_{\eps\delta}|^{p-2}L \bs u_{\eps\delta}\cdot L(\bs u_n-\bs u_{\eps\delta})
-\int_{Q_T}\bs f\cdot(\bs u_n-\bs u_{\eps\delta}).
\end{multline}
But
\begin{multline}\label{qqq}
\int_{Q_T}k_\eps |L\bs u_{\eps\delta}|^{p-2}L\bs u_{\eps\delta}\cdot L(\bs u_n-\bs u_{\eps\delta})\le
\int_{Q_T}k_\eps |L\bs u_{\eps\delta}|^{p-1}(|L\bs u_n|-|L\bs u_{\eps\delta}|)\\
=\int_{Q_T}k_\eps |L\bs u_{\eps\delta}|^{p-1}(|L\bs u_n|-G[\bs u_{\eps\delta}])
+\int_{Q_T}k_\eps |L\bs u_{\eps\delta}|^{p-1}(G[\bs u_{\eps\delta}]-|L\bs u_{\eps\delta}|)\\
\le\int_{Q_T}k_\eps |L\bs u_{\eps\delta}|^{p-1}(|L\bs u_n|-G[\bs u_{\eps\delta}]).
\end{multline}
In fact, the term $k_\eps |L\bs u_{\eps\delta}|^{p-1}(G[\bs u_{\eps\delta}]-|L\bs u_{\eps\delta}|)$ is less or equal to zero, because when $|L\bs u_{\eps\delta}|< G[\bs u_{\eps\delta}]$ then $k_\eps(|L\bs u_{\eps\delta}|-G[\bs u_{\eps\delta}]) =0$. 

Then, recalling \eqref{3} and \eqref{ajuda}, we conclude that
\begin{multline}\label{last}
\int_{Q_T}k_\eps |L\bs u_{\eps\delta}|^{p-1}(|L\bs u_n|-G[\bs u_{\eps\delta}])\le\int_{Q_T}k_\eps |L\bs u_{\eps\delta}|^{p-1}(G_n-G[\bs u_{\eps\delta}])\\
\le\Big(\int_{Q_T}k_\eps\Big)^\frac1p\Big(\int_{Q_T}k_\eps|L\bs u_{\eps\delta}|^p\Big)^\frac1{p'}\|G_n-G[\bs u_{\eps\delta}]\|_{L^\infty(Q_T)}
\le C\|G_n-G[\bs u_{\eps\delta}]\|_{L^\infty(Q_T)}.\end{multline}

Going back to \eqref{gn} and using \eqref{qqq} and the estimate above, we get
\begin{multline*}
	\langle\A_\delta\bs u_{\eps\delta}, \bs u_{\eps\delta}-\bs u\rangle\le \int_0^T\langle\partial_t\bs u_n,\bs u_n-\bs u_{\eps\delta}\rangle_p+\langle\A_\delta \bs u_{\eps\delta}, \bs u_n-\bs u\rangle\\
	+C\|G_n-G[\bs u_{\eps\delta}]\|_{L^\infty(Q_T)}
	-\int_{Q_T}\bs f\cdot(\bs u_n-\bs u_{\eps\delta}).
\end{multline*}

So, noticing that $G[\bs u_{\eps\delta}]\underset{\eps\rightarrow0}{\longrightarrow}G[\bs u_{\delta}]$ in $\C\big([0,T];L^\infty(\Omega)\big)$ and recalling \eqref{tende},
\begin{multline*}
\varlimsup_{\eps\rightarrow0}	\langle \A_\delta\bs u_{\eps\delta}, \bs u_{\eps\delta}-\bs u\rangle\le \int_0^T\langle\partial_t\bs u_n,\bs u_n-\bs u_\delta\rangle_p+\int_0^T\langle\bs\chi_\delta+\delta\bs\Lambda_\delta, L(\bs u_n-\bs u)\rangle\\
+\int_0^T\langle\bs{\Upsilon}_\delta,\bs u_n-\bs u\rangle
+C\|G_n-G[\bs u_\delta]\|_{L^\infty(Q_T)}
	-\int_{Q_T}\bs f\cdot(\bs u_n-\bs u_\delta).
\end{multline*}
\pagebreak

{\bf Step 2:} The limit when $\delta\to0$

Because there exists a positive constant $C$, independent of $\eps$ and $\delta$ such that
\begin{align*}
&\|\bs\chi_\delta\|_{\V_p'}\le\varliminf_{\eps\rightarrow0}\|\bs a(L\bs u_{\eps\delta})\|_{\V_p'}\le C\\
&\|\bs\Upsilon_\delta\|_{\V_p'}\le\varliminf_{\eps\rightarrow0}\|\bs b(\bs u_{\eps\delta})\|_{L^{p'}(Q_T)^m}\le C\\
&\|\bs\Lambda_\delta\|_{L^{p'}(Q_T)^\ell}\le\varliminf_{\eps\rightarrow0}\||L\bs u_{\eps\delta}|^{p-2}L\bs u_{\eps\delta}\|_{L^{p'}(Q_T)^\ell}\le C,
\end{align*}
then, for a subsequence,
\begin{align}\label{delta00}
	\nonumber&\bs \chi_\delta\underset{\delta\rightarrow0}{\lraup}\bs\chi\quad\text{ in }\V_p'\\
	\nonumber&\bs \Upsilon_\delta\underset{\delta\rightarrow0}{\lraup}\bs\Upsilon\quad\text{ in }\V_p'\\
	&\delta\bs \Lambda_\delta\underset{\delta\rightarrow0}{\longrightarrow}0\quad\text{ in }L^{p'}(Q_T)^\ell
\end{align}
and, since $G[\bs u_{\delta}]\underset{\delta\rightarrow0}{\longrightarrow}G[\bs u]$ in $L^\infty(Q_T)$, then
\begin{multline*}
\varlimsup_{\delta\rightarrow0}	\varlimsup_{\eps\rightarrow0}	\langle \A_\delta\bs u_{\eps\delta}, \bs u_{\eps\delta}-\bs u\rangle\le \int_0^T\langle\partial_t\bs u_n,\bs u_n-\bs u\rangle_p+\int_0^T\langle\bs\chi, L(\bs u_n-\bs u)\rangle\\
+\int_0^T\langle\bs{\Upsilon},\bs u_n-\bs u\rangle
+C\|G_n-G[\bs u]\|_{L^\infty(Q_T)}
-\int_{Q_T}\bs f\cdot(\bs u_n-\bs u).
\end{multline*}
Letting $n\rightarrow\infty$ in the above inequality, using that $\displaystyle \int_0^T\langle\partial_t\bs u_n,\bs u_n-\bs u\rangle_p\le0$ and   $G_n\underset{n}{\longrightarrow}G[\bs u]$ in $\C\big([0,T];L^\infty(\Omega)\big),$ we conclude that
$$\varlimsup_{\delta\rightarrow0}	\varlimsup_{\eps\rightarrow0}	\langle \A_\delta\bs u_{\eps\delta}, \bs u_{\eps\delta}-\bs u\rangle\le0.$$	

If $\A:\V_p\rightarrow\V_p'$ is defined by
\begin{equation}\label{OpA}
\langle\A\bs v,\bs w\rangle=\int_{Q_T}\bs a(L\bs v)\cdot L\bs w+\int_{Q_T}\bs b(\bs v)\cdot\bs w,
\end{equation}
and so
\begin{equation*}
\langle \A\bs u_{\eps\delta}, \bs u_{\eps\delta}-\bs u\rangle
=\langle \A_\delta\bs u_{\eps\delta}, \bs u_{\eps\delta}-\bs u\rangle
	-\delta\int_{Q_T}|L\bs u_{\eps\delta}|^{p-2}L\bs u_{\eps\delta}\cdot L(\bs u_{\eps\delta}-\bs u),
\end{equation*}
yields
\begin{equation*}
\varlimsup_{\delta\rightarrow0}	\varlimsup_{\eps\rightarrow0}	\langle \A\bs u_{\eps\delta}, \bs u_{\eps\delta}-\bs u\rangle
=\varlimsup_{\delta\rightarrow0}	\varlimsup_{\eps\rightarrow0}\langle \A_\delta\bs u_{\eps\delta}, \bs u_{\eps\delta}-\bs u\rangle\le0.
\end{equation*}
 
 {\bf Step 3:} Conclusion
 
Let $a_\delta=\displaystyle\varlimsup_{\eps\rightarrow0}\langle \A\bs u_{\eps\delta}, \bs u_{\eps\delta}-\bs u\rangle.$ As $\displaystyle\varlimsup_{\delta\rightarrow0}a_\delta\le0$, given $\eta>0$, there exists $\delta_\eta>0$ such that $a_{\delta_\eta}<\frac\eta2$. But, because 
$\displaystyle\varlimsup_{\eps\rightarrow0}	\langle \A\bs u_{\eps\delta_\eta}, \bs u_{\eps\delta_\eta}-\bs u\rangle=a_{\delta_\eta}$, we can find $\eps_\eta=\eps(\delta_n)>0$ for which $\langle \A\bs u_{\eps_\eta\delta_\eta}, \bs u_{\eps_\eta\delta_\eta}-\bs u\rangle<\eta$. So,
\begin{equation*}
\varlimsup_{\eta\to0}\langle \A\bs u_{\eps_\eta\delta_\eta}, \bs u_{\eps_\eta\delta_\eta}-\bs u\rangle\le0.
\end{equation*}

From now on we denote $\bs u_\eta=\bs u_{\eps_\eta\delta_\eta}$ and $k_\eta=k_{\eps_\eta}$.
 
As the operator $\A $ 
is bounded, monotone and hemicontinuous, then it is pseudo-monotone. So, as  $\displaystyle\varlimsup_{\eta\rightarrow0}	\langle\A\bs u_{\eta}, \bs u_{\eta}-\bs u\rangle\le0$ then
\begin{equation}\label{adelta}
	\langle\mathcal A \bs u, \bs u-\bs v\rangle\le\varliminf_{\eta\rightarrow0} \langle\mathcal A \bs u_{\eta}, \bs u_{\eta}-\bs v\rangle\quad\forall\bs v\in \K_{G[\bs u]}.
	\end{equation}

Finally we conclude, going back to \eqref{qquase}, that if $\bs v\in\K_{G[\bs u]}$ then
\begin{multline}\label{finalfinal}
\langle\mathcal A \bs u, \bs u-\bs v\rangle\le\,\,\varliminf_{\eta\rightarrow0}\Big(\tfrac12\int_\Omega|\bs u_0-\bs v(0)|^2+\int_0^T\langle\partial_t\bs v,\bs v-\bs u_{\eta}\rangle_p\\
+\int_{Q_T}k_\eta|L\bs u_{\eta}|^{p-2}L \bs u_{\eta} \cdot L(\bs v-\bs u_{\eta})-\int_{Q_T}\bs f\cdot(\bs v-\bs u_{\eta})\Big).
\end{multline}
But, as $\bs v\in\K_{G[\bs u]}$, as in \eqref{qqq} and \eqref{last}
\begin{multline*}
\int_{Q_T}k_\eta|L\bs u_{\eta}|^{p-2}L \bs u_{\eta} \cdot L(\bs v-\bs u_{\eta})\le
\int_{Q_T}k_\eta|L\bs u_{\eta}|^{p-1}(|L\bs v|- |L\bs u_{\eta}|)\\
\le
\int_{Q_T}k_\eta|L\bs u_{\eta}|^{p-1}(G[\bs u_{\eta}]-|L\bs u_{\eta}|)
+ \int_{Q_T}k_\eta|L\bs u_{\eta}|^{p-1}(G[\bs u]-G[\bs u_{\eta}])\\
 \le C\|G[\bs u]-G[\bs u_{\eta}]\|_{\C([0,T];L^\infty(\Omega))}.
\end{multline*}

Then
\begin{equation*}
\varliminf_{\eta\rightarrow0}\int_{Q_T}k_\eta|L\bs u_{\eta}|^{p-2}L \bs u_{\eta} \cdot L(\bs v-\bs u_{\eta})\le0
\end{equation*}
because $\displaystyle\lim_{\eta\rightarrow0}G[\bs u_{\eta}]=G[\bs u]$ in $\C\big([0,T];L^\infty(\Omega)\big)$ and so 
\begin{equation*}
	\langle\mathcal A \bs u, \bs u-\bs v\rangle\le\tfrac12\int_\Omega|\bs u_0-\bs v(0)|^2+\int_0^T\langle\partial_t\bs v,\bs v-\bs u\rangle
-\int_{Q_T}\bs f\cdot(\bs v-\bs u),
\end{equation*}
concluding the proof, since we already know that $\bs u\in\K_{G[\bs u]} $.
\end{proof}

\begin{proof}[Proof of Theorem \ref{th2.2}]
Let $\bs u_1,\bs u_2\in\K_g$ be two solutions of \eqref{iqv} and denote by $\{\bs w_n\}_n$ and by $\{g_n\}_n$ the regularizing sequences of Lemma~\ref{RegSeq} of $\bs w=\frac{\bs u_1+\bs u_2}{2}\in\K_g$ and $g$, respectively, with $\bs z=\bs u_0$.
Considering  
\begin{equation*}
\eps_n=\|g_n-g\|_{\C([0,T];L^\infty(\Omega))}\quad\text{and}\quad \rho_n=\frac{g_*}{g_*+\eps_n}\underset{n}{\longrightarrow}1,
\end{equation*}
we have $\widehat{\bs w}_n=\rho_n\bs w_n\in\K_g\cap\Y_p$
and it may be chosen as test function in \eqref{iqv} for $\bs u_1$ and $\bs u_2$.
We obtain, by addition, 
\begin{equation}\label{uni}
2\int_0^T\langle\partial_t\widehat{\bs w}_n,\bs w -\widehat{\bs w}_n\rangle_p
+\langle\A\bs u_1,\bs u_1-\widehat{\bs w}_n\rangle
+\langle\A\bs u_2,\bs u_2-\widehat{\bs w}_n\rangle
\le 2\int_{Q_T}\bs f\cdot(\bs w-\widehat{\bs w}_n)
+(\rho_n-1)^2\int_\Omega|\bs u_0|^2. 
\end{equation}

Observing that 
\begin{equation*}
\langle\partial_t\widehat{\bs w}_n,\widehat{\bs w}_n-\bs w \rangle_p
= \rho_n\langle\partial_t\bs w_n,\bs w_n-\bs w \rangle_p
+ \rho_n(\rho_n-1)\langle\partial_t\bs w_n,\bs w_n\rangle_p,
\end{equation*}
integrating in time, since $\bs w_n\in W^{1,p}(0,T;\X_p)\subset \C\big(0,T; L^2(\Omega)^m\big)$, we have
\begin{multline*}
\int_0^T\langle\partial_t\widehat{\bs w}_n,\widehat{\bs w}_n-\bs w \rangle_p
= \rho_n\int_0^T\langle\partial_t\bs w_n,\bs w_n-\bs w \rangle_p\\
+ \tfrac12\rho_n(1-\rho_n)\big(\|\bs u_0\|^2_{L^2(\Omega)^m}-\|\bs w_n(T)\|^2_{L^2(\Omega)^m}\big)
\leq \tfrac12\rho_n(1-\rho_n)\|\bs u_0\|^2_{L^2(\Omega)^m}
\underset{n}{\longrightarrow}0.
\end{multline*}

Therefore, taking the limit in \eqref{uni}, since $\widehat{\bs w}_n\underset{n}{\longrightarrow}\frac{\bs u_1+\bs u_2}{2}$, we obtain
\begin{equation*}
\int_{Q_T}(\bs a(L\bs u_1)-\bs a(L\bs u_2))\cdot(L\bs u_1-L\bs u_2)
+ \int_{Q_T}(\bs b(\bs u_1)-\bs b(\bs u_2))\cdot(\bs u_1-\bs u_2)
=\langle \A\bs u_1-\A\bs u_2,\bs u_1-\bs u_2\rangle
\leq0
\end{equation*}
and the conclusion $\bs u_1=\bs u_2$ follows by the strict monotonicity of $\bs b$ 
or $\bs a$ with the Assumption~\ref{op:a}.
\end{proof}

\section{Solution of the variational inequality}

We study now the variational inequality case as well as the continuous dependence of its solution on the given data. We obtain different stability results whether we consider the case where the operator $\bs a$ is monotone or strongly monotone.

\begin{proof} {\bf of Theorem \ref{IV0}}
We penalise  the variational inequality using the function $k_\eps$ defined in \eqref{kapa_eps}, as we have done in Section~\ref{app-section}. For $\eps\in(0,1)$ and $\delta>0$ let us consider the problem of finding $\bs w_{\varepsilon\delta}\in \V_p\,\cap\, H^1\big(0,T;L^2(\Omega)\big)$ such that 
\begin{equation}\label{ApproxProblem}
\left\{\begin{array}{l}
\displaystyle\int_\Omega\partial_t\bs w_{\eps\delta}(t)\cdot\bs\psi
+\int_\Omega \bs a(t,L\bs w_{\eps\delta}(t))\cdot L\bs\psi+\int_\Omega \bs b(t,\bs w_{\eps\delta}(t))\cdot L\bs\psi\vspace{3mm}\\
\hspace{1cm}\hfill{\quad\quad+\displaystyle\int_\Omega\Big(\delta+ k_\varepsilon\big(|L \bs w_{\eps\delta}(t)|^p -g(t)^p\big)\Big) \left|L\bs w_{\eps\delta}(t)\right|^{p-2}L \bs w_{\eps\delta}(t)\cdot L\bs\psi}\vspace{3mm}\\
\hfill{\quad\quad\displaystyle=\int_\Omega\bs f(t)\cdot\bs\psi\qquad\forall \bs\psi\in\X_p,\quad\text{for a.e. }t\in(0,T)}\vspace{3mm}\\
\bs w_{\eps\delta}(0) =\bs w_0.
\end{array}
\right.
\end{equation}
The proof of existence of solution for this problem is similar to the proof of Proposition~\ref{existence-iv} and can be done with the Galerkin method (see \cite[p.240]{Roubicek2013}, for instance). 
We observe that here we consider the function $k_\eps\big(|L\bs w_{\eps\delta}|^p-g^p\big)$, instead of $k_\eps(|L\bs w_{\eps\delta}|-G[\bs w_{\eps\delta}])$. 

As in the estimates \eqref{est1}, \eqref{est2} and \eqref{3} we obtain, with a constant $C>0$ independent of $\eps$ and $\delta$,
\begin{align}
\label{Est-u_epsilon}
\left\|\bs w_{\varepsilon\delta}\right\|_{L^\infty(0,T;L^2(\Omega)^m)}
& \leq C,\\
\label{Est-Lu_epsilon}
\left\|L \bs w_{\varepsilon\delta}\right\|_{L^p(Q_T)^\ell}
&\le \tfrac{C}{\delta^\frac1p}\\
\label{Est-k_eps}
\|k_\eps(|L\bs w_{\eps\delta}|^p-g^p)\|_{L^1(Q_T)}
&\leq C.
\end{align}

Using Galerkin's approximation, we can argue formally with  $\partial_t\bs w_{\eps\delta}$ as a test function on \eqref{ApproxProblem} and we get
\begin{multline}\label{dtuepdelta}
\int_{\Omega}|\partial_t\bs w_{\varepsilon\delta}(t) |^2
+\int_{\Omega}\bs a(t,L\bs w_{\varepsilon\delta}(t) )\cdot \partial_t L\bs w_{\varepsilon\delta}(t) 
+\int_{\Omega}\bs b(t,\bs w_{\varepsilon\delta}(t) )\cdot \partial_t \bs w_{\varepsilon\delta}(t) \\
+\int_{\Omega}(\delta+k_\varepsilon(|L\bs w_{\eps\delta}|^p-g(t)^p))|L\bs w_{\varepsilon\delta}(t)|^{p-2}L\bs w_{\varepsilon\delta}(t) \cdot \partial_t L\bs w_{\varepsilon\delta}(t) 
=\int_{\Omega}\bs f(t) \cdot\partial_t \bs w_{\varepsilon\delta}(t).
\end{multline}

Set $\displaystyle\phi_\eps(s)=\int_0^sk_\varepsilon(\tau)\,d\tau$ and observe that
\begin{multline*}
k_\varepsilon(|L\bs w_{\eps\delta}(t)|^p-g^p(t))|L\bs w_{\eps\delta}(t)|^{p-2}L\bs w_{\eps\delta}(t)\cdot\partial_t L\bs w_{\eps\delta}(t)\\=
\frac1p k_\varepsilon(|L\bs w_{\eps\delta}(t)|^p-g^p(t))\partial_t\big(|L\bs w_{\eps\delta}(t)|^p-g^p(t)\big)+k_\varepsilon(|L\bs w_{\eps\delta}(t)|^p-g^p(t))g^{p-1}(t)\partial_tg(t)\\
=\frac1p\partial_t \big(\phi_\eps(|L\bs w_{\eps\delta}(t)|^p-g^p(t))+k_\varepsilon(|L\bs w_{\eps\delta}(t)|^p-g^p(t))g^{p-1}(t)\partial_tg(t).
\end{multline*}

Integrating \eqref{dtuepdelta} between $0$ and $T$, we obtain
\begin{multline}\label{dtuepdelta-2}
\int_{Q_T}|\partial_t\bs w_{\eps\delta}|^2+\int_\Omega A(T,L\bs w_{\eps\delta}(T))-\int_\Omega A(0,L\bs w_0)-\int_{Q_T}\big(\partial_t A\big)(L\bs w_{\eps\delta})\\
+\frac{\delta}{p}\int_\Omega| L \bs w_{\eps\delta}(T)|^p
-\frac{\delta}{p}\int_\Omega| L \bs w_0|^p
+\frac1p\int_\Omega \phi_\eps(|L\bs w_{\eps\delta}(T)|^p-g^p(T))\\
-\frac1p\int_\Omega \phi_\eps(|L\bs w_0|^p-g^p(0))
+\int_{Q_T}k_\varepsilon(|L\bs w_{\eps\delta}|^p-g^p)g^{p-1}\partial_tg
=\int_{Q_T}\big(\bs f-\bs b(\bs w_{\eps\delta})\big)\cdot\partial_t \bs w_{\eps\delta},
\end{multline}
since $A$ satisfy \eqref{apot}.
But 
$$\phi_\eps(|L\bs w_{\eps\delta}(T)|^p-g^p(T))\ge0,\quad \phi_\eps(|L\bs w_0|^p-g^p(0))=0\ \text{ because }|L\bs w_0|\le g(0),$$
and, using assumption \eqref{op:a:prop:b:bis}, H\"older and Young inequalities,
\begin{multline*}
\int_{Q_T}\big(\bs f-\bs b(\bs w_{\eps\delta})\big)\cdot\partial_t \bs w_{\eps\delta}\le\int_{Q_T}\big(|\bs f|+b^*|\bs w_{\eps\delta}|\big)|\partial_t\bs w_{\eps\delta}|\\
 \le C\big(\|\bs f\|_{L^2(Q_T)^m}^2+b^*\|\bs w_{\eps\delta}\|_{L^2(Q_T)^m}^2\big)+
\frac12\|\partial_t\bs w_{\eps\delta}\|_{L^2(Q_T)^m}^2
\end{multline*}
and \eqref{op:a:bis}, from \eqref{dtuepdelta-2}, we have
\begin{multline*}
\frac12\|\partial_t\bs w_{\varepsilon\delta}\|_{L^2(\Omega)^m}^2\le C\big(\|\bs f\|_{L^2(Q_T)^m}^2
+b^*\|\bs w_{\eps\delta}\|_{L^2(Q_T)^m}^2\big)\\
+2a^*\|L\bs w_0\|^p_{L^p(\Omega)^\ell}
+A_1|Q_T|+A_2\|L\bs w_{\eps\delta}\|^p_{L^p(Q_T)^\ell}
+\frac\delta{p}\|L\bs w_0\|^p_{L^p(\Omega)^\ell}\\
+\|k_\varepsilon(|L\bs w_{\eps\delta}|^p-g^p)\|_{L^1(Q_T)}\|g\|_{L^p(Q_T)}^{p-1}\|\partial_t g\|_{L^\infty(Q_T)}
\end{multline*}
and from \eqref{Est-u_epsilon}-\eqref{Est-k_eps} we obtain, with a constant $C'>0$ independent of $\eps$ and $\delta$,
\begin{equation} \label{Est-dtu_epsilon}
\|\partial_{t}\bs w_{\varepsilon\delta}\|_{L^{2}(Q_{T})^m}
\le C'\big(1+\|L\bs w_{\eps\delta}\|_{L^p(Q_T)\ell}^p\big).		
\end{equation}

Then, recalling that Assumption~\ref{gelfand} implies, by Aubin-Lions lemma the compactness of $\Y_p\hookrightarrow\HHH$, there exists a subsequence that we still denote by $\{\bs w_{\varepsilon\delta}\}_\eps$ such that, for every $t\in(0,T]$
\begin{align*} 
\bs w_{\eps\delta}&\underset{\eps\rightarrow0}{\longrightarrow}\bs w_\delta\quad\text{in}\quad L^2(Q_T)^m,\\
\nonumber L\bs w_{\eps\delta}&\underset{\eps\rightarrow0}{\lraup}L\bs w_\delta \quad\text{in}\quad L^p(Q_T)^\ell\text{ weak},\\
\nonumber 	 \partial_t\bs w_{\eps\delta}&\underset{\eps\rightarrow0}{\lraup}\partial_t\bs w_\delta\quad\text{in}\quad L^2(Q_T)^m\text{ weak}.
\end{align*}

Recalling Lemma~\ref{monot*} and observing that $k_\eps(|L\bs v|^p-g^p)=0$ if $\bs v\in\K_g$ we have, for any $t\in(0,T]$,
\begin{multline}\label{monotdtw}
\int_{Q_t}k_\eps(|L\bs w_{\eps\delta}|^p-g^p)|L\bs w_{\eps\delta}|^{p-2}L\bs w_{\eps\delta}\cdot L(\bs v-\bs w_{\eps\delta})\\
=\int_{Q_t}\Big(k_\eps(|L\bs w_{\eps\delta}|^p-g^p)|L\bs w_{\eps\delta}|^{p-2}L\bs w_{\eps\delta}
-k_\eps(|L\bs v|^p-g^p)|L\bs v|^{p-2}L\bs v\Big)\cdot L(\bs v-\bs w_{\eps\delta})\\
+\int_{Q_t}k_\eps(|L\bs v|^p-g^p)|L\bs v|^{p-2}L\bs v\cdot L(\bs v-\bs w_{\eps\delta})\le0,\ \forall\bs v\in\K_g
\end{multline}
and using $\bs v -\bs w_{\varepsilon\delta}$ as test function in \eqref{ApproxProblem},  integrating over $(0,t)$, by \eqref{monotdtw} and by the monotonicity of the operators $\bs a$, $\bs b$ and $\bs\xi\mapsto|L\bs\xi|^{p-2}L\bs\xi$, we obtain
\begin{multline*} 
\int_{Q_t}\partial_t\bs w_{\eps\delta}\cdot(\bs v -\bs w_{\eps\delta})
+\int_{Q_t}\bs a(L\bs v)\cdot L(\bs v -\bs w_{\eps\delta})+\int_{Q_t}\bs b(\bs v)\cdot (\bs v -\bs w_{\eps\delta})\\
+\delta\int_{Q_t} |L\bs v|^{p-2}L\bs v\cdot L(\bs v -\bs w_{\eps\delta})
\geq\int_{Q_t}\bs f\cdot(\bs v -\bs w_{\eps\delta})
\end{multline*}
and so, passing to the limit when $\varepsilon$ tends to zero, we get 
\begin{multline} \label{ivfraca}
\int_{Q_t}\partial_t\bs w_{\delta}\cdot(\bs v -\bs w_{\delta})
+\int_{Q_t}\bs a(L\bs v)\cdot L(\bs v -\bs w_{\delta})+\int_{Q_t}\bs b(\bs v)\cdot (\bs v -\bs w_{\delta})\\
+\delta\int_{Q_t} |L\bs v|^{p-2}L\bs v\cdot L(\bs v -\bs w_{\delta})
\geq\int_{Q_t}\bs f\cdot(\bs v -\bs w_{\delta}).
\end{multline}
Arguing as in Lemma~\ref{conv} we also prove that $\bs w_\delta\in\K_g$. 

The next step is to let $\delta\rightarrow0$. 
From \eqref{Est-dtu_epsilon} we have
\begin{equation*}
\|\partial_t \bs w_\delta\|_{L^2(Q_T)^m}
\leq\varliminf_{\eps\to0}\|\partial_t \bs w_{\eps\delta}\|_{L^2(Q_T)^m}
\leq C'\Big(1+\varliminf_{\eps\to0}\int_{Q_T}|L(\bs w_{\eps\delta}|^p\Big).
\end{equation*}

Using the sets defined in \eqref{ABC} we get
\begin{multline*}
\int_{Q_T}|L\bs w_{\eps\delta}|^p=\int_{A_{\eps\delta}}|L\bs w_{\eps\delta}|^p
+
\int_{B_{\eps\delta}}\big(|L\bs w_{\eps\delta}|^p -g^p+g^p\big)
+
\int_{C_{\eps\delta}}|L\bs w_{\eps\delta}|^p\\
\leq 2\|g\|_{L^p(Q_T)}^p
+|Q_T|
+\int_{B_{\eps\delta}}\eps\ k_\eps(|L\bs w_{\eps\delta}|^p -g^p)
+\frac{C_1}{e^\frac1{\eps^2}-1}\leq C,
\end{multline*}
because, for $s\in(0,\frac1\eps)$  $\frac s\eps\leq k_\eps(s)$ and, by \eqref{ajuda},
\begin{equation*}
\int_{C_{\eps\delta}}\big(e^\frac{1}{\eps^2}-1\big)|L\bs w_{\eps\delta}|^p
=\int_{C_{\eps\delta}}k_\eps(|L\bs w_{\eps\delta}|^p -g^p)|L\bs w_{\eps\delta}|^p \leq C_1,
\end{equation*}
so $\{\partial_t\bs w_\delta\}_\delta$ is also uniformly bounded in $L^2(Q_T)^m$.
Since $\bs w_\delta\in\K_g$, we have $|L\bs w_\delta|$ bounded in $L^\infty(Q_T)$ independently of $\delta$.
Then, for a subsequence, we have
\begin{align*}
&\bs w_\delta\underset{\delta\rightarrow0}{\lraup}\bs w\text{ in }H^1\big(0,T;L^2(Q_T)^m\big)\text{ weak},\\
&L\bs w_\delta\underset{\delta\rightarrow0}{\lraup}L\bs w\text{ in }L^\infty(Q_T)^\ell\text{ weak}-*,\\
&\bs w_\delta(t)\underset{\delta\rightarrow0}{\lraup}\bs w(t)\text{ in }L^2(\Omega)^m\text{ weak, for all } 0<t\leq T
\end{align*}
and we can pass to the limit, when $\delta\rightarrow0$, in \eqref{ivfraca}, writing
\begin{multline*}
\int_{Q_t}\partial_t\bs w_\delta\cdot\bs v
+\int_{Q_t}\bs a(L\bs v)\cdot L(\bs v -\bs w_\delta)
+\int_{Q_t}\bs b(\bs v)\cdot (\bs v -\bs w_\delta)\\
+\delta\int_{Q_t}|L\bs v|^{p-2}L\bs v\cdot L(\bs v-\bs w_\delta)
\geq\int_{Q_t}\bs f\cdot(\bs v -\bs w_\delta)
+\frac12\int_\Omega|\bs w_\delta(t)|^2-\frac12\int_\Omega|\bs w_0|^2.
\end{multline*}
Because $\bs w_\delta(t)\lraup\bs w(t)$ in $L^2(\Omega)^m$-weak yields 
$\displaystyle \varliminf_{\delta\to0}\int_\Omega|\bs w_\delta(t)|^2\ge\int_\Omega|\bs w(t)|^2$, for each $0<t\leq T$, we recover, in the limit, that $\bs w$ satisfies 
\begin{equation*} 
\int_{Q_t}\partial_t\bs w\cdot(\bs v -\bs w)
+\int_{Q_t}\bs a(L\bs v)\cdot L(\bs v -\bs w)+\int_{Q_t}\bs b(\bs v)\cdot (\bs v -\bs w)\geq\int_{Q_t}\bs f\cdot(\bs v -\bs w),\quad\forall\bs v\in\K_g.
\end{equation*}
Finally, as in the proof of Theorem~\ref{IQV0}, $\bs w$ also belongs to $\K_g$, we may apply Minty's lemma and conclude that it solves \eqref{iv}.

The uniqueness of solution is immediate since, if $\bs w_1$ and $\bs w_2$ are two solutions of \eqref{iv}, then
\begin{multline*} 
\int_{Q_t}\partial_t(\bs w_1-\bs w_2)\cdot(\bs w_1 -\bs w_2)
+\int_{Q_t}\big(\bs a(L\bs w_1)-\bs a(L\bs w_2)\big)\cdot L(\bs w_1 -\bs w_2)\\
+\int_{Q_t}\big(\bs b(\bs w_1)-\bs b(\bs w_2)\big)\cdot (\bs w_1 -\bs w_2)\le0
\end{multline*}
and, by monotonicity of $\bs a$ and $\bs b$, we get 
$$\int_\Omega|\bs w_1(t)-\bs w_2(t)|^2\le0\quad\text{ for all }t\in(0,T),$$
concluding that $\bs w_1=\bs w_2$.
\end{proof}

Next we prove the stability of the solutions of the variational inequality \eqref{iv} with respect to the given data. The results we obtain depend on the assumptions on $\bs a$, and we are able to give a result even in the very degenerate case $\bs a\equiv\bs0$ and $\bs b\equiv\bs0$.

\begin{proof}{\bf Theorem \ref{stability}}
Considering the threshold functions $g_1$ and $g_2$ satisfying \eqref{assumptions_iv}, let $\K_{g_1(t)}$ and $\K_{g_2(t)}$ be, respectively, the corresponding convex sets defined in \eqref{kapaIV}. 
For $\beta(t)=\|g_i(t)-g_j(t)\|_{L^\infty(\Omega)}$, for $i,j\in\{1,2\}$, $i\neq j$, and given $\bs w_i$ such that $\bs w_i(t)\in\K_{g_i(t)}$ for a.e. $t\in(0,T)$, we define the functions
\begin{equation*}
{\bs w_i}_j(t)=\frac{g_*\bs w_i(t)}{g_* + \beta(t)} \in\K_{g_j(t)}.
\end{equation*}
Choosing $C\geq\max\left\{\frac{1}{g_*}\|\bs w_i\|_{L^2(Q_T)^m},\frac{1}{g_*}\|L\bs w_i\|_{L^p(Q_T)^\ell}\right\}$ we observe that
\begin{equation}\label{uij}
\|\bs w_i(t)-{\bs w_i}_j(t)\|_{L^2(\Omega)^m}\leq C\beta(t)
\quad\text{and}\quad
\|L(\bs w_i(t)-{\bs w_i}_j(t))\|_{L^p(\Omega)^\ell}\leq C\beta(t).
\end{equation}

Considering, for $i,j\in\{1,2\}$, $i\neq j$, the solution $\bs w_i$ of the variational inequality \eqref{iv} associated to the constraint $g_i$, using ${\bs w_j}_i$ as test function, we have, for $t\in(0,T]$,
\begin{equation*}
\int_{Q_t}\partial_{t}\bs w_i\cdot(\bs w_i-{\bs w_j}_i)
+\int_{Q_t}\bs a(L\bs w_i)\cdot L(\bs w_i-{\bs w_j}_i)
+\int_{Q_t}\bs b(\bs w_i)\cdot (\bs w_i-{\bs w_j}_i)
\leq
\int_{Q_t}\bs f_i\cdot(\bs w_i-{\bs w_j}_i)
\end{equation*}
and so
\begin{multline*}
\int_{Q_t}\partial_{t}\bs w_i\cdot(\bs w_i-{\bs w_j})
+\int_{Q_t}\bs a(L\bs w_i)\cdot L(\bs w_i-{\bs w_j})
+\int_{Q_t}\bs b(\bs w_i)\cdot (\bs w_i-{\bs w_j})\\
\leq
\int_{Q_t}\bs f_i\cdot(\bs w_i-{\bs w_j})
+\int_{Q_t}\partial_{t}\bs w_i\cdot({\bs w_j}_i-\bs w_j)
+\int_{Q_t}\bs a(L\bs w_i)\cdot L({\bs w_j}_i-\bs w_j)\\
+\int_{Q_t}\bs b(\bs w_i)\cdot ({\bs w_j}_i-\bs w_j)
+\int_{Q_t}\bs f_i\cdot(\bs w_j-{\bs w_j}_i).
\end{multline*}
Adding the inequalities we obtained in the former expression to $(i,j)=(1,2)$ and $(i,j)=(2,1)$, denoting $\bs w=\bs w_1-\bs w_2$ we get
\begin{equation} \label{ContDep2}
\int_{Q_t}\partial_{t}\bs w\cdot\bs w
+\int_{Q_t}\big(\bs a(L\bs w_1)-\bs a(L\bs w_2)\big)\cdot L\bs w
+\int_{Q_t}\big(\bs b(\bs w_1)-\bs b(\bs w_2)\big)\cdot \bs w
\leq
\int_{Q_t}(\bs f_1-\bs f_2)\cdot\bs w
+ \Theta(t),
\end{equation}
where
\begin{multline*}
\Theta(t)
= \int_{Q_t}\partial_{t}\bs w_1\cdot({\bs w_2}_1-\bs w_2)
+\int_{Q_t}\partial_{t}\bs w_2\cdot({\bs w_1}_2-\bs w_1)
+\int_{Q_t}\bs a(L\bs w_1)\cdot L({\bs w_2}_1-\bs w_2)
\\
+\int_{Q_t}\bs a(L\bs w_2)\cdot L({\bs w_1}_2-\bs w_1)
+\int_{Q_t}\bs b(\bs w_1)\cdot ({\bs w_2}_1-\bs w_2)
+\int_{Q_t}\bs b(\bs w_2)\cdot ({\bs w_1}_2-\bs w_1)
\\
+\int_{Q_t}\bs f_1\cdot(\bs w_2-{\bs w_2}_1)
+\int_{Q_t}\bs f_2\cdot(\bs w_1-{\bs w_1}_2).
\end{multline*}

The estimates \eqref{Est-u_epsilon}, \eqref{Est-Lu_epsilon}, \eqref{Est-dtu_epsilon} and \eqref{uij} allow us to conclude that there exists a positive constant $C$ such that, for any $t\in(0,T)$,
\begin{equation} \label{DepCont3}
\Theta(t)\leq C\int_0^t\|g_1(\tau)-g_2(\tau)\|_{L^\infty(\Omega)}\,d\tau.
\end{equation}

From \eqref{ContDep2} and \eqref{DepCont3}, using the monotonicity of $\bs a$ and $\bs b$, we obtain
$$\frac{d\ }{dt}\|\bs w(t)\|_{L^2(\Omega)^m}^2\le\|\bs w(t)\|_{L^2(\Omega)^m}^2+\|\bs f_1(t)-\bs f_2(t)\|_{L^2(\Omega)^m}^2+2C{\|g_1(t)-g_2(t)\|}_{L^\infty(\Omega)}.$$
Applying the Gronwall inequality we obtain
\begin{multline*}\|\bs w_1(t)-\bs w_2(t)\|^2_{L^2(\Omega)^m}\le e^T\Big(\int_0^T\|\bs f_1(t)-\bs f_2(t)\|^2_{L^2(\Omega)^m}dt\\+\|\bs w_{1_0}-\bs w_{2_0}\|_{L^2(\Omega)^m}
+\int_0^T\|g_1(t)-g_2(t)\|_{L^\infty(\Omega)}\,dt\Big),
\end{multline*}
concluding \eqref{ContDep1}.

Consider now the case where $\bs a$ is strongly monotone. 
When $p\geq2$,  after integration, from \eqref{ContDep2} 
and using also \eqref{DepCont3}, we obtain
\begin{multline*} 
\frac12\|\bs w(t)\|_{L^2(\Omega)^m}^2
+a_*\int_0^t\|L\bs w(\tau)\|_{L^p(\Omega)^\ell}^p  d\tau 
\\
\leq
\int_0^t\|\bs f_1(\tau)-\bs f_2(\tau)\|_{L^{2}(\Omega)^m}\|\bs w(\tau)\|_{L^2(\Omega)^m}d\tau
+\frac12\|\bs w(0)\|_{L^2(\Omega)^m}^2
+C\|g_1-g_2\|_{L^1(0,T;L^\infty(\Omega))}
\end{multline*}
and so we conclude that there exists another positive constant $C$, depending on $T$, such that
\begin{multline}\label{p>2}
\|\bs w_1-\bs w_2\|_{L^\infty(0,T;L^2(\Omega)^ m)}^2
+\|\bs w_1-\bs w_2\|_{\V_p(Q_T)}^p    
\leq
C\big(\|\bs f_1-\bs f_2\|_{L^{2}(Q_T)^m}^{2}
\\+\|\bs w_{1_0}-\bs w_{2_0}\|_{L^2(\Omega)^m}^2
+\|g_1-g_2\|_{L^1(0,T;L^\infty(\Omega))}\big),
\end{multline}
obtaining \eqref{ContDep22} when $p\ge2$.

If $1<p<2$, using the strong monotonicity \eqref{a:monot:forte} of the operator $\bs a$, from \eqref{ContDep2} we get
\begin{equation*} 
\int_{Q_t}\partial_{t}\bs w\cdot\bs w
+a_*\int_{Q_t} |L\bs w|^2\big(|L\bs w_1|+|L\bs w_2|\big)^{p-2}
\leq
\int_{Q_t}(\bs f_1-\bs f_2)\cdot\bs w
+ \Theta(t)
\end{equation*}
and applying the reverse H\"older inequality we obtain
\begin{equation} \label{DepCont5}
\int_{Q_t}\partial_{t}\bs w\cdot\bs w
+a_*\Big(\int_{Q_t} |L\bs w|^p\Big)^\frac2p 
\Big(\int_{Q_t}\big(|L\bs w_1|+|L\bs w_2|\big)^p\Big)^{\frac{p-2}{2}}
\leq
\int_{Q_t}(\bs f_1-\bs f_2)\cdot\bs w
+ \Theta(t).
\end{equation}
Since $p<2$ and $|L\bs w_i|\leq  g^*$ a.e.\ in $Q_T$, $i=1,2$, we have 
\begin{equation*}
\Big(\int_{Q_t}\big(|L\bs w_1|+|L\bs w_2|\big)^p\Big)^\frac{p-2}{2}
\geq C,
\end{equation*}
where $C$ is a  positive constant depending only on $\|\bs f_i\|_{L^{p'}(Q_T)^m}$ and $\|\bs w_{i_0}\|_{L^2(\Omega)^m}$.
From \eqref{DepCont5} we obtain
\begin{equation*}
\frac12\|\bs w(t)\|^2_{L^2(\Omega)^m}
+ C a_*\Big(\int_{Q_t} |L\bs w|^p\Big)^\frac2p 
\leq
\int_0^t\|\bs f_1-\bs f_2\|_{L^{2}(\Omega)^m}\|\bs w\|_{L^{2}(\Omega)^m}
+\frac12\|\bs w(0)\|_{L^2(\Omega)^m}^2
+\int_0^t \Theta
\end{equation*}
and so, using  H\"older and Young inequalities, we get the following inequality
\begin{equation}\label{p<2}
\|\bs w\|^2_{L^\infty(0,T;L^2(\Omega)^m)}
+\|\bs w\|^2_{\V^p}
\leq
C\big(\|\bs f_1-\bs f_2\|^2_{L^{2}(Q_T)^m}
+\|\bs w_{1_0}-\bs w_{2_0}\|_{L^2(\Omega)^m}^2
+\|g_1-g_2\|_{L^1(0,T;L^\infty(\Omega))}\big).
\end{equation}
From \eqref{p>2} and \eqref{p<2}, the conclusion follows.
\end{proof}

\begin{proof}[Proof of Theorem \ref{IVweak}] Consider a sequence of solutions $\bs w_n$ given by Theorem \ref{IV0} for a sequence of $g_n\in W^{1,\infty}\big(0,T;L^\infty(\Omega)\big)$ such that 
$$g_n\underset{n}{\longrightarrow}g\quad\text{ in }\C\big([0,T];L^\infty(\Omega)\big).$$

First we show that $\{\bs w_n\}_n$ is relatively compact in $\C\big([0,T];L^2(\Omega)\big)$. For arbitrary $\eps>0$, there exists $\delta>0$, such that
$$\eps_n=\sup_{|\tau-s|<\delta}\|g_n(\tau)-g_n(s)\|_{L^\infty(\Omega)}<\eps$$
for all $n$ sufficiently large and all $\tau,s\in(0,T)$.  Setting $\rho_n=\displaystyle\frac{g_*}{g_*+\eps_n}$, then $\rho_n\bs w_n(s)\in\K_{g_n(\tau)}$ for all $\tau\in(s-\delta,s+\delta)$ and $\rho_n\bs w_n(s)$ can be choosen as test function in \eqref{iv_espaco} for $\bs w_n$ at $t=\tau$, obtaining
\begin{multline}\label{stau}
\int_\Omega\partial_t\bs w_n(\tau)\cdot(\rho_n\bs w_n(s)-\bs w_n(\tau))+\int_\Omega\bs a(\tau,\bs w_n(\tau))\cdot L(\rho_n\bs w_n(s)-\bs w_n(\tau))\\
+\int_\Omega\bs b(\tau,\bs w_n(\tau))\cdot (\rho_n\bs w_n(s)-\bs w_n(\tau))\ge\int_\Omega\bs f(\tau)\cdot(\rho_n\bs w_n(s)-\bs w_n(\tau)).
\end{multline}

Since $0<\rho_n\le1$ and the solutions $\bs w_n\in\K_{g_n}$ are uniformly bounded in $L^\infty\big(0,T;\V_p\cap L^2(\Omega)^m\big)$, from \eqref{stau} for fixed $s$, we can integrate in $\tau$ on $[s,t]$ obtaining
\begin{align*}
\frac12\int_\Omega|\bs w_n(t)-&\bs w_n(s)|^2=\frac12\int_s^t\frac{d\ }{d\tau}\int_\Omega|\bs w_n(\tau)-\bs w_n(s)|^2\\
&\le (\rho_n-1)\int_s^t\int_\Omega\partial_\tau\bs w_n(\tau)\cdot\bs w_n(s)+C_1|t-s|+C\int_s^t\|\bs f(\tau)\|_{L^2(\Omega)^m}\\
&\le (\rho_n-1)\int_\Omega(\bs w_n(t)-\bs w_n(s))\cdot\bs w_n(s)+C'|t-s|^\frac12(T^\frac12+\|\bs f\|_{L^2(Q_T)^m})\\
&\le C''(\eps+|t-s|^\frac12),\quad\text{ for all }|t-s|<\delta
\end{align*}
and all $n$ sufficiently large. Hence $\{\bs w_n\}_n$ is equicontinuous on $[0,T]$  with values in $L^2(\Omega)^m$. Therefore we can take for a subsequence
$$\bs w_n\underset{n}{\longrightarrow}\bs w\quad\text{ in }\C\big([0,T];L^2(\Omega)^m\big)\quad\text{and}\quad
\bs w_n\underset{n}{\lraup}\bs w\quad\text{ in }L^\infty\big(0,T;\bs V_p\big)\text{-weak} *,$$
for some $\bs w$ which is such that $\bs w\in\K_g\cap\V_p\cap\C\big([0,T];L^2(\Omega)^m\big)$ and $\bs w(0)=\bs w_0$. 

We conclude that $\bs w$ is a weak solution to \eqref{iqv} with $\K_g$, by using Minty's Lemma, taking $\bs v_n=\rho_n\bs v\in\K_{g_n},$ for arbitrary $\bs v\in\K_g$ in
\begin{multline*}
\int_0^T\langle\partial_t\bs v_n,\bs v_n-\bs w_n\rangle_p+\int_{Q_T}\bs a(L\bs v_n)\cdot L(\bs v_n-\bs w_n)\\
+\int_{Q_T}\bs b(\bs v_n)\cdot(\bs v_n-\bs w_n)\ge\int_{Q_T}\bs f\cdot(\bs v_n-\bs w_n)-\frac12\int_\Omega|\bs w_0-\bs v_n(0)|^2
\end{multline*}
and we observe that $\bs v_n\underset{n}{\longrightarrow}\bs v$ in $\Y_p$. By uniqueness, all the sequence $\bs w_n\underset{n}{\longrightarrow}\bs w$.
\end{proof}

\def\ocirc#1{\ifmmode\setbox0=\hbox{$#1$}\dimen0=\ht0 \advance\dimen0
by1pt\rlap{\hbox to\wd0{\hss\raise\dimen0
\hbox{\hskip.2em$\scriptscriptstyle\circ$}\hss}}#1\else {\accent"17 #1}\fi}

\end{document}